\documentclass[a4paper,abstracton]{scrartcl}

\usepackage[utf8]{inputenc}

\usepackage{amssymb} 
\usepackage{amsmath}  
\usepackage{amsthm}

\usepackage{todonotes}
\usepackage{hyperref}
\usepackage[ruled,vlined,linesnumbered]{algorithm2e}

\usepackage{graphicx}
\usepackage{subfigure}
\usepackage{verbatim,tabularx,enumitem,colonequals,psfrag}
\usepackage{cite}
\usepackage{authblk}
\usepackage[normalem]{ulem} 
\usepackage{framed}
\usepackage{rotating}

\usepackage{epstopdf}
\usepackage{tikz}
\usetikzlibrary{patterns}

\usepackage[scr=boondoxo]{mathalfa}

\newcommand{\cU}{\mathcal{U}}

\newcommand{\cJ}{\mathcal{J}}

\newcommand{\yl}{\mathscr{y}}
\newcommand{\yu}{\overline{\mathscr{y}}}
\newcommand{\ysl}{z}
\newcommand{\ysu}{\overline{z}}

\newcommand{\cX}{\mathscr{X}}
\newcommand{\coX}{\overline{\mathscr{X}}}
\newcommand{\cY}{\mathscr{Z}}
\newcommand{\cZ}{\overline{\mathscr{Z}}}

\newcommand{\Rset}{\mathbb{R}}

\newcommand{\cl}{\underline{c}}
\newcommand{\cu}{\overline{c}}
\newcommand{\cd}{d}

\newtheorem{theorem}{Theorem}
\newtheorem{lemma}[theorem]{Lemma}

\newtheorem{corollary}[theorem]{Corollary}
\newtheorem{proposition}[theorem]{Proposition}

\newtheorem{observation}[theorem]{Observation}

\newcommand{\BIGOP}[1]{\mathop{\mathchoice%
{\raise-0.22em\hbox{\huge $#1$}}%
{\raise-0.05em\hbox{\Large $#1$}}{\hbox{\large $#1$}}{#1}}}

\allowdisplaybreaks

\begin{document}

\title{On   Recoverable  and Two-Stage Robust Selection Problems with Budgeted Uncertainty}

\author[1]{Andr\'e Chassein}
\author[2]{Marc Goerigk}
\author[3]{Adam Kasperski}
\author[4]{Pawe{\l} Zieli\'nski}

\affil[1]{Fachbereich Mathematik, Technische Universit\"at Kaiserslautern, Germany}
\affil[2]{Department of Management Science, Lancaster University, United Kingdom}
\affil[3]{Faculty of Computer Science and Management, Wroc{\l}aw University of Technology, Poland}
\affil[4]{Faculty of Fundamental Problems of Technology, Wroc{\l}aw University of Technology, Poland}


\maketitle


\begin{abstract}
In this paper the problem of selecting $p$ out of $n$ available items is discussed, such that their total cost is minimized. We assume that costs are not known exactly, but stem from a set of possible outcomes.

Robust  recoverable and two-stage models of this selection problem are analyzed. In the two-stage problem, up to $p$ items is chosen in the first stage, and the solution is completed once the scenario becomes revealed in the second stage. In the recoverable problem, a set of $p$ items is selected in the first stage, and can be modified by exchanging up to $k$ items in the second stage, after a scenario reveals.

We assume that uncertain costs are modeled through bounded uncertainty sets, i.e., the interval uncertainty sets with an additional linear (budget) constraint, in their discrete and continuous variants. Polynomial algorithms for  recoverable and two-stage selection problems with continuous bounded uncertainty, and compact mixed integer formulations in the case of discrete bounded uncertainty are constructed.
\end{abstract}

\textbf{Keywords: } combinatorial optimization; robust optimization; selection problem; budgeted uncertainty; two-stage robustness; recoverable robustness

\section{Introduction}

In this paper we consider the following \textsc{Selection} problem.  We are given a set of $n$ items with cost~$c_i$ for each $i\in[n] := \{1,\dots,n\}$ and an integer $p\in [n]$. We seek a subset $X\subseteq [n]$  of $p$ items, $|X|=p$, whose total cost $\sum_{i\in X} c_i$ is minimum. It is easy to see that an optimal solution is composed of $p$ items of the smallest cost. It can be found in $O(n)$ time by using the well-known fact, that the $p$th smallest item can be found in $O(n)$ time (see, e.g.,~\cite{CO90}). \textsc{Selection} is a basic resource allocation problem~\cite{IK88}. It is also a special case of 0-1 knapsack, 0-1 assignment, single machine scheduling, and minimum matroid base problems (see~\cite{KKZ13} for an overview). It can be formulated as the following integer linear program:
\begin{equation}
\label{selproblem}
\begin{array}{lll}
\min\ & \sum_{i\in[n]} c_i x_i \\
\text{s.t. } & \sum_{i\in[n]} x_i = p \\
& x_i\in\{0,1\} & \forall i\in[n].
\end{array}
\end{equation}
 We will use $\Phi\subseteq \{0,1\}^n$ to denote the set of all feasible solutions to~(\ref{selproblem}). Given 
 $\pmb{x}\in \{0,1\}^n$, we also define $X_{\pmb{x}}=\{i\in [n]: x_i=1\}$, and $\overline{X}_{\pmb{x}}=[n]\setminus X_{\pmb{x}}$, i.e. $X_{\pmb{x}}$ is the item set induced by vector $\pmb{x}$ and $\overline{X}_{\pmb{x}}$ denotes its complement.

Consider the case when the item costs are uncertain. As  part of the input, we are given a scenario set $\mathcal{U}$, containing all possible vectors of the item costs, called \emph{scenarios}. Several methods of defining $\mathcal{U}$ have been proposed in the existing literature (see, e.g.,~\cite{BS03, BS04, K08, KY97,NO13}). 
Under \emph{discrete uncertainty} (see, e.g.,~\cite{KY97}), the scenario set contains $K$ distinct scenarios i.e. $\mathcal{U}^D=\{\pmb{c}_1,\dots,\pmb{c}_K\}$,
$\pmb{c}_i \in\Rset^n_{+}$. Under \emph{interval uncertainty}, the cost of each item $i\in [n]$ belongs to a closed interval $[\cl_i, \cu_i]$, where $\cd_i:=\cu_i - \cl_i \geq 0$ is the maximal deviation of the cost of~$i$ from its 
\emph{nominal value}~$\cl_i$. In the traditional interval uncertainty representation, $\mathcal{U}^I$ is the Cartesian product of all the intervals (see, e.g.,~\cite{KY97}). In this paper we will focus on the following two generalizations of  scenario set $\mathcal{U}^I$, which have been examined in~\cite{BS03, BS04, NO13}:
\begin{itemize}
\item \emph{Continuous budgeted uncertainty}:
\[ \cU^c = \{ (\cl_i+\delta_i)_{i\in [n]} : \delta_i \in [0, \cd_i], \sum_{i\in[n]} \delta_i \le \Gamma\} \subseteq 
\Rset^n_{+}\] 
\item \emph{Discrete budgeted uncertainty}:
\[ \cU^d = \{ (\cl_i+\delta_i)_{i\in [n]} : \delta_i \in \{0,\cd_i\}, |\{i\in[n] : \delta_i = \cd_i\}| \le \Gamma \} \subseteq \Rset^n_{+} \]
\end{itemize}
The fixed parameter $\Gamma\geq 0$ is called a \emph{budget} and it controls the amount of uncertainty which an adversary can allocate to the item costs. For a sufficiently large $\Gamma$, $\cU^c$ reduces to $\mathcal{U}^I$, and $\cU^d$ reduces to the extreme points of $\cU^I$.

In order to compute a solution, under a specified scenario set $\mathcal{U}$, we can follow a robust optimization approach. For general overviews on robust optimization, see, e.g., \cite{ABV09,GMT14,KY97,KZ16b,R10}. In a typical, single-stage robust model we seek a solution minimizing the total cost in a worst case. This leads to the 
following \emph{minmax} and \emph{minmax} regret problems:
$$\textsc{MinMax}:\; \min_{\pmb{x}\in \Phi}\max_{\pmb{c}\in \cU} \pmb{c}\pmb{x},$$
$$\textsc{MinMax-Regret}:\; \min_{\pmb{x}\in \Phi}\max_{\pmb{c}\in \cU} \max_{\pmb{y}\in \Phi} (\pmb{c}\pmb{x}-\pmb{c}\pmb{y}).$$
The minmax (regret) versions of the \textsc{Selection} problem have been discussed in the existing literature. For scenario set $\mathcal{U}^D$ both problems are NP-hard even for 
$K=2$ (the number of scenarios equals~2)~\cite{A01}. If $K$ is part of the input, then \textsc{MinMax} and \textsc{MinMax-Regret} are 
strongly NP-hard and
not approximable within any constant factor~\cite{KKZ13}. On the other hand, \textsc{MinMax} is approximable within $O(\log K/ \log\log K)$~\cite{D13} but \textsc{MinMax-Regret} is only known to be approximable within~$K$,
which is due to the results given in~\cite{ABV09}.
 The \textsc{MinMax} problem under scenario sets $\mathcal{U}^c$ and $\mathcal{U}^d$ is polynomially solvable, according to the results obtained in~\cite{BS03}. Also, \textsc{MinMax-Regret}, under scenario set $\mathcal{U}^I$, is polynomially solvable by the algorithms designed in~\cite{A01, C04}.

The problems which arises in practice often have a two-stage nature. Namely, a partial solution is computed in the first stage and completed in the second stage, or a complete solution is formed in the first stage and  modified  to some extent in the second stage. Typically, the costs in the first stage are precisely known, while the costs in the second stage are uncertain.  Before we formally define the two-stage models, let us introduce some additional notation:
\begin{itemize}
\item $\Phi_1 = \{ \pmb{x}\in\{0,1\}^n : \sum_{i\in[n]} x_i \le p \}$,
\item $\Phi_{\pmb{x}} = \{ \pmb{y}\in\{0,1\}^n : \sum_{i\in[n]} (x_i + y_i) = p, x_i+y_i\le 1, i\in [n] \}, 
\;\;\pmb{x}\in \Phi_1$,
\item $\Phi^k_{\pmb{x}} = \{ \pmb{y}\in\{0,1\}^n : \sum_{i\in [n]} y_i = p,\ \sum_{i\in [n]} x_iy_i \ge p-k \},
\;\; \pmb{x}\in \Phi, k\in [p]\cup\{0\}.$
\end{itemize}
If $\pmb{y}\in \Phi_{\pmb{x}}$, then $X_{\pmb{x}}\cap X_{\pmb{y}}=\emptyset$ and $|X_{\pmb{x}}\cup X_{\pmb{y}}|=p$. Hence $\Phi_{\pmb{x}}$ encodes all subsets of  the item set $[n]$, which added to $X_{\pmb{x}}$ form a complete solution of cardinality~$p$. Set $\Phi^k_{\pmb{x}}$ is called a \emph{recovery set},
$k$ is a given  \emph{recovery parameter}. If $\pmb{y}\in \Phi^k_{\pmb{x}}$, then $|X_{\pmb{x}}\setminus X_{\pmb{y}}|=|X_{\pmb{y}} \setminus X_{\pmb{x}}|\leq k$, so $\Phi^k_{\pmb{x}}$ encodes all solutions which can be obtained from $X_{\pmb{x}}$ by exchanging up to $k$ items. Let $\pmb{C}=(C_1,\dots, C_n)$ be a vector of the first stage item costs, which are assumed to be precisely known. Let scenario set $\cU$ contain all possible vectors of the uncertain second stage costs. Given $k\in [p]\cup\{0\}$, we study the following recoverable selection problem:
$$ \textsc{RREC}: \min_{\pmb{x}\in\Phi} \left( \pmb{C}\pmb{x}+ \max_{\pmb{c}\in\cU} \min_{\pmb{y}\in \Phi^k_{\pmb{x}}} \pmb{c}\pmb{y} \right).$$
In \textsc{RREC} a complete solution (exactly $p$ items) is chosen in the first stage. Then, after a scenario from $\cU$ reveals, one can exchange optimally up to $k$ items in the second stage. Notice that if $k=0$ and $C_i=0$ for each $i\in [n]$, then \textsc{RREC} becomes the \textsc{MinMax} problem.
The robust recoverable model for linear programming, together with some applications, was discussed in~\cite{LLMS09}. It has been also recently applied to the shortest path~\cite{B12}, spanning tree~\cite{HKZ16,HKZ16a}, knapsack~\cite{BKK11} and traveling salesman problems~\cite{CG15b}. The \textsc{RREC} problem under scenario sets $\cU^D$ and $\cU^I$ has been recently discussed in~\cite{KZ15b}. Under $\cU^D$ it turned out to be NP-hard for constant $K$, strongly NP-hard and not at all approximable when $K$ is  part of the input (this is true even if $k=1$). On the other hand, under scenario set $\cU^I$, a polynomial $O((p-k)n^2)$ time algorithm for \textsc{RREC} has been proposed in~\cite{KZ15b}. No results for scenario rests $\cU^c$ and $\cU^d$ have been known to date.

We also analyze the following robust two-stage selection problem:
$$\textsc{R2ST}:\; \min_{\pmb{x}\in\Phi_1} \left(\pmb{C}\pmb{x}+\max_{\pmb{c}\in\cU} \min_{\pmb{y}\in \Phi_{\pmb{x}}} \pmb{c}\pmb{y}\right),$$
In \textsc{R2ST} we seek a first stage solution, which may contain less than $p$ items. Then, after a scenario from $\cU$ reveals, this solution is completed optimally to $p$ items. The robust two-stage model was introduced in~\cite{KMU08} for the bipartite matching problem. The R2ST problem has been recently discussed in~\cite{KZ15b}. It is polynomially solvable under scenario set $\cU^I$. For scenario set $\cU^D$, the problem is strongly NP-hard and hard to approximate within $(1-\epsilon)\log n$ for any $\epsilon>0$,  but it has an $O(\log K + \log n)$ randomized approximation algorithm. No results for scenario sets $\cU^c$ and $\cU^d$ have been known to date.

Given a first stage solution $\pmb{x}\in \Phi$ (resp. $\pmb{x}\in \Phi_1$), we will also study the following \emph{adversarial problem}: 
$$\textsc{AREC (A2ST)}: \;\max_{\pmb{c}\in\cU} \min_{\pmb{y}\in \Phi^k_{\pmb{x}}(\Phi_{\pmb{x}})} \pmb{c}\pmb{y}.$$
If, additionally, scenario $\pmb{c}\in \cU$ is fixed, then we get the following \emph{incremental problem}:
$$\textsc{IREC (I2ST)}: \;\min_{\pmb{y}\in \Phi^k_{\pmb{x}}(\Phi_{\pmb{x}})} \pmb{c}\pmb{y}.$$
The adversarial and incremental versions of some network problems were discussed in~\cite{NO13, SAO09}. The incremental versions of the shortest path and the spanning tree problems are polynomially solvable~\cite{SAO09}, whereas the adversarial versions of these problems under scenario set $\cU^d$ are strongly NP-hard~\cite{NO13, FR99, LC93}.

\begin{table}[ht]
\centering
\caption{The known results for $\cU^D$ and $\cU^I$ obtained in~\cite{KZ15b} and new results for $\cU^c$ and $\cU^d$ shown in this paper.} \label{tab1}
\begin{small}
\begin{tabular}{l|lll|lllllll}
$\cU$	& \textsc{IREC} & \textsc{AREC} & \textsc{RREC} & \textsc{I2ST} & \textsc{A2ST} & \textsc{R2ST} \\ \hline
 & $O(n)$ & $O(Kn)$ & NP-hard for const. $K$; & $O(n)$ & $O(Kn)$ & NP-hard for const. $K$; \\
		 & & & str. NP-hard					&		&	&  str. NP-hard  \\
$\cU^D$  			       &	      &                & not at all appr. &	&	&	 appr. $O(\log K + \log n)$\\
			       &	     &			&	for unbounded $K$		&	&	&  not appr. $(1-\epsilon)\log n$ \\
			       &		&		&						&	&      & for unbounded $K$ \\ \hline 
$\cU^I$  & $O(n)$ & $O(n)$ & $O((p-k)n^2)$ & $O(n)$ & $O(n)$ & $O(n)$ \\ \hline \hline
$\cU^c$ & $O(n)$ & $O(n^2)$ & poly. sol. & $O(n)$ & $O(n^2)$ & poly. sol. \\ 
	&        & $O(n \log n) $ & compact MIP & & $O(n\log n)$ & compact MIP \\ \hline
$\cU^d$ & $O(n)$ & $O(n^3)$ & compact MIP & $O(n)$ & $O(n^2)$ & compact MIP\\	
\end{tabular}
\end{small}
\end{table}

\paragraph{New results.}
All new results  for scenario sets $\cU^c$ and $\cU^d$, obtained in this paper, are summarized in Table~\ref{tab1}.  In particular, we show that all the considered problems are polynomially solvable under scenario set $\cU^c$. The polynomial algorithms for \textsc{RREC} and \textsc{R2ST} under $\cU^c$ are based on solving a polynomial number of  linear programming subproblems. We also provide polynomial time combinatorial algorithms for \textsc{AREC} and \textsc{A2ST} under both $\cU^c$ and $\cU^d$. The complexity of \textsc{RREC} and \textsc{R2ST} under $\cU^d$ remains open. For these problems we  construct compact MIP formulations and propose approximation algorithms.

\section{Continuous Budgeted Uncertainty}

In this section we address  the \textsc{RREC} and \textsc{R2ST} problems under scenario set $\cU^c$. We will show that both problems can be solved in polynomial time.

\label{sec3}

\subsection{Recoverable Robust Selection}
\label{sec31}

\subsubsection{The incremental problem}

Given $\pmb{x}\in \Phi$ and $\pmb{c}\in \cU$, the incremental problem, \textsc{IREC}, can be formulated as the following linear program (notice that the constraints $y_i\in \{0,1\}$ can be relaxed):
\begin{equation}
\label{increc}
\begin{array}{lllll}
opt_1=\min\ & \displaystyle \sum_{i\in[n]} c_iy_i \\
\text{s.t. } & \displaystyle \sum_{i\in[n]} y_i = p \\
& \displaystyle \sum_{i\in[n]} x_i y_i \ge p-k \\
& y_i\in[0,1] & i\in [n]
\end{array}
\end{equation}
It is easy to see that the \textsc{IREC} problem can be solved  in $O(n)$ time. 
Indeed,
we first choose $p-k$ items of the smallest cost  from $X_{\pmb{x}}$ and then $k$ items of the smallest cost from the remaining items. We will now show some additional properties of~(\ref{increc}), which will be used extensively later. The dual to~(\ref{increc}) is
\begin{equation}
\label{incdual}
\begin{array}{lllll}
\max\ & \displaystyle p\alpha + (p-k)\beta - \sum_{i\in[n]} \gamma_i \\
\text{s.t. } & \displaystyle \alpha + x_i \beta \le  \gamma_i + c_i &  i\in[n] \\
& \beta \ge 0 \\
& \gamma_i \ge 0 & i\in [n]
\end{array}
\end{equation}

From now on, we will assume that $k>0$ (the case $k=0$ is trivial, since $\pmb{y}=\pmb{x}$ holds).
Let $b(\pmb{c})$ be the $p$th smallest item cost for the items in $[n]$ under $\pmb{c}$ (i.e. if $c_{\sigma(1)}\leq \dots \leq c_{\sigma(n)}$ is the ordered sequence of the item costs under $\pmb{c}$, then $b(\pmb{c})=c_{\sigma(p)}$). Similarly, let $b_1(\pmb{c})$ be the $(p-k)$th smallest item cost for the items in $X_{\pmb{x}}$ and $b_2(\pmb{c})$ be the $k$th smallest item cost for the items in $\overline{X}_{\pmb{x}}$ under $\pmb{c}$.
 The following proposition characterizes the optimal values of $\alpha$ and $\beta$ in~(\ref{incdual}), and is fundamental in the following analysis:

\begin{proposition}
\label{propkk}
	Given scenario $\pmb{c}\in \cU$, the following conditions hold:
	\begin{enumerate}
		\item if $b_1(\pmb{c})\leq b(\pmb{c})$, then $\alpha=b(\pmb{c})$ and $\beta=0$ are optimal in~(\ref{incdual}),
		\item if $b_1(\pmb{c})>b(\pmb{c})$, then $\alpha=b_2(\pmb{c})$ and $\beta=b_1(\pmb{c})-b_2(\pmb{c})$ are optimal in~(\ref{incdual}).
	\end{enumerate}
\end{proposition}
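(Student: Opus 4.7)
The plan is to verify the proposed $(\alpha,\beta)$ by pairing them with the natural complement $\gamma_i=\max\{0,\alpha+x_i\beta-c_i\}$ (which is forced since the dual maximizes $-\sum_i\gamma_i$ subject to $\gamma_i\ge 0$ and $\gamma_i\ge \alpha+x_i\beta-c_i$) and comparing the resulting dual objective with the value of an explicit primal feasible point, invoking strong LP duality. First I would describe the combinatorial structure of the incremental optimum: picking $p-k$ cheapest items from $X_{\pmb{x}}$ together with $k$ cheapest items from $\overline{X}_{\pmb{x}}$ yields a feasible solution of value $T_1+T_2$, where $T_1,T_2$ are the corresponding sums; moreover, just picking the $p$ cheapest items of $[n]$ is feasible precisely when at least $p-k$ of them lie in $X_{\pmb{x}}$, which by definition of $b_1$ and $b$ is equivalent to $b_1(\pmb{c})\le b(\pmb{c})$. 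An easy exchange argument (swapping an element in $X_{\pmb{x}}$ of cost $\ge b_1$ for one in $\overline{X}_{\pmb{x}}$ of cost $\le b_2$) shows that in Case~2 the value $T_1+T_2$ is in fact the primal optimum, whereas in Case~1 the primal optimum is the sum $S_p$ of the $p$ smallest costs.

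For Case~1, with $\beta=0$ and $\alpha=b(\pmb{c})$ the dual is clearly feasible and reduces to $p\,b - \sum_i\max\{0,b-c_i\}$. I would evaluate this sum by letting $m$ be the number of items with $c_i<b$; then $S_p=\sum_{i:c_i<b}c_i+(p-m)b$, hence $\sum_i\max\{0,b-c_i\}=mb-\sum_{i:c_i<b}c_i=pb-S_p$, and the dual value collapses to $S_p$, matching the primal optimum.

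For Case~2, I would first check dual feasibility, namely $\beta=b_1(\pmb{c})-b_2(\pmb{c})\ge 0$: since strictly fewer than $p-k$ items of $X_{\pmb{x}}$ have cost $\le b$, more than $k$ items of $\overline{X}_{\pmb{x}}$ have cost $\le b$, so $b_2\le b<b_1$. Then using the same identity as above within $X_{\pmb{x}}$ and within $\overline{X}_{\pmb{x}}$ separately, one obtains $\sum_{i\in X_{\pmb{x}}}\max\{0,b_1-c_i\}=(p-k)b_1-T_1$ and $\sum_{i\notin X_{\pmb{x}}}\max\{0,b_2-c_i\}=kb_2-T_2$. Plugging in gives
\[
p\,b_2+(p-k)(b_1-b_2)-\bigl((p-k)b_1-T_1\bigr)-\bigl(kb_2-T_2\bigr)=T_1+T_2,
\]
again equal to the primal optimum.

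Since in both cases the proposed dual solution is feasible and attains the optimal primal value, strong LP duality implies it is dual-optimal, proving the proposition. The only delicate point is making sure the identities above remain valid in the presence of ties in the item costs; the potential obstacle is that "$(p-k)$-th smallest" and "number of items strictly below the threshold" can differ, but the telescoping computation of $\sum_i\max\{0,\tau-c_i\}$ only uses the total number of items below $\tau$ and the sum over the $p-k$ (or $k$, or $p$) cheapest items, so it is robust to ties and no perturbation argument is needed.
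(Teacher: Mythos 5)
Your proof is, in structure, the paper's own: eliminate $\gamma_i$ via $\gamma_i=[\alpha+\beta x_i-c_i]_+$, exhibit the combinatorial primal optimum in each case, evaluate the dual objective at the proposed $(\alpha,\beta)$, and conclude from the coincidence of a feasible primal value with a feasible dual value (weak duality is all that is needed for this last step; invoking ``strong duality'' is an overstatement, but harmless). Your Case~2 --- including the check that $\beta=b_1(\pmb{c})-b_2(\pmb{c})\ge 0$, the two counting identities, and the telescoping of the dual objective to $T_1+T_2$ --- is correct and matches the paper's computation in different notation.

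The step that can fail is exactly the one your closing remark declares safe. In Case~1 you assert that $b_1(\pmb{c})\le b(\pmb{c})$ is \emph{equivalent} to being able to choose $p$ cheapest items with at least $p-k$ of them in $X_{\pmb{x}}$, hence that the primal optimum equals $S_p$; only one direction of that equivalence holds. Take $n=10$, $p=5$, $k=2$, costs $1,1,1,10,10$ on $X_{\pmb{x}}$ and $0,0,0,10,10$ on $\overline{X}_{\pmb{x}}$: then $b_1(\pmb{c})=b(\pmb{c})=1$, yet every $\pmb{y}\in\Phi^k_{\pmb{x}}$ costs at least $3$ while $S_p=2$, and the dual value at $(\alpha,\beta)=(b(\pmb{c}),0)$ is $2$, so this point is \emph{not} optimal (here the Case~2 candidate $(\alpha,\beta)=(b_2(\pmb{c}),b_1(\pmb{c})-b_2(\pmb{c}))=(0,1)$ attains the optimum $3$). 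So the delicate issue with ties is not the identity $\sum_i[\tau-c_i]_+=(\text{count below }\tau)\cdot\tau-(\text{sum below }\tau)$ --- you are right that this is robust --- but the identification of the Case~1 primal optimum when $b_1(\pmb{c})=b(\pmb{c})$ and the ties at the threshold are adversarial. To be fair, the paper's proof makes the identical unguarded claim that an optimal solution of~(\ref{increc}) of cost $\sum_{i\in[p]}c_{\sigma(i)}$ exists whenever $b_1(\pmb{c})\le b(\pmb{c})$, so your argument is no weaker than the published one; a clean repair is to restrict Case~1 to $b_1(\pmb{c})<b(\pmb{c})$ (or to a tie-breaking under which the $p$ cheapest items can be taken feasibly) and to show that at the boundary one of the two proposed dual points is still optimal.
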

\begin{proof}
By replacing $\gamma_i$ by $[\alpha+\beta x_i -c_i]_+$, the dual problem~(\ref{incdual}) can be represented as follows:
	\begin{equation}
\label{e1duala}
		 \max_{\alpha, \beta \geq 0} f(\alpha,\beta)= \max_{\alpha, \beta\geq 0} \left\{p\alpha+(p-k)\beta -\sum_{i\in [n]} [\alpha+\beta x_i-c_i]_+\right\},
\end{equation}
	where $[a]_+=\max\{0,a\}$.
	Let us sort the items in $[n]$ so that that $c_{\sigma(1)}\leq \dots \leq c_{\sigma(n)}$. Let us sort the items in $X_{\pmb{x}}$ so that $c_{\nu(1)}\leq \dots \leq c_{\nu(p)}$ and the items in $\overline{X}_{\pmb{x}}$ so that   $c_{\varsigma(1)}\leq \dots \leq c_{\varsigma(n-p)}$. We distinguish two cases. The first one:
	 $c_{\nu(p-k)}\leq c_{\sigma(p)}$ ($b_1(\pmb{c})\leq b(\pmb{c})$). Then it is possible to construct an optimal solution to~(\ref{increc}) with the cost equal to $\sum_{i\in [p]} c_{\sigma(i)}$. Namely, we choose $p-k$ items of the smallest costs from $X_{\pmb{x}}$ and $k$ items of the smallest cost from the remaining items. Fix $\alpha=c_{\sigma(p)}$ and $\beta=0$, which gives the case 1. By using~(\ref{e1duala}), we obtain $f(\alpha,\beta)=\sum_{i\in [p]} c_{\sigma(i)}=opt_1$ and the proposition follows from the weak duality theorem.  The second
	 case: $c_{\nu(p-k)}>c_{\sigma(p)}$ ($b_1(\pmb{c})>b(\pmb{c})$). The optimal solution to~(\ref{increc}) is then formed by the items $\nu(1),\dots,\nu(p-k)$ and $\varsigma(1),\dots,\varsigma(k)$.  Fix $\alpha=c_{\varsigma(k)}$ and $\beta=c_{\nu(p-k)}-c_{\varsigma(k)}$, which gives the case~2. By~(\ref{e1duala}), we have
	\begin{align*}
	f(\alpha,\beta) &= p\alpha +(p-k)\beta -\sum_{i\in X_{\pmb{x}}}[\alpha+\beta -c_i]_+-\sum_{i\in \overline{X}_{\pmb{x}}}[\alpha-c_i]_+ \\
	&= p c_{\varsigma(k)} +(p-k) (c_{\nu(p-k)}-c_{\varsigma(k)})-\sum_{i\in X_{\pmb{x}}} [c_{\nu(p-k)}-c_i]_+-\sum_{i\in \overline{X}_{\pmb{x}}}[c_{\varsigma(k)}-c_i]_+\\
	&= p c_{\varsigma(k)} +(p-k) (c_{\nu(p-k)}-c_{\varsigma(k)})-(p-k)c_{\nu(p-k)}+\sum_{i\in [p-k]} c_{\nu(i)}-kc_{\varsigma(k)}+\sum_{i\in [k]} c_{\varsigma(i)}\\
	&= \sum_{i\in [p-k]} c_{\nu(i)}+\sum_{i\in [k]} c_{\varsigma(i)}=opt_1 
	\end{align*}
and the proposition follows  from the weak duality theorem.
	\end{proof}

\subsubsection{The adversarial problem}
\label{secARECc}

Consider the adversarial problem~\textsc{AREC} for a given solution $\pmb{x}\in \Phi$. We will again assume that $k>0$. If $k=0$,  then all the budget $\Gamma$ is allocated to the items in $X_{\pmb{x}}$.
Scenario $\pmb{c}\in \cU^c$ which maximizes the objective value in this problem is called a \emph{worst scenario} for $\pmb{x}$ (worst scenario for short). We now give a characterization of a worst scenario.

\begin{proposition}
\label{prop2}
	There is a worst scenario $\pmb{c}=(\cl_i+\delta_i)_{i\in [n]}\in \cU^c$ such that
	\begin{enumerate}
		\item $b_1(\pmb{c})\leq b(\pmb{c})$ or
		\item $b_1(\pmb{c})$ or $b_2(\pmb{c})$ belongs to $\mathcal{D}=\{\cl_1,\dots,\cl_n,\cu_1,\dots,\cu_n\}$.
	\end{enumerate}
\end{proposition}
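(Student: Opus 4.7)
To prove the proposition I would start from an arbitrary worst scenario $\pmb{c}^*\in\cU^c$. If $b_1(\pmb{c}^*)\le b(\pmb{c}^*)$, condition~1 already holds. Otherwise $b_1(\pmb{c}^*)>b(\pmb{c}^*)$ and, by Proposition~\ref{propkk},
\[
opt_1(\pmb{c}^*)=\sum_{i\in L_1^*} c^*_i+\sum_{i\in L_2^*} c^*_i,
\]
where $L_1^*$ (resp.\ $L_2^*$) is the set of $p-k$ (resp.\ $k$) cheapest items of $X_{\pmb{x}}$ (resp.\ $\overline{X}_{\pmb{x}}$) under $\pmb{c}^*$; set $H_1^*=X_{\pmb{x}}\setminus L_1^*$ and $H_2^*=\overline{X}_{\pmb{x}}\setminus L_2^*$.

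Next I would consider the linear program
\[
\mathrm{(LP)}:\quad\max\ \sum_{i\in L_1^*\cup L_2^*} c_i\ \text{ s.t. }\pmb{c}\in\cU^c,\ c_i\le c_j\ \forall i\in L_1^*,\,j\in H_1^*,\ c_i\le c_j\ \forall i\in L_2^*,\,j\in H_2^*.
\]
The scenario $\pmb{c}^*$ is feasible for $\mathrm{(LP)}$ and achieves the objective value $opt_1(\pmb{c}^*)$, so the optimum $V^*$ of $\mathrm{(LP)}$ satisfies $V^*\ge opt_1(\pmb{c}^*)$. I would then slide $\pmb{c}^*$ along the optimal face of $\mathrm{(LP)}$ toward a vertex. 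If the case~2 condition $b_1>b$ breaks during the slide, then at the breaking point $b_1=b$, which already gives condition~1 of the proposition; moreover, at $b_1=b$ the set $L_1^*\cup L_2^*$ consists exactly of the $p$ cheapest items overall, so the $\mathrm{(LP)}$ objective coincides with $opt_1$ there, making that intermediate scenario itself a worst scenario. Otherwise, I reach a vertex $\pmb{c}^{**}$ of $\mathrm{(LP)}$ still in case~2, where $opt_1(\pmb{c}^{**})=V^*\ge opt_1(\pmb{c}^*)$, so $\pmb{c}^{**}$ is again a worst scenario.

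The core step is an LP-vertex counting argument at $\pmb{c}^{**}$. Let $G$ be the graph on $[n]$ whose edges are the tight ordering constraints at $\pmb{c}^{**}$; linear independence of the tight constraints makes $G$ a forest, and each connected component of $G$ has a common cost value. Because the ordering constraints only connect $L_1^*$ with $H_1^*$ inside $X_{\pmb{x}}$, and $L_2^*$ with $H_2^*$ inside $\overline{X}_{\pmb{x}}$, the component $C_u$ containing the item attaining $b_1$ and the component $C_v$ containing the item attaining $b_2$ are disjoint. Every component must be pinned either by a tight box-constraint $c_i=\cl_i$ or $c_i=\cu_i$, or jointly by the single budget constraint. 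Since the budget can remove at most one degree of freedom from the unpinned components, at least one of $C_u$ and $C_v$ must be box-pinned, which forces its common cost---and hence $b_1$ or $b_2$---to lie in $\mathcal{D}$.

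The principal obstacle will be the sliding step: one has to verify that the partition $L_1^*, L_2^*$ remains a cheapest one throughout (this is guaranteed by the enforced ordering constraints once ties are broken suitably), and to handle degenerate LP vertices (e.g.\ when different items share $\cl$- or $\cu$-values, so that a component might be pinned by several coincident box-constraints) by an arbitrarily small preliminary perturbation of the data that does not affect the adversarial value.
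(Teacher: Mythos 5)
Your argument is correct in substance, but it takes a genuinely different route from the paper. The paper's proof is a local budget-shifting argument: assuming $b_1(\pmb{c})>b(\pmb{c})$ and $b_1(\pmb{c}),b_2(\pmb{c})\notin\mathcal{D}$, it identifies the plateau sets $A=\{i\in X_{\pmb{x}}:c_i=b_1(\pmb{c})\}$ and $B=\{i\in \overline{X}_{\pmb{x}}:c_i=b_2(\pmb{c})\}$ and shows, via the counting inequality $|A|+k_1\ge p-k$, that an infinitesimal transfer of budget either from $A$ to an item with slack below the level $b_1$, or in one of the two directions between $A$ and $B$, never decreases the adversary's value; iterating until a level reaches $\mathcal{D}$ or the two levels merge gives the claim. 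You instead freeze the combinatorial structure of the incremental optimum, pass to a linear program over $\cU^c$ with ordering constraints, and read off the conclusion from a rank count at an optimal vertex: the tight equalities form a forest whose components lie entirely inside $X_{\pmb{x}}$ or entirely inside $\overline{X}_{\pmb{x}}$, and since the single budget constraint can absorb only one residual degree of freedom, at least one of the two components carrying $b_1$ and $b_2$ must be pinned at some $\cl_i$ or $\cu_i$. Your version buys a cleaner existence statement (attainment at a vertex replaces the paper's informal ``increase $\Delta$ until\dots'' iteration), and the degeneracy you worry about is actually harmless --- a component pinned by several coincident box constraints is still pinned, so no perturbation is needed. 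What it costs is the bookkeeping relating the LP objective to the true adversarial value $opt_1$: these agree only while $b_1>b$, and your justification at the crossing point (that $L_1^*\cup L_2^*$ is exactly the set of $p$ cheapest items when $b_1=b$) is not literally true in general, since an item of $H_2^*$ may still be cheaper than $b$. The correct justification is either that $opt_1$ and the LP objective are continuous and coincide on the open case-2 portion of the segment, hence also at its closure, or directly that $b_2\le b_1=b\le \min_{j\in H_1^*}c_j$ still forces the greedy incremental optimum to select $L_1^*\cup L_2^*$. With that small repair the argument is complete.
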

\begin{proof}
\begin{figure}[ht]
\centering
\includegraphics[height=5cm]{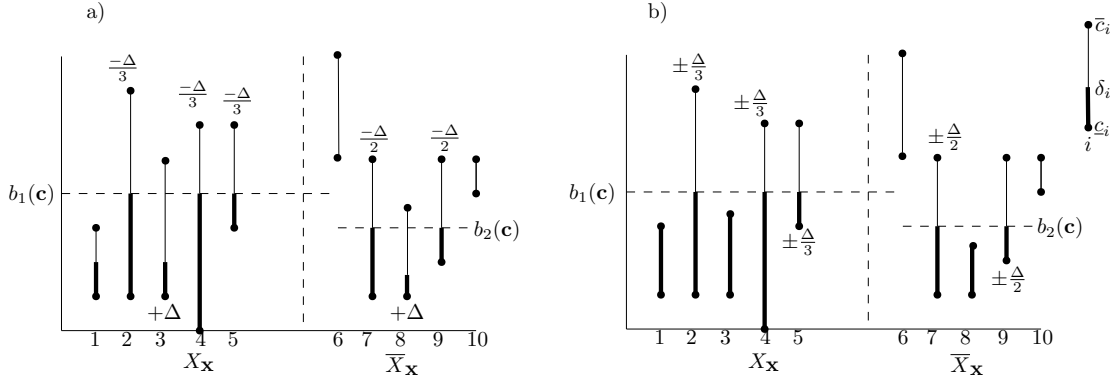}
\caption{Illustration of the proof for $n=10$,  $p=5$, $k=2$, and $X_{\pmb{x}}=\{1,\dots,5\}$.}\label{fig1}
\end{figure}
Assume that  $b_1(\pmb{c})>b(\pmb{c})$ and both $b_1(\pmb{c})$ and $b_2(\pmb{c})$ do not belong to $\mathcal{D}$.  
The main  idea of the proof is to show that there is a worst scenario
satisfying  condition~1 or~2.
Note that $b_1(\pmb{c})>b(\pmb{c})$ implies $b_1(\pmb{c})>b_2(\pmb{c})$.		  
Let $A=\{i\in X_{\pmb{x}}: \cl_i+\delta_i=b_1(\pmb{c})\}$ and $B=\{i\in \overline{X}_{\pmb{x}}: \cl_i+\delta_i=b_2(\pmb{c})\}$. Observe that $A,B\neq \emptyset$ by the definition of $b_1(\pmb{c})$ and $b_2(\pmb{c})$. Also, a positive budget must be allocated to each item in $A$ and $B$. In Figures~\ref{fig1}a and~\ref{fig1}b we have $A=\{2,4,5\}$ and $B=\{7, 9\}$.  Let $k_1$ be the number of items in $X_{\pmb{x}}$ such that $\cl_i+\delta_i<b_1(\pmb{c})$ and $k_2$ be the number of items in $\overline{X}_{\pmb{x}}$ such that $\cl_i+\delta_i<b_2(\pmb{c})$. In the sample problem (see Figures~\ref{fig1}a and~\ref{fig1}b) we have $k_1=2$ and $k_2=1$. Suppose that there is an item $j$ such that $\cl_j+\cd_j>b_1(\pmb{c})$ and $\cl_j+\delta_j<b_1(\pmb{c})$ (see the item~3 in Figure~\ref{fig1}a).  Let us transform scenario $\pmb{c}\in \cU^c$ into scenario $\pmb{c}_1\in \cU^c$ as follows: $\delta_j:=\delta_j+\Delta$ and $\delta_i:=\delta_i-\Delta/|A|$ for each $i\in A$, where $\Delta>0$ is a sufficiently small number (see Figure~\ref{fig1}a). Let $\pmb{y}$ be an optimal solution under $\pmb{c}$ and let $\pmb{y}_1$ be an optimal solution under $\pmb{c}_1$. The following equality holds
$$\pmb{c}_1\pmb{y}_1=\pmb{c}\pmb{y}+\Delta-(p-k-k_1)\frac{\Delta}{|A|}.$$
Since $|A|+k_1\geq p-k$, $\pmb{c}_1\pmb{y}_1\geq \pmb{c}\pmb{y}$ and $\pmb{c}_1$ is also a worst scenario. We can increase $\Delta$ until $b_1(\pmb{c}_1)\in \mathcal{D}$, or $b_1(\pmb{c}_1)=b_2(\pmb{c}_1)$ (which implies $b_1(\pmb{c}_1)=b(\pmb{c}_1)$), or $b_1(\pmb{c}_1)=\cl_j+\delta_j$. In the first two cases the proposition
follows and the third case will be analyzed later. The same reasoning can be applied to every item $j\in \overline{X}_{\pmb{x}}$ such that $\cl_j+\cd_j>b_2(\pmb{c})$ and $\cl_j+\delta_j<b_2(\pmb{c})$ (see the item~8 in Figure~\ref{fig1}a). So, it remains to analyze the case shown in Figure~\ref{fig1}b.  Let us again choose some sufficiently small $\Delta>0$. Define scenario $\pmb{c}_1$ by modifying $\pmb{c}$ in the following way $\delta_i:=\delta_i+\Delta/|A|$ for each $i\in A$ and $\delta_i:=\delta_i-\Delta/|B|$ for each  $i\in B$. Similarly, define scenario $\pmb{c}_2$ by modifying $\pmb{c}$ as follows $\delta_i:=\delta_i-\Delta/|A|$ for each $i\in A$ and $\delta_i:=\delta_i+\Delta/|B|$ for each  $i\in B$. Let $\pmb{y}_1$ be an optimal solution under $\pmb{c}_1$ and $\pmb{y}_2$ be an optimal solution under $\pmb{c}_2$. The following equalities hold
	 $$\pmb{c}_1\pmb{y}_1=\pmb{c}\pmb{y}+(p-k-k_1)\frac{\Delta}{|A|}-(k-k_2)\frac{\Delta}{|B|},$$
	 $$\pmb{c}_2\pmb{y}_2=\pmb{c}\pmb{y}-(p-k-k_1)\frac{\Delta}{|A|}+(k-k_2)\frac{\Delta}{|B|}.$$
Hence, either $\pmb{c}_1\pmb{y}_1\geq \pmb{c}\pmb{y}$ or $\pmb{c}_2\pmb{y}_2\geq \pmb{c}\pmb{y}$, so $\pmb{c}_1$ or $\pmb{c}_2$ is also a worst scenario.  
We can now increase $\Delta$ until $\pmb{c}_1$ ($\pmb{c}_2$) satisfies condition 1 or 2.
\end{proof}

Using~(\ref{incdual}) and the definition of scenario set $\cU^c$, we can represent \textsc{AREC} as the following linear programming problem:
\begin{equation}
\label{adv0}
\begin{array}{llll}
\max\ & p\alpha + (p-k)\beta - \sum_{i\in[n]} \gamma_i \\
\text{s.t. } & \alpha + x_i \beta \le \gamma_i+ \cl_i + \delta_i & \forall i\in[n] \\
& \sum_{i\in[n]} \delta_i \le \Gamma \\
& \delta_i \le d_i & \forall i\in[n] \\
&  \beta \ge 0 \\
& \gamma_i, \delta_i \ge 0 & i\in[n]
\end{array}
\end{equation}
Thus \textsc{AREC} can be solved in polynomial time. In the following we will construct
 a~strongly polynomial combinatorial algorithm for solving \textsc{AREC}. The following corollary is a consequence of Proposition~\ref{propkk} and Proposition~\ref{prop2}:

\begin{corollary}
\label{cor1}
	There is an optimal solution to~(\ref{adv0}) in which
	\begin{enumerate}
		\item $\beta=0$ or
		\item $\alpha$ or $\alpha+\beta$ belongs to $\mathcal{D}=\{\cl_1,\dots,\cl_n,\cu_1,\dots,\cu_n\}$.
	\end{enumerate}
\end{corollary}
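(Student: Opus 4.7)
The plan is to combine Proposition~\ref{propkk} and Proposition~\ref{prop2} directly by exhibiting an explicit optimal solution of~(\ref{adv0}) that inherits the structural properties of both. The key observation is that (\ref{adv0}) can be viewed as a joint maximization over the perturbation vector $\delta$ (which parametrizes $\pmb{c}\in\cU^c$) and the dual variables $(\alpha,\beta,\gamma)$ of~(\ref{increc}). For fixed $\delta$, optimizing over $(\alpha,\beta,\gamma)$ yields the dual value of the incremental problem under $\pmb{c}=(\cl_i+\delta_i)_{i\in[n]}$, which by strong LP duality equals $opt_1(\pmb{c})$. Hence the optimal value of~(\ref{adv0}) is precisely $\max_{\pmb{c}\in\cU^c} opt_1(\pmb{c})$, i.e., the value of \textsc{AREC}.

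First, I would invoke Proposition~\ref{prop2} to pick a worst scenario $\pmb{c}^*=(\cl_i+\delta_i^*)_{i\in[n]}\in\cU^c$ satisfying condition~1 or condition~2 of that proposition. Then I would set the $\delta$-part of~(\ref{adv0}) to $\delta^*$, and complete this to a feasible solution of~(\ref{adv0}) by choosing $(\alpha,\beta,\gamma)$ to be dual-optimal for the incremental problem under $\pmb{c}^*$, as described in Proposition~\ref{propkk}; the $\gamma_i$'s are then recovered as $\gamma_i=[\alpha+\beta x_i-(\cl_i+\delta_i^*)]_+$. By the discussion above this gives an optimal solution of~(\ref{adv0}).

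The last step is to read off the conclusion. If $\pmb{c}^*$ falls under case~1 of Proposition~\ref{prop2}, i.e.\ $b_1(\pmb{c}^*)\le b(\pmb{c}^*)$, then case~1 of Proposition~\ref{propkk} applies and we may take $\beta=0$, which is condition~1 of the corollary. Otherwise, $b_1(\pmb{c}^*)>b(\pmb{c}^*)$ and either $b_1(\pmb{c}^*)\in\mathcal{D}$ or $b_2(\pmb{c}^*)\in\mathcal{D}$; case~2 of Proposition~\ref{propkk} gives $\alpha=b_2(\pmb{c}^*)$ and $\alpha+\beta=b_1(\pmb{c}^*)$, so one of $\alpha,\alpha+\beta$ lies in $\mathcal{D}$, which is condition~2 of the corollary.

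I do not anticipate a serious technical obstacle here, since both propositions have already been established; the only mild subtlety is articulating the equivalence between the joint LP~(\ref{adv0}) and the bilevel formulation $\max_{\pmb{c}\in\cU^c} opt_1(\pmb{c})$ clearly enough that ``combining'' the two propositions is justified, rather than just asserted. Once that viewpoint is in place, the construction of the desired optimum is a one-line assembly from the two preceding results.
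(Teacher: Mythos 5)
Your proposal is correct and follows essentially the same route as the paper: select a worst scenario with the structure guaranteed by Proposition~\ref{prop2}, plug in the dual-optimal $(\alpha,\beta)$ from Proposition~\ref{propkk} for that scenario, and read off the two cases. The only difference is that you make explicit the LP-duality argument identifying~(\ref{adv0}) with $\max_{\pmb{c}\in\cU^c} opt_1(\pmb{c})$, which the paper leaves implicit; this is a harmless (indeed slightly cleaner) elaboration, not a different method.
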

\begin{proof}
According to Proposition~\ref{propkk}, there is an optimal solution to~(\ref{adv0}) which induces a worst scenario $\pmb{c}=(\cl_i+\delta_i)_{i\in [n]}\in \cU^c$, which satisfies conditions~1 and~2 of Proposition~\ref{propkk}.  If the condition~1 is fulfilled, i.e. $b_1(\pmb{c})\leq b(\pmb{c})$, then according to Proposition~\ref{prop2} we get $\beta=0$. If $b_1(\pmb{c})>b(\pmb{c})$, then condition~2 from Proposition~\ref{propkk} and condition~2 from Proposition~\ref{prop2}  hold. Both these conditions imply the condition~2 of the corollary.
\end{proof}

\begin{proposition}
\label{propbase}
	The optimal values of $\alpha$ and $\beta$ in~(\ref{adv0}) can be found by solving the following problem:
\begin{equation}
\label{mipaa2}
	\max_{\alpha, \beta\geq 0}\left\{ \alpha p + \beta (p-k)-\max\left \{\sum_{i\in [n]} [\alpha+\beta x_i-\cl_i]_+-\Gamma, \sum_{i \in [n]} [\alpha+\beta x_i -\cu_i]_+\right\}\right\}
\end{equation}
\end{proposition}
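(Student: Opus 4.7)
The plan is to eliminate the $\gamma$ and $\delta$ variables from~(\ref{adv0}) analytically for fixed $\alpha,\beta\ge 0$, reducing the problem to a closed-form maximization in $(\alpha,\beta)$ that matches~(\ref{mipaa2}).

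First, I would observe that for any fixed $\alpha, \beta$ and $\pmb{\delta}$, the variables $\gamma_i$ in~(\ref{adv0}) are separable: each $\gamma_i$ has coefficient $-1$ in the objective and appears in the single constraint $\gamma_i \ge \alpha+x_i\beta-\cl_i-\delta_i$ together with $\gamma_i\ge 0$. Hence the optimal choice is $\gamma_i^\star = [\alpha+x_i\beta-\cl_i-\delta_i]_+$. Writing $a_i := \alpha+x_i\beta-\cl_i$, this turns~(\ref{adv0}) into
$$\max_{\alpha,\beta\ge 0}\ \Bigl\{\alpha p + \beta(p-k) - \min_{\pmb{\delta}\in\Delta}\sum_{i\in[n]} [a_i - \delta_i]_+ \Bigr\},$$
where $\Delta=\{\pmb{\delta} : 0\le\delta_i\le \cd_i,\ \sum_i\delta_i\le\Gamma\}$. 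It remains to evaluate the inner minimum $m(\alpha,\beta)$ in closed form.

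The key observation for the inner problem is that raising $\delta_i$ by a small $\epsilon$ decreases $[a_i-\delta_i]_+$ by exactly $\epsilon$ as long as $\delta_i<a_i$, and leaves it unchanged once $\delta_i\ge a_i$. So the marginal benefit of allocating budget to item $i$ is $1$ per unit, up to the useful capacity $\min([a_i]_+,\cd_i)$, and $0$ beyond. Let $T:=\sum_{i\in[n]}\min([a_i]_+,\cd_i)$. If $\Gamma\ge T$, set $\delta_i^\star=\min([a_i]_+,\cd_i)$; this is feasible and a direct check of the three cases $a_i\le 0$, $0<a_i\le\cd_i$, $a_i>\cd_i$ shows $[a_i-\delta_i^\star]_+=[a_i-\cd_i]_+$, giving $m(\alpha,\beta)=\sum_i[a_i-\cd_i]_+$. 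If $\Gamma<T$, every feasible $\pmb{\delta}$ satisfies $\sum_i[a_i-\delta_i]_+\ge \sum_i([a_i]_+-\delta_i)\ge \sum_i[a_i]_+-\Gamma$, and this bound is attained by greedily spending the full $\Gamma$ on items with $a_i>0$ without ever exceeding $a_i$ or $\cd_i$ (possible since $T>\Gamma$), yielding $m(\alpha,\beta)=\sum_i[a_i]_+-\Gamma$.

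Combining the two cases and checking that in each regime the "other" expression is no larger than the active one, one obtains the unified formula
$$m(\alpha,\beta) = \max\Bigl\{\sum_{i\in[n]}[a_i]_+ - \Gamma,\ \sum_{i\in[n]}[a_i - \cd_i]_+\Bigr\}.$$
Substituting $a_i = \alpha+x_i\beta-\cl_i$ and $a_i-\cd_i = \alpha+x_i\beta-\cu_i$ into the outer maximization gives precisely~(\ref{mipaa2}). The only subtle point is the inner case analysis, in particular verifying that the greedy allocation attains the lower bound $\sum_i[a_i]_+ - \Gamma$ when $\Gamma<T$ and that it is valid to write the minimum as the maximum of the two expressions; once this is settled, the rest is routine algebra.
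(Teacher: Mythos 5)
Your proposal is correct and follows essentially the same route as the paper's proof: eliminate $\gamma_i$ by setting $\gamma_i=[\alpha+\beta x_i-\cl_i-\delta_i]_+$, then solve the inner budget-allocation subproblem greedily and express its value as the maximum of $\sum_i[\alpha+\beta x_i-\cl_i]_+-\Gamma$ and $\sum_i[\alpha+\beta x_i-\cu_i]_+$. Your explicit threshold $T=\sum_i\min([a_i]_+,\cd_i)$ is just a cleaner way of phrasing the paper's case split on whether the greedy procedure exhausts the budget, so there is nothing substantively different to compare.
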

\begin{proof}
	Let us first rewrite~(\ref{adv0}) in the following way:
\begin{equation}
	\label{mipaa1}
	\begin{array}{lllll}
		\max &\displaystyle p\alpha+(p-k)\beta -\sum_{i\in [n]} [\alpha+\beta x_i-\cl_i-\delta_i]_+\\
			\text{s.t. } & \displaystyle \sum_{i\in [n]} \delta_i\leq \Gamma & \\
			& 0\leq \delta_i\leq \cd_i & i\in [n]  \\
			&\beta\geq 0 \\
	\end{array}
\end{equation}
	Let us fix $\alpha$ and $\beta\geq 0$ in~(\ref{mipaa1}). Then the optimal values of $\delta_i$ can be then found by solving the following subproblem:
	\begin{equation}
	\label{subp0}
	\begin{array}{lllll}
		z=\min &\displaystyle \sum_{i\in [n]} [\alpha+\beta x_i-\cl_i-\delta_i]_+ \\
			\text{s.t. } & \displaystyle \sum_{i\in [n]} \delta_i\leq \Gamma \\
			& 0\leq \delta_i\leq \cd_i & i\in [n] \\
	\end{array}
\end{equation}
	Let $U=\sum_{i\in [n]} [\alpha+\beta x_i-\cl_i]_+$. Observe that $[U-\Gamma]_+$ is a lower bound on $z$ as $z\geq 0$ and it is not possible to decrease $U$ by more than $\Gamma$.
	The subproblem~(\ref{subp0}) can be solved by applying the following greedy method. For  $i:=1,\dots, n$, if $\alpha+\beta x_i-\cl_i>0$, we fix $\delta_i=\min\{\alpha+\beta x_i-\cl_i, \cd_i,\Gamma\}$ and modify $\Gamma:=\Gamma-\delta_i$. If, at some step, $\Gamma=0$ we have reached the  lower bound. Hence $z=[U-\Gamma]_+$. On the other hand if, after the algorithm terminates, we still have $\Gamma>0$, then $z=\sum_{i\in [n]}[\alpha +\beta x_i-\cl_i-\cd_i]_+$. In consequence
	\begin{align*}
	z &= \max\left\{[U-\Gamma]_+, \sum_{i\in [n]}[\alpha +\beta x_i-\cl_i-\cd_i]_+\right\} \\
	&= \max\left \{\sum_{i\in [n]} [\alpha+\beta x_i-\cl_i]_+-\Gamma, \sum_{i \in [n]} [\alpha+\beta x_i -\cu_i]_+\right\},
	\end{align*}
	which together with~(\ref{mipaa1}) completes the proof.
\end{proof}
Having the optimal values of $\alpha$ and $\beta$, the worst scenario $\pmb{c}=(\cl_i+\delta_i)_{i\in [n]}$, can be found in $O(n)$ time by  applying the greedy method to~(\ref{subp0}), described in the proof of Proposition~\ref{propbase}. We now construct an efficient algorithm for solving~(\ref{mipaa2}), which will give us the optimal values of $\alpha$ and $\beta$.  We will illustrate this algorithm by using the sample problem shown in Figure~\ref{fig2}.
 \begin{figure}[ht]
	\centering
	\includegraphics{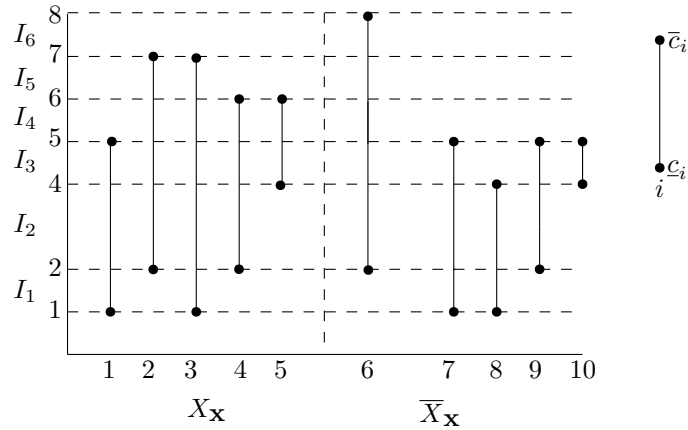}
	\caption{A sample problem with $n=10$, $p=5$, $k=2$, $\Gamma=24$, and $X_{\pmb{x}}=\{1,\dots,5\}$.}\label{fig2}
	\end{figure}

Let $h_{(1)}\leq h_{(2)}\leq \dots \leq h_{(l)}$ be the ordered sequence of the distinct values from $\mathcal{D}$. This sequence defines a family of  closed intervals $I_j=[h_{(j)}, h_{(j+1)}]$, $j\in [l-1]$, which partitions the interval $[\min_{i\in[n]}  \cl_i ,\max_{i\in[n]} \cu_i]$. Notice that $l\leq 2n$. In the example shown in Figure~\ref{fig2} we have six intervals $I_1,\dots I_6$ which split the interval $[1,8]$. 

By Corollary~\ref{cor1}, we need to investigate two cases.
In the first case, we have $\beta=0$. Then~(\ref{mipaa2}) reduces to the following problem:
\begin{equation}
\label{adva}
\max_{\alpha}f(\alpha)=\max_{\alpha} \left\{ \alpha p -\max\left \{\sum_{i\in [n]} [\alpha-\cl_i]_+-\Gamma, \sum_{i \in [n]} [\alpha-\cl_i -\cd_i]_+\right\}\right\}
\end{equation}
Consider the problem of maximizing $f(\alpha)$ over a fixed interval $I_j$. It is easy to verify that~(\ref{adva}) reduces then to finding the maximum of  a minimum of two linear functions of $\alpha$ over $I_j$. For example, when $\alpha \in I_3=[4,5]$, then after an easy computation, the problem~(\ref{adva}) becomes
$$\max_{\alpha \in [4,5]}\min\{-5\alpha+44,4\alpha+4\}.$$
It is well known that the maximum  value of $\alpha$ is attained at one of the bounds of the interval $I_j$ or at the intersection point of the two linear functions of $\alpha$. In this case we compute $\alpha$ by solving $-5\alpha+44=\alpha+4$ which yields $\alpha=4.44$.
We can now solve~(\ref{adva}) by solving at most $2n$ subproblems consisting in maximizing $f(\alpha)$ over $I_1,\dots I_l$. Notice, however, that in some cases we do not have to examine all the intervals $I_1,\dots, I_l$. We can use the fact that $\alpha$ is the $p$th smallest item cost in the computed scenario. In the example, the optimal value of $\alpha$ belongs to $I_1\cup I_2\cup I_3$. The function $f(\alpha)$ for the sample problem is shown in Figure~\ref{fig2}. The optimal value of $\alpha$ is 4.44. The scenario corresponding to $\alpha=4.44$ can be obtained by applying a greedy method and it is also shown in Figure~\ref{fig2x}.

\begin{figure}[ht]
	\centering
	\includegraphics{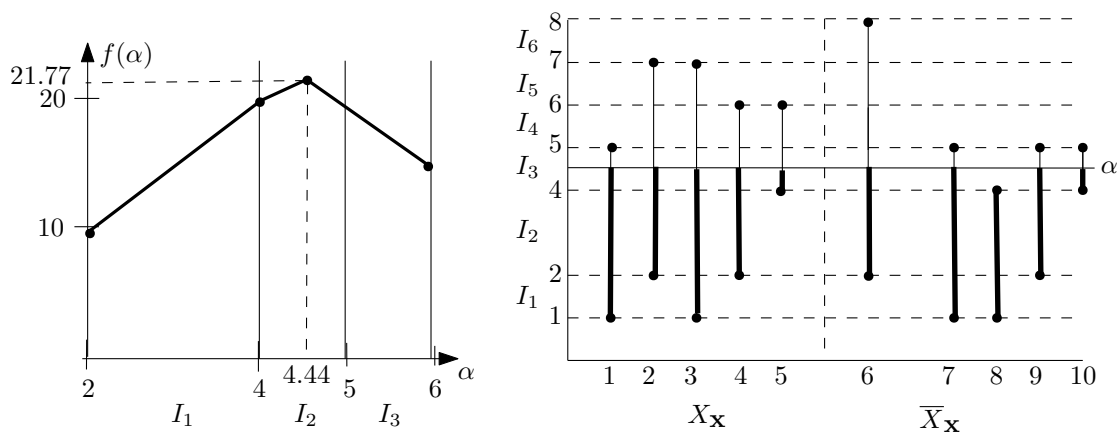}
	\caption{The function $f(\alpha)$ for the sample problem and the worst scenario for the optimal value of $\alpha=4.44$.}\label{fig2x}
	\end{figure}

We now discuss the second case in Corollary~\ref{cor1}. Let us fix $\gamma=\alpha+\beta$ and rewrite~(\ref{mipaa2}) as follows:
\begin{equation}
\label{mipaa3}
\begin{array}{lll}
	\displaystyle \max_{\alpha, \gamma\geq\alpha} g(\alpha, \gamma)=\max_{\alpha, \gamma\geq \alpha}
	\{  \alpha k + \gamma (p-k)-\\
	\displaystyle \left.\max \left\{\sum_{i\in X_{\pmb{x}}} [\gamma-\cl_i]_++ \sum_{i\in \overline{X}_{\pmb{x}}}[\alpha-\cl_i]_+-\Gamma, \sum_{i \in X_{\pmb{x}}} [\gamma -\cl_i -\cd_i]_++\sum_{i\in \overline{X}_{\pmb{x}}} [\alpha-\cl_i-\cd_i]_+\right\} \right\}.
\end{array}
\end{equation}
 According to Corollary~\ref{prop2}, the optimal value of $\alpha$ or $\gamma$ belongs to $\mathcal{D}$. So, let us first fix $\gamma\in \mathcal{D}$ and consider the problem $\max_{\alpha} g(\alpha,\gamma)$.  The optimal value of $\alpha$ can be found by optimizing $\alpha$ over each interval $I_j$, whose upper bound is not greater then $\gamma$ (it follows from the constraint $\alpha\leq \gamma$). Again, the problem $\max_{\alpha\in I_j} g(\alpha, \gamma)$ can be reduced to 
 maximizing a minimum of two linear functions of $\alpha$ over a closed interval. To see this consider the sample problem shown in Figure~\ref{fig2}. Fix $\gamma=6$ and assume that $\alpha\in I_2$. Then, 
a trivial verification shows that 
$$\max_{\alpha\in [2,4]} g(\alpha,6)=\max_{\alpha\in [2,4]}\min\{28-2\alpha,2\alpha+17\}.$$
The maximum is attained when $28-2\alpha=2\alpha+17$, so for $\alpha=2.75$. The function $g(\alpha,6)$ is shown in Figure~\ref{fig2y}. It attains the maximum in the interval $I_2$ at $\alpha=2.75$. The scenario which corresponds to $\alpha=2.75$ and $\gamma=6$ is also shown in Figure~\ref{fig2y}. In the same way we can find the optimal value of $\alpha$ for each fixed $\gamma\in\mathcal{D}$. Since $\gamma$ is the $(p-k)$th smallest item cost  in $X_{\pmb{x}}$ under the computed scenario, not all values of $\gamma$ in $\mathcal{D}$ need to be examined. In the example we have to only try $\gamma\in\{2,4,5,6\}$. 
\begin{figure}[ht]
	\centering
	\includegraphics{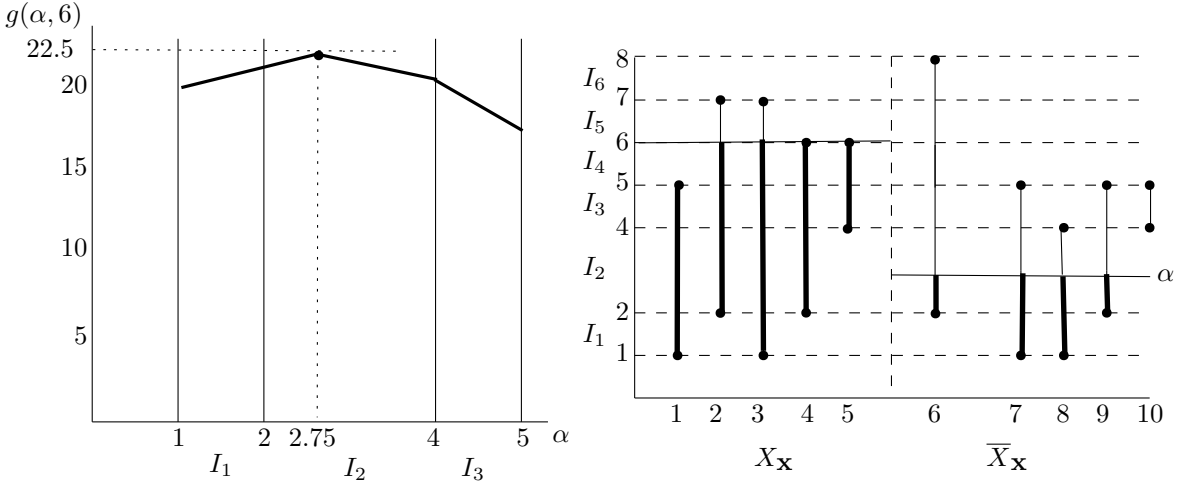}
	\caption{The function $g(\alpha,6)$ for the sample problem and the worst scenario for the optimal value of $\alpha=2.75$.}\label{fig2y}
	\end{figure}

We can then repeat the reasoning for every fixed $\alpha\in\mathcal{D}$. Namely, we solve the problem $\max_{\gamma\geq \alpha} g(\alpha,\gamma)$ by solving the problem for each interval whose lower bound is not less than $\alpha$. Again, not all values of $\alpha\in \mathcal{D}$ need to be examined. Since $\alpha$ is the $k$th smallest item cost in $\overline{X}_{\pmb{x}}$, we should check only the values of $\alpha\in\{1,2,4,5\}$.

\begin{theorem}
	The problem \textsc{AREC} under scenario set $\cU^c$ can be solved in $O(n^2)$ time.
\end{theorem}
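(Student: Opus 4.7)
The plan is to turn the case analysis of Corollary~\ref{cor1} into two efficient sweeps over the breakpoint set $\mathcal{D}$, return the larger of the objective values found, and then reconstruct a worst scenario from the optimal pair $(\alpha,\beta)$ via the greedy rule in the proof of Proposition~\ref{propbase}. First I would perform an $O(n\log n)$ preprocessing stage that sorts the values $\cl_i$ and $\cu_i$ separately for $X_{\pmb{x}}$ and for $\overline{X}_{\pmb{x}}$ and builds the partition $I_1,\dots,I_{l-1}$ of $[\min_i \cl_i, \max_i \cu_i]$ induced by $\mathcal{D}$, where $l \le 2n$.

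In the first case of Corollary~\ref{cor1} ($\beta = 0$) the adversarial objective collapses to $\max_\alpha f(\alpha)$ in~(\ref{adva}). On each interval $I_j$ every term $[\alpha - \cl_i]_+$ and $[\alpha - \cu_i]_+$ is linear in $\alpha$, so $f|_{I_j}$ equals $p\alpha$ minus the pointwise maximum of two affine functions and is therefore itself the minimum of two affine functions of $\alpha$. Its maximum on $I_j$ lies at an endpoint of $I_j$ or at the intersection of the two lines, which can be read off in $O(1)$ once the coefficients are known. A single left-to-right sweep over $I_1,\dots,I_{l-1}$ suffices, since each item contributes to the changing coefficients only when $\alpha$ crosses $\cl_i$ or $\cu_i$, giving $O(n)$ updates in total; thus this case costs $O(n)$ after the sort.

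In the second case I would enumerate the $O(n)$ candidate values of $\gamma = \alpha+\beta$ in $\mathcal{D}$ and, symmetrically, the $O(n)$ candidate values of $\alpha \in \mathcal{D}$. For each fixed $\gamma$, the same piecewise-linearity argument reduces $\max_\alpha g(\alpha,\gamma)$ from~(\ref{mipaa3}) to a sweep over the intervals $I_j$ with $h_{(j+1)} \le \gamma$, which by the same amortized bookkeeping runs in $O(n)$ time; the symmetric enumeration with $\alpha$ fixed is analogous. Iterating over all outer choices yields $O(n^2)$ for this case. By Corollary~\ref{cor1} the best value returned across all sweeps equals the optimum of~(\ref{adv0}), and a worst scenario is then produced in $O(n)$ from the optimal $(\alpha,\beta)$ by the greedy routine in the proof of Proposition~\ref{propbase}.

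The main obstacle will be the bookkeeping argument inside each sweep: I need to verify that on every $I_j$ the two functions whose minimum equals $f|_{I_j}$ (respectively $g(\cdot,\gamma)|_{I_j}$) are affine with coefficients maintained by running counts and partial sums that change by $O(1)$ in amortized sense per breakpoint, and that the endpoint behaviour and the switch between the two arguments of the outer maximum in~(\ref{mipaa2}) are handled correctly. Once that is in place the $O(n^2)$ bound is immediate from the case analysis above.
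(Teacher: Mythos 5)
Your proposal is correct and follows essentially the same route as the paper: both reduce the problem via Corollary~\ref{cor1} to maximizing a minimum of two affine functions over each interval $I_j$ for $O(n)$ choices of the fixed outer parameter, yielding $O(n^2)$ subproblems solvable in $O(1)$ each. The only difference is bookkeeping — the paper precomputes and stores all the relevant sums for every interval in $O(n^2)$ up front, whereas you maintain them incrementally as running counts and partial sums during each sweep; both give the same bound.
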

\begin{proof}
	We will present a sketch of the $O(n^2)$ algorithm. We first determine the family of intervals $I_1,\dots,I_l$, which requires $O(n\log n)$ time. Now, the key observation is that we can evaluate first all the sums that appear in~(\ref{adva}) and~(\ref{mipaa3}) for each interval $I_j$. 
We can compute and store these sums for every $I_j$ in $O(n^2)$ time. Now each problem $\max_{\alpha\in I_j} g(\alpha,\gamma)$ for  $\gamma \in \mathcal{D}$, $\max_{\gamma \in I_j}g(\alpha,\gamma)$ for $\alpha \in \mathcal{D}$, and $\max_{\alpha \in I_j} f(\alpha)$  can be solved in constant time by inserting the computed earlier sums into~(\ref{mipaa3}) and~(\ref{adva}). The number of problems that must be solved is $O(n^2)$, so the overall running time of the algorithm is $O(n^2)$.
\end{proof}

Using a more refined analysis and data structures such as min-heaps (see, e.g., \cite{CO90}), this $O(n^2)$ result can be further improved to $O(n\log n)$.
We present the proof in Appendix~\ref{sec:appendix}.

We now show the following proposition, which will be used later:
\begin{proposition}[\textbf{Dominance rule}]
\label{propdom}
	Let $k,l$ be two items such that $\cl_k\leq \cl_l$ and $\cu_k \leq \cu_l$. Let $x_l=1$ and $x_k=0$ in~(\ref{adv0}). Then the maximum objective value in~(\ref{adv0}) will not increase when we change $x_l=0$ and $x_k=1$.
\end{proposition}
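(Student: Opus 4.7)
My plan is to invoke the reformulation of Proposition~\ref{propbase} and argue pointwise in $(\alpha,\beta)$: for every fixed $\alpha\in\mathbb{R}$, $\beta\geq 0$, the objective inside the outer max of~(\ref{mipaa2}) is no smaller under the original configuration ($x_l=1$, $x_k=0$) than under the swapped configuration ($x_l=0$, $x_k=1$). Taking the $\max$ over $(\alpha,\beta)$ then delivers the dominance statement. Since only the coordinates $i\in\{k,l\}$ of $\pmb{x}$ are altered, the $(p-k)\beta$ term and the contributions of all other items to the two sums $S_1(\pmb{x};\alpha,\beta)=\sum_{i\in[n]}[\alpha+\beta x_i-\cl_i]_+$ and $S_2(\pmb{x};\alpha,\beta)=\sum_{i\in[n]}[\alpha+\beta x_i-\cu_i]_+$ are unchanged.

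The calculation reduces to comparing two pairs of terms. For the lower-bound sum, writing $\varphi(c)=[\alpha+\beta-c]_+-[\alpha-c]_+$, one finds
\[
S_1(\pmb{x}^{\text{new}})-S_1(\pmb{x}^{\text{orig}})=\varphi(\cl_k)-\varphi(\cl_l),
\]
and analogously for $S_2$ with $\cu_k,\cu_l$. The key lemma is that $\varphi$ is non-increasing in $c$; I would verify this by the elementary case split $c\leq\alpha$, $\alpha\leq c\leq\alpha+\beta$, $c\geq\alpha+\beta$, on which $\varphi$ takes the values $\beta$, $\alpha+\beta-c$, $0$ respectively. Combined with the hypotheses $\cl_k\leq\cl_l$ and $\cu_k\leq\cu_l$, this yields $S_1(\pmb{x}^{\text{new}})\geq S_1(\pmb{x}^{\text{orig}})$ and $S_2(\pmb{x}^{\text{new}})\geq S_2(\pmb{x}^{\text{orig}})$.

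From here the conclusion is immediate: by componentwise monotonicity of $\max$,
\[
\max\{S_1(\pmb{x}^{\text{new}})-\Gamma,\;S_2(\pmb{x}^{\text{new}})\}\;\geq\;\max\{S_1(\pmb{x}^{\text{orig}})-\Gamma,\;S_2(\pmb{x}^{\text{orig}})\},
\]
so subtracting this quantity from the $(\alpha,\beta)$-dependent part of~(\ref{mipaa2}) (which, by the previous paragraph's observation, is identical in both configurations) shows that the integrand is weakly larger at the original $\pmb{x}$. Taking the supremum over $(\alpha,\beta)$ with $\beta\geq 0$ preserves the inequality, which is exactly the claim.

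The only non-mechanical step is the monotonicity of $\varphi$, which is really a one-line check; everything else is bookkeeping and the appeal to Proposition~\ref{propbase}. I would expect the main subtlety to be stating cleanly that the $\max$-of-two-sums structure in~(\ref{mipaa2}) is preserved by the pointwise inequalities $S_1^{\text{orig}}\leq S_1^{\text{new}}$, $S_2^{\text{orig}}\leq S_2^{\text{new}}$; this is where one might be tempted to argue term-by-term within a single sum, but working at the level of the whole inner $\max$ makes the argument transparent.
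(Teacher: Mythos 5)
Your proposal is correct and follows essentially the same route as the paper: both arguments fix $(\alpha,\beta)$ (equivalently $(\alpha,\gamma)$ with $\gamma=\alpha+\beta$), observe that only the terms indexed by $k,l$ in the two inner sums change, and reduce the claim to the inequality $[\gamma-\cl_k]_+-[\gamma-\cl_l]_++[\alpha-\cl_l]_+-[\alpha-\cl_k]_+\geq 0$ (and its $\cu$ analogue), which is exactly your $\varphi(\cl_k)-\varphi(\cl_l)\geq 0$. The only cosmetic difference is that you package the paper's six-case verification as monotonicity of $\varphi(c)=[\alpha+\beta-c]_+-[\alpha-c]_+$, which is a slightly cleaner way to run the same check.
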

\begin{proof}
Let $X_{\pmb{x}'}=X_{\pmb{x}}\cup\{k\}\setminus \{l\}$. Notice that~(\ref{adva}) does not depend on the first stage solution $\pmb{x}$, so it remains to investigate the effect of replacing $X_{\pmb{x}}$ with $X_{\pmb{x}'}$ in~(\ref{mipaa3}).
It is enough to show that for each $\alpha$ and $\gamma\geq \alpha$ the following inequalities hold:
\begin{equation}
\label{f1}
U_1=[\gamma-\cl_k]_+-[\gamma-\cl_l]_++[\alpha-\cl_l]_+-[\alpha-\cl_k]_+\geq 0
\end{equation}
and
\begin{equation}
U_2=[\gamma-\cu_k]_+-[\gamma-\cu_l]_++[\alpha-\cu_l]_+-[\alpha-\cu_k]_+\geq 0
\end{equation}
Inequality~(\ref{f1}) can be proven by distinguishing the following cases:
	if $\alpha \leq \gamma\leq \cl_k \leq \cl_l$, then $U_1=0$;
	 if $\alpha\leq \cl_k\leq \gamma \leq \cl_l$, then $U_1=\gamma-\cl_k\geq 0$;
	 if $\alpha\leq \cl_k \leq \cl_l\leq \gamma$, then  $U_1=\cl_l-\cl_k\geq 0$;
	 if $\cl_k\leq \alpha\leq \gamma\leq \cl_l$,  then $U_1=\gamma-\alpha\geq 0$;
	 if $\cl_k\leq \alpha \leq \cl_l \leq \gamma$, then $U_1=\gamma-\cl_k-\gamma+\cl_l-\alpha+\cl_k=\cl_l-\alpha\geq 0$;
	 if $\cl_k\leq \cl_l\leq \alpha \leq \gamma$, then $U_1=0$.
The proof of the fact that $U_2\geq 0$ is just the same.
\end{proof}

\subsubsection{The recoverable robust problem}
\label{secRRCalg}

In this section we study \textsc{RREC} under scenario set $\cU^c$. We first identify some special cases of this problem, which are known to be polynomially solvable. If $\Gamma$ is sufficiently large, say $\Gamma\geq \sum_{i\in [n]} \cd_i$, then scenario set $\cU^c$ reduces to $\cU^I$ and the problem can be solved in $O((p-k)n^2)$ time~\cite{KZ15b}. Also the boundary cases $k=0$ and $k=p$ are polynomially solvable. When $k=p$, then we choose in the first stage $p$ items of the smallest costs under~$\pmb{C}$. The total cost of this solution can be then computed in $O(n^2)$ time by solving the corresponding adversarial problem. If $k=0$, then \textsc{RREC} is equivalent to the \textsc{MinMax} problem with cost intervals $[C_i+\cl_i, C_i+\cu_i]$, $i\in [n]$. Hence it is polynomially solvable due to the results obtained in~\cite{BS03}.

Consider now the general case with any $k\in [p]$. We first show a method of preprocessing a given instance of the problem.  Given two items $i,j\in [n]$, we write $i\preceq j$ if $C_i \leq C_j$, $ \cl_i\leq \cl_j$ and  $\cu_i\leq \cu_j$.
For any fixed item $l\in [n]$, suppose that $|\{k: k\preceq l\}|\geq p$. Let an optimal solution $\pmb{x}\in \Phi$ to \textsc{RREC} be given, in which $x_l=1$. There is an item $x_k$ such that $k\preceq l$ and $x_k=0$ in $\pmb{x}$. We form solution $\pmb{x}'$ by setting $x_k=1$ and $x_l=0$. From Proposition~\ref{propdom} and inequality $C_k\leq C_l$, we get
$$\pmb{C}\pmb{x}'+\max_{\pmb{c}\in \cU^c} \min_{\pmb{y}\in \Phi^k_{\pmb{x}'}} \pmb{c}\pmb{y}\leq \pmb{C}\pmb{x}+\max_{\pmb{c}\in \cU^c} \min_{\pmb{y}\in \Phi^k_{\pmb{x}}} \pmb{c}\pmb{y},$$
and $\pmb{x}$ is also an optimal solution to \textsc{RREC}. In what follows, we can remove $l$ from $[n]$ without violating the optimum obtaining (after renumbering the items) a smaller item set $[n-1]$. We can now repeat iteratively this reasoning, which allows us to reduce the size of the input instance.  Also, for each $l\in [n]$, if $|\{k:l\preceq k\}|\geq n-p$, then we do not violate the optimum after setting $x_l=1$.

We now reconsider the adversarial problem~(\ref{adv0}). Its dual is the following:
\begin{align*}
\min\ & \sum_{i\in[n]} \cl_i y_i + \Gamma \pi + \sum_{i\in[n]} \cd_i \rho_i \\
\text{s.t. } & \sum_{i\in[n]} y_i = p \\
& \sum_{i\in[n]} x_iy_i \ge p-k \\
& \pi + \rho_i \ge y_i  & i\in [n]\\
& y_i \in [0,1] & i\in [n]\\
& \pi \ge 0 \\
& \rho_i \ge 0 & i\in[n]
\end{align*}
Using this formulation, we can represent the \textsc{RREC} problem under scenario set $\cU^c$
as the following compact mixed-integer program:
\begin{equation}
\label{mipreccont}
\begin{array}{lllll}
\min\ & \displaystyle \sum_{i\in[n]} C_i x_i + \sum_{i\in[n]} \cl_i y_i + \Gamma \pi + \sum_{i\in[n]} \cd_i \rho_i \\
\text{s.t. } & \displaystyle \sum_{i\in[n]} y_i = p \\
& \displaystyle\sum_{i\in[n]} x_i = p \\
& \displaystyle \sum_{i\in[n]} x_iy_i \ge p-k \\
& \pi + \rho_i \ge y_i & i\in [n] \\
& x_i \in\{0,1\} & i\in [n] \\
& y_i \in [0,1] & i\in [n] \\
& \pi \ge 0 \\
& \rho_i \ge 0 & i\in[n]
\end{array}
\end{equation}
The products $x_i y_i$, $i\in [n]$, can be linearized by using standard methods, which leads to a linear MIP formulation for \textsc{RREC}. Before solving this model the preprocessing described earlier can be applied.
We now show that~(\ref{mipreccont}) can be solved in polynomial time.
Notice that we can assume $\pi\in[0,1]$ and $\rho_i = [y_i - \pi]_+$. Let us split each variable $y_i=\yl_i+\yu_i$, where $\yl_i\in [0,\pi]$ is the cheaper, and $\yu_i\in[0,1-\pi]$ is the more expensive part of $y_i$ (through the additional costs of $\rho_i$). The resulting formulation is then
\begin{equation}
\label{submip1}
\begin{array}{lllll}
\min\ &\displaystyle \sum_{i\in[n]} C_i x_i + \sum_{i\in[n]} \cl_i \yl_i + \sum_{i\in[n]} \cu_i \yu_i + \Gamma\pi \\
\text{s.t. } & \displaystyle\sum_{i\in[n]} (\yl_i + \yu_i) = p \\
& \displaystyle\sum_{i\in[n]} x_i = p \\
&\displaystyle \sum_{i\in[n]} x_i(\yl_i+\yu_i) \ge p-k \\
& x_i \in\{0,1\} & i\in[n]\\
& \yl_i \in [0,\pi] & i\in[n]\\
& \yu_i \in [0,1-\pi] & i\in[n]\\
& \pi \in [0,1]
\end{array}
\end{equation}
Observe that if $\yu_i>0$, then $\yl_i=\pi$ for each $i\in [n]$ in some optimal solution, as the whole cheaper part of each item is taken first.
Substituting $\ysl_i \pi$ into $\yl_i $ and  $\ysu_i (1-\pi)$ into $\yu_i$, we can write equivalently
\begin{equation}
\label{submip2}
\begin{array}{lllll}
\min\ & \displaystyle\sum_{i\in[n]} C_i x_i + \sum_{i\in[n]} \pi \cl_i \ysl_i + \sum_{i\in[n]} (1-\pi) \cu_i \ysu_i + \Gamma\pi \\
\text{s.t. } & \displaystyle\sum_{i\in[n]} (\pi \ysl_i + (1-\pi) \ysu_i) = p \\
& \displaystyle\sum_{i\in[n]} x_i = p \\
& \displaystyle\sum_{i\in[n]} x_i( \pi \ysl_i+ (1-\pi) \ysu_i) \ge p-k \\
& x_i \in\{0,1\} & i\in [n]\\
& \ysl_i, \ysu_i\in [0,1] & i\in [n]\\
& \pi \in [0,1]
\end{array}
\end{equation}
Again, in some optimal solution, if $\ysu_i>0$, then $\ysl_i=1$  (and if $\ysl_i<1$, then $\ysu_i=0$) for each $i\in [n]$. The following lemma characterizes the optimal solution to~(\ref{submip2}):
\begin{lemma}\label{lem8}
There is an optimal solution to~\eqref{submip2} which satisfies the following properties:
\begin{enumerate}
	\item at most one variable in $\{\ysl_1,\dots,\ysl_n, \ysu_1, \dots, \ysu_n\}$ is fractional,
	\item $\pi=\frac{q}{r}$ for $r\in [n+1]$, $q\in[n]\cup\{0\}$.
\end{enumerate}
\end{lemma}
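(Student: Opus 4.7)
The plan is to recast~(\ref{submip2}) as a genuine linear program via the substitutions $w_i := \pi \ysl_i \in [0, \pi]$ and $v_i := (1-\pi) \ysu_i \in [0, 1-\pi]$. Since $\cl_i \le \cu_i$ and the observation just before the lemma gives $v_i > 0 \Rightarrow w_i = \pi$ at an optimum, this yields the equivalent program
\begin{align*}
\min\ & \sum_{i\in[n]} C_i x_i + \sum_{i\in[n]} \cl_i w_i + \sum_{i\in[n]} \cu_i v_i + \Gamma \pi \\
\text{s.t. } & \sum_{i\in[n]} (w_i + v_i) = p, \quad \sum_{i\in[n]} x_i (w_i + v_i) \ge p - k, \\
& 0 \le w_i \le \pi, \ 0 \le v_i \le 1 - \pi, \ \pi \in [0,1],
\end{align*}
which for any fixed $\pmb{x}$ is linear in $(w, v, \pi) \in \R^{2n+1}$. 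Since a fractional $\ysl_i$ (resp.\ $\ysu_i$) corresponds exactly to $w_i \in (0, \pi)$ (resp.\ $v_i \in (0, 1 - \pi)$), controlling fractional $\ysl, \ysu$ variables reduces to controlling the free (non-bound) $w_i, v_i$ at a basic feasible solution.

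For part~1, I would count degrees of freedom at a BFS of this residual LP. The LP has $2n+1$ variables, so a BFS requires $2n+1$ linearly independent tight constraints; at most two come from the structural constraints, forcing at least $2n-1$ bound constraints to be tight and leaving at most two variables free. When the coverage inequality is slack only one variable is free, giving at most one fractional $\ysl, \ysu$ immediately. When the coverage is tight and $\pi$ is one of the two free variables, at most one of $(w, v)$ is free, again giving at most one fractional $\ysl, \ysu$. The only remaining case is a BFS with both free variables among $\{w_i, v_i\}$: here I would pivot by pushing $\pi$ along the unique direction consistent with both tight structural constraints (which involve only the sums $w_i + v_i$ and not $\pi$ directly). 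The cost along this direction is linear in $d\pi$; traversing it in a non-worsening sense until another bound becomes tight either turns one of the two fractional $\ysl, \ysu$ into an integer value or drives $\pi$ into $\{0, 1\}$. In this boundary case the residual LP collapses to a standard selection LP (with costs $\cu$ if $\pi = 0$, or $\cl$ if $\pi = 1$); should its BFS have two fractional variables, they must carry equal marginal costs (otherwise rounding one up and the other down would strictly improve), and a direct rounding, justified by Proposition~\ref{propdom}, yields an equivalent optimum with at most one fractional variable.

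For part~2, once at most one $\ysl, \ysu$ is fractional, classify each item by $t_i := w_i + v_i \in \{0, \pi, 1\}$, with at most one exception $b$ whose $t_b$ lies in $(0, \pi) \cup (\pi, 1)$. Let $n_2, n_3$ count the items with $t_i = \pi$ and $t_i = 1$, and $n_2^X, n_3^X$ their restrictions to $X_{\pmb{x}}$. The equality constraint becomes $n_2 \pi + n_3 + t_b = p$, and if the coverage is tight, also $n_2^X \pi + n_3^X + t_b\,\mathbb{1}[b \in X_{\pmb{x}}] = p - k$. Eliminating $t_b$ produces a linear equation in $\pi$ with integer coefficients of absolute value at most $n$, so $\pi = q/r$ with $r \in [n+1]$ and $q \in [n] \cup \{0\}$. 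In the slack cases one finds either $\pi$ free (so no fractional $\ysl, \ysu$ and $n_2 \pi + n_3 = p$ gives $\pi = (p-n_3)/n_2$) or $\pi \in \{0,1\}$, both of the required form. The hardest step is the pivoting argument in part~1, specifically handling the boundary values $\pi \in \{0,1\}$ cleanly while preserving optimality.
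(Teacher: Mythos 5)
Your overall route is the same as the paper's: fix the optimal $\pmb{x}^*$, pass to the residual linear program in $(\yl,\yu,\pi)$ (your $(w,v,\pi)$ is exactly formulation~\eqref{submip1} with $\pmb{x}$ fixed), bound the number of non-tight bound constraints at a basic optimal solution to get property~1, and then read $\pi$ off the tight structural constraints to get property~2. The dimension count itself is sound and matches the paper's basis/non-basis accounting, and your treatment of the boundary case $\pi\in\{0,1\}$ via integrality of the residual selection LP is the same device the paper uses (though citing Proposition~\ref{propdom} for the rounding is a misattribution --- that proposition swaps items of $\pmb{x}$, not LP variables; the correct justification is simply that the incremental selection LP has an integral optimal vertex, obtainable greedily).

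There are, however, two concrete soft spots. First, in part~2 you classify items by $t_i=w_i+v_i\in\{0,\pi,1\}$, which implicitly uses the property $v_i>0\Rightarrow w_i=\pi$. That property is guaranteed only for \emph{some} optimal solution; you have not shown it holds at the basic optimal solution produced by your part-1 argument, and indeed a vertex can have $w_i=0$, $v_i=1-\pi$, so the value $t_i=1-\pi$ is missing from your classification. The conclusion survives --- including a class $t_i=1-\pi$ still yields a linear equation in $\pi$ with integer coefficients of absolute value at most $n$, and one must also dispose of the subcase where the coefficient of $\pi$ vanishes after eliminating $t_b$ (then the other tight structural constraint, or the rank condition at the vertex, pins $\pi$) --- but as written the derivation is incomplete; the paper sidesteps this entirely by computing $\pi$ from $\pi\cY+(1-\pi)\cZ=p$ and, in the one-fractional case, by explicitly reoptimizing the two-variable subproblem in $(\pi,\ysl_j)$, which is what produces the denominator $\cY-\cZ+1$ and hence $r\in[n+1]$. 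Second, your pivoting step in part~1 is internally inconsistent: in the ``two free variables both among $\{w_i,v_i\}$'' case, $\pi$ is already pinned at a bound, so ``pushing $\pi$'' and ``driving $\pi$ into $\{0,1\}$'' do not describe a feasible move; you should simply observe that this case forces $\pi\in\{0,1\}$ and go directly to the selection-LP argument. Both gaps are patchable, but they need to be patched.
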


\begin{proof}
Let $\pmb{x}^*\in \Phi$ be optimal in~\eqref{submip2}. Fix $\pmb{x}=\pmb{x}^*$ in~\eqref{submip1} and consider the problem with additional slack variables: 
\begin{align*}
\min\ &\sum_{i\in[n]} \cl_i \yl_i + \sum_{i\in[n]} \cu_i \yu_i + \Gamma\pi\\
\text{s.t. } & \sum_{i\in[n]} (\yl_i + \yu_i) = p \\
& \sum_{i\in X_{\pmb{x}^*}} (\yl_i+\yu_i) - \delta = p-k \\
& \yl_i + \alpha_i = \pi & i\in[n]\\
& \yu_i + \beta_i = 1-\pi & i\in[n] \\
& \pi + \gamma = 1 \\
& \yl_i, \yu_i, \alpha_i, \beta_i  \ge 0 & i\in[n] \\
& \delta,\pi,\gamma \ge 0
\end{align*}
This problem contains $2n+3$ constraints and $4n+3$ variables. Thus, there is an optimal solution with $2n+3$ basis variables and $2n$ non-basis variables. The following cases are possible.

\begin{itemize}
\item First let us assume $\pi=0$. Then, $\yl_i = 0$ for all $i\in[n]$ and for the resulting problem
\begin{align*}
\min\ & \sum_{i\in[n]} \cu_i \yu_i \\
\text{s.t. } & \sum_{i\in[n]} \yu_i = p \\
& \sum_{i\in X_{\pmb{x}^*}} \yu_i \ge p-k \\
& \yu_i \in[0,1] & i\in[n]
\end{align*}
there exists an optimal integer solution $\pmb{\yu}$ (by first taking the $p-k$ cheapest items from $X_{\pmb{x}^*}$, and completing the solution with the $k$ cheapest items from $[n]$). Since $\pi=0$, we get $\pmb{z}=\pmb{1}$ and $\pmb{\overline{z}}=\pmb{\yu}$ and the claim is shown. The proof for $\pi=1$ is analogous.
\item Assume that $0 < \pi < 1$, so both $\pi$ and $\gamma$ are basis variables. If $\yl_i$ is a non-basis variable, then $\alpha_i$ is a basis-variable (and vice versa). The same holds for $\yu_i$ and $\beta_i$. The following cases result:
\begin{enumerate}
\item If $\delta$ is a basis variable, then it follows that the $2n$ non-basis variables are found in $\pmb{\yl}$, $\pmb{\yu}$, $\pmb{\alpha}$ and $\pmb{\beta}$. Hence, $\yl_i \in\{0,\pi\}$ and $\yu_i \in\{0,1-\pi\}$, so $z_i,\overline{z}_i\in\{0,1\}$ for each $i\in [n]$. Let $\cY=\sum_{i\in [n]} \ysl_i$ and $\cZ=\sum_{i\in [n]} \ysu_i$. 
We then get $\pi = (p-\cZ)/(\cY-\cZ)$ (see model~(\ref{submip2}))
 and point~2  of the lemma is proven (note that if $\cY=\cZ$, we can assume $\pi=1$).
\item Let us assume that $\delta$ is a non-basis variable. Then one of two cases must hold:
\begin{enumerate}
\item There is $j$ such that both $\yl_j$ and $\alpha_j$ are basis variables. Then, all other $\yl_i$ are either $0$ or $\pi$ ($\ysl_i\in \{0,1\}$), and all other $\yu_i$ are either $0$ or $1-\pi$ ($\ysu_i\in \{0,1\}$). In terms of formulation~\eqref{submip2}, we have $\ysl_j \in[0,1]$ and only $z_j$ can be fractional (i.e. other than 0 or 1). 
In order to show the second point of the lemma, let us define $\cY = \sum_{i\in[n]\setminus\{j\}} \ysl_i$, $\cZ= \sum_{i\in[n]} \ysu_i$, $\cY' = \sum_{i\in X_{\pmb{x}^*}\setminus\{j\}} \ysl_i$, and $\cZ' = \sum_{i\in X_{\pmb{x}^*}} \ysu_i$. Since $\ysl_i\geq \ysu_i$ for each $i\in [n]$ and $\ysu_j=0$, 
the inequalities $\cY\geq \cZ$ and $\cY'\geq \cZ'$ hold.
We consider now the subproblem of~(\ref{submip2}) that reoptimizes the solution only in $\pi$ and $\ysl_j$:
\begin{align*}
\min\ & \cl_j \pi \ysl_j + t \pi \\
\text{s.t. } & \pi \ysl_j = p - \cZ - (\cY-\cZ)\pi \\
& \pi \cY' + (1-\pi)\cZ' + \pi\ysl_j \ge p-k \\
& \ysl_j \in [0,1] \\
& \pi\in [0,1],
\end{align*}
where $t=\left( \Gamma + \sum_{i\in[n]\setminus\{j\}} \cl_i \ysl_i - \sum_{i\in[n]} \cu_i\ysu_i \right)$ is a constant.
We remove variable $\ysl_j$ using the equality $\ysl_j = (p - \cZ - (\cY-\cZ)\pi) / \pi$ from the problem. The constraint $\ysl_j \ge 0$ becomes $\pi \le (p-\cZ)/(\cY-\cZ)$, while the constraint $\ysl_j \le 1$ is $\pi \ge (p-\cZ)/(\cY-\cZ+1)$. Hence, the reoptimization problem becomes
\begin{align*}
\min\ & t \pi + \cl_j (p-\cZ-(\cY-\cZ)\pi)\\
\text{s.t. } & \pi((\cY'-\cZ')-(\cY-\cZ)) \ge \cZ-\cZ'-k  \\
& \frac{p - \cZ}{1+ \cY-\cZ} \le \pi \le \frac{p - \cZ}{ \cY-\cZ} \\
& \pi\in [0,1]
\end{align*}
We can conclude that the optimal value of $\pi$ is one of $\frac{p-\cZ}{\cY-\cZ}$, $\frac{p-\cZ}{\cY-\cZ+1}$, or $\frac{\cZ-\cZ'-k}{(\cY'-\cZ')-(\cY-\cZ)}$. Since $\cZ,\cZ',\cY, \cY'$ are all integers  from $0$ to $n$, the second point of the lemma is true.

\item There is $j$ such that both $\yu_j$ and $\beta_j$ are basis variables. This case is analogue to the previous case.
\end{enumerate}
\end{enumerate}
\end{itemize}

\end{proof}

\begin{lemma}\label{lemma8}
Problem \textsc{RREC} under $\cU^c$ can be solved by solving the problem
\begin{equation}
\label{lem9}
\begin{array}{lllll}
\min\ & \displaystyle\sum_{i\in\cJ} C_i x_i + \sum_{i\in\cJ} \pi \cl_i \ysl_i + \sum_{i\in\cJ}(1-\pi)\cu_i\ysu_i \\
\text{s.t. } & \displaystyle\sum_{i\in\cJ} x_i = p \\
&\displaystyle \sum_{i\in\cJ} \ysl_i  = \cY \\
& \displaystyle\sum_{i\in\cJ} \ysu_i  = \cZ \\
& \displaystyle\sum_{i\in\cJ} \ysl'_i \ge \cY' \\
& \displaystyle\sum_{i\in\cJ} \ysu'_i \ge \cZ' \\
& \ysl'_i \le x_i & i\in\cJ \\
& \ysl'_i \le \ysl_i & i\in\cJ \\
& \ysu'_i \le x_i & i\in\cJ \\
& \ysu'_i \le \ysu_i & i\in\cJ \\
& x_i \in \{0,1\} & i\in\cJ \\
& \ysl_i,  \ysu_i, \ysl'_i, \ysu'_i \in \{0,1\} & i\in\cJ
\end{array}
\end{equation}
for polynomially many sets $\cJ$ and values of  $\cY$, $\cZ$, $\cY'$, $\cZ'$ and $\pi$.
\end{lemma}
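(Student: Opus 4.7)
The plan is to reduce the mixed-integer program~(\ref{submip2}) for \textsc{RREC} under $\cU^c$ to polynomially many instances of~(\ref{lem9}) by exploiting the structure captured in Lemma~\ref{lem8}. First, I would enumerate $\pi$ over the $O(n^2)$ rational values $q/r$ with $r\in[n+1]$ and $q\in[n]\cup\{0\}$ guaranteed by Lemma~\ref{lem8} part~2. For each such $\pi$, I would then enumerate, by Lemma~\ref{lem8} part~1, the index $j\in[n]$ of the (at most one) fractional variable, whether this fractional variable is $\ysl_j$ or $\ysu_j$, and the value $x_j\in\{0,1\}$; this produces $O(n)$ configurations. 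Set $\cJ=[n]\setminus\{j\}$ so that on $\cJ$ every $\ysl_i$ and $\ysu_i$ is binary.

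Next, I would enumerate the integer summary statistics $\cY=\sum_{i\in\cJ}\ysl_i$, $\cZ=\sum_{i\in\cJ}\ysu_i$, $\cY'=\sum_{i\in\cJ}x_i\ysl_i$ and $\cZ'=\sum_{i\in\cJ}x_i\ysu_i$, each ranging over $\{0,1,\dots,n\}$, which gives $O(n^4)$ combinations. Once $\pi$, $\cY$, $\cZ$ and $x_j$ are fixed, the global equality $\sum_{i\in[n]}(\pi\ysl_i+(1-\pi)\ysu_i)=p$ pins down the (fractional) value of the distinguished variable at $j$, and the recoverability constraint $\sum_{i\in[n]}x_i(\pi\ysl_i+(1-\pi)\ysu_i)\ge p-k$ translates into the bounds on $\cY',\cZ'$ after subtracting item $j$'s known contribution. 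The residual problem over $\cJ$ becomes: minimize $\sum_{i\in\cJ}\bigl(C_i x_i+\pi\cl_i\ysl_i+(1-\pi)\cu_i\ysu_i\bigr)$ over binary $x_i,\ysl_i,\ysu_i$ with the prescribed sums $\sum x_i$, $\sum\ysl_i=\cY$, $\sum\ysu_i=\cZ$, $\sum x_i\ysl_i\ge\cY'$ and $\sum x_i\ysu_i\ge\cZ'$.

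To match~(\ref{lem9}) exactly, I would linearise the bilinear terms $x_i\ysl_i$ and $x_i\ysu_i$ through auxiliary variables $\ysl'_i\le x_i$, $\ysl'_i\le\ysl_i$ and $\ysu'_i\le x_i$, $\ysu'_i\le\ysu_i$. These relaxations are tight at optimum because $\ysl'_i,\ysu'_i$ do not occur in the objective and appear only in $\ge$-constraints, so they take their largest feasible values $\min\{x_i,\ysl_i\}=x_i\ysl_i$ and $\min\{x_i,\ysu_i\}=x_i\ysu_i$. This is exactly the formulation~(\ref{lem9}). Taking the minimum cost over all enumerated $(\pi,\cJ,\cY,\cZ,\cY',\cZ')$ (augmented by the constant contributed by item $j$) then recovers the optimal value of~(\ref{submip2}), and hence of \textsc{RREC}.

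The main obstacle I expect is the bookkeeping around the fractional index $j$: one must verify that the shifts in the right-hand sides $p$, $\cY$, $\cZ$, $\cY'$, $\cZ'$ coming from the known contribution of item $j$ take only polynomially many values, so the total count of~(\ref{lem9}) instances to be solved remains polynomial in $n$. This will follow because $\pi$, the choice of $j$, the type of fractional variable ($\ysl_j$ vs.\ $\ysu_j$) and $x_j$ are all explicitly enumerated, which pins down the fractional value (via the global equality constraint on $\pi$-weighted sums) and hence yields only polynomially many distinct parameterizations of~(\ref{lem9}).
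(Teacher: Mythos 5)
Your proposal is correct and follows essentially the same route as the paper: invoke Lemma~\ref{lem8} to enumerate polynomially many configurations of $\pi$, the (at most one) fractional index $j$ with its type and $x_j$, and the integer summary sums $\cY,\cZ,\cY',\cZ'$, then linearise the products $x_i\ysl_i$ and $x_i\ysu_i$ to arrive at~\eqref{lem9}. The only cosmetic difference is that in the all-integer case the paper derives $\pi=(p-\cZ)/(\cY-\cZ)$ from the guessed sums rather than enumerating $\pi$ directly, which does not change the argument.
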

\begin{proof}
Using Lemma~\ref{lem8}, we first consider the case when $\ysl_i,\ysu_i\in\{0,1\}$ for all $i\in[n]$. Then we set $\cJ = [n]$ and guess the values  $\cY=\sum_{i\in\cJ} \ysl_i$ and $\cZ = \sum_{i\in \cJ} \ysu_i$. We set $\pi = (p-\cZ)/(\cY-\cZ)$, and further guess all possible values of $\cX = \sum_{i\in \cJ} x_i \ysl_i$ and $\coX = \sum_{i\in \cJ} x_i \ysu_i$ for which the constraint $\pi \cX + (1-\pi)\coX \ge p-k$ is fulfilled. In total, we have to try polynomially many values. For each resulting problem we linearize $\ysl_i x_i$ and $\ysu_i x_i$ and we get~\eqref{lem9}.

Assume now that  some $\ysl_j \in[0,1]$ can be fractional (notice that in this case  we can fix $\ysu_j=0$).  We guess the index $j$, the value of $\pi$, and the value of $x_j$. We fix then  $\cJ = [n]\setminus\{j\}$ and continue as in the first part of the proof. Namely we guess $\cY$, $\cZ$,  and $\cX$, $\coX$ for the fixed $\pi$, and construct the problem~\eqref{lem9}.
Notice that the value of $z_j$ can be retrieved from $\pi z_j=(p - \cZ - (\cY-\cZ) \pi)$.
The case where $\ysu_j \in[0,1]$ can be fractional is analogue. Again, we have to try polynomially many values.
To solve problem~\eqref{submip2}, we then take the best of all solutions.
\end{proof}

\begin{lemma}\label{lemma9}
For fixed $\cJ$, $\cY$, $\cZ$, $\cY'$, $\cZ'$ and $\pi$, the problem~\eqref{lem9} can be solved in polynomial time.
\end{lemma}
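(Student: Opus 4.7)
The plan is to reduce~\eqref{lem9} to a polynomial number of transportation subproblems, one for each feasible profile of counts describing how the items split among the eight states induced by the triple $(x_i,\ysl_i,\ysu_i)\in\{0,1\}^3$.

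First I would observe that the auxiliary variables $\ysl'_i,\ysu'_i$ do not occur in the objective and only serve to encode the overlap constraints: since $\ysl'_i\le\min(x_i,\ysl_i)$ in every feasible solution, the constraint $\sum_{i\in\cJ}\ysl'_i\ge\cY'$ is satisfiable precisely when $\sum_{i\in\cJ}\min(x_i,\ysl_i)\ge\cY'$, and analogously for $\ysu'$. Therefore~\eqref{lem9} is equivalent to choosing three subsets $X,Y,\overline{Y}\subseteq\cJ$ of cardinalities $p$, $\cY$, $\cZ$ with $|X\cap Y|\ge\cY'$ and $|X\cap\overline{Y}|\ge\cZ'$ that minimise $\sum_{i\in X}C_i+\pi\sum_{i\in Y}\cl_i+(1-\pi)\sum_{i\in\overline{Y}}\cu_i$.

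Every item lies in exactly one of the eight states $(a,b,c)\in\{0,1\}^3$ encoding its membership in $X$, $Y$, $\overline{Y}$. Writing $n_{abc}$ for the count of items in state $(a,b,c)$, the global constraints together with $\sum_{abc}n_{abc}=|\cJ|$ form a linear system with four degrees of freedom, which I would parametrise by $\alpha=n_{110}+n_{111}=|X\cap Y|$, $\beta=n_{101}+n_{111}=|X\cap\overline{Y}|$, $\gamma=n_{111}=|X\cap Y\cap\overline{Y}|$, and $\delta=n_{011}=|Y\cap\overline{Y}\setminus X|$. All eight $n_{abc}$ are then explicit affine functions of $(\alpha,\beta,\gamma,\delta)$. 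Since each of these four integer parameters lies in $\{0,1,\dots,|\cJ|\}$, only $O(|\cJ|^4)$ profiles need to be enumerated, and for each one I would check feasibility by verifying $n_{abc}\ge 0$ for all $(a,b,c)$ together with $\alpha\ge\cY'$ and $\beta\ge\cZ'$.

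For a fixed feasible profile $(n_{abc})$, the residual task is to assign every item $i\in\cJ$ to exactly one state $(a,b,c)$ so that state $(a,b,c)$ receives precisely $n_{abc}$ items, minimising $\sum_i c_i(a_i,b_i,c_i)$ where $c_i(a,b,c)=aC_i+b\pi\cl_i+c(1-\pi)\cu_i$. This is a classical bipartite transportation problem with $|\cJ|$ unit-supply sources and eight sinks of integer demand; its constraint matrix is totally unimodular, so its LP relaxation admits an integer optimum computable in polynomial time. Taking the minimum of the resulting values over all $O(|\cJ|^4)$ enumerated profiles yields the optimum of~\eqref{lem9}. I expect the main obstacle to be purely bookkeeping, namely checking that the $(\alpha,\beta,\gamma,\delta)$ parametrisation indeed exhausts every feasible integer count profile, and that any integer optimum of the transportation subproblem lifts to a feasible solution of~\eqref{lem9} of the same cost.
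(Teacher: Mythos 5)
Your proof is correct, but it takes a genuinely different route from the paper. The paper establishes polynomial solvability in one shot: it shows that the coefficient matrix of the LP relaxation of~\eqref{lem9} (rows for the five cardinality constraints plus the coupling constraints $\ysl'_i\le x_i$, $\ysl'_i\le\ysl_i$, $\ysu'_i\le x_i$, $\ysu'_i\le\ysu_i$) is totally unimodular, verified by exhibiting an explicit Ghouila-Houri coloring for every row subset; a single LP solve then yields an integer optimum. You instead eliminate the auxiliary variables $\ysl'_i,\ysu'_i$ (correctly, since they are objective-free and the intersection constraints reduce to $|X\cap Y|\ge\cY'$ and $|X\cap\overline{Y}|\ge\cZ'$), enumerate the $O(|\cJ|^4)$ feasible count profiles over the eight membership states, and solve a unit-supply transportation problem for each profile. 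Both arguments are sound; the trade-off is that the paper's proof requires a bespoke and somewhat delicate case analysis for the coloring but yields a single polynomial-size LP, whereas yours leans only on the classical total unimodularity of transportation matrices (and hence needs no ad hoc verification, and even gives a combinatorial min-cost-flow subroutine) at the price of an extra $O(n^4)$ enumeration factor on top of the already polynomial guessing in Lemma~\ref{lemma8}. Your two flagged bookkeeping points are both fine: the affine map $(\alpha,\beta,\gamma,\delta)\mapsto(n_{abc})$ you describe is a bijection onto the solution set of the four cardinality equations, and any integer transportation optimum lifts to a feasible solution of~\eqref{lem9} of equal cost by setting $\ysl'_i=\min(x_i,\ysl_i)$ and $\ysu'_i=\min(x_i,\ysu_i)$.
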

\begin{proof}
We will show that the coefficient matrix of the relaxation of \eqref{lem9} is totally unimodular. We will use the following Ghouila-Houri criterion~\cite{C63}: An $m\times n$ integral matrix is totally unimodular, if and only if for each set of rows $R=\{r_1,\ldots,r_K\}\subseteq[m]$ there exists a coloring (called a valid coloring) $l(r_i) \in\{-1,1\}$ such that the weighted sum of every column restricted to $R$ is $-1$, $0$, or $1$. 
For simplicity, we assume w.l.o.g. that $\cJ=[n]$. Note that it is enough to show that the coefficient matrix of~(\ref{lem9}) without the relaxed constraints $x_i, \ysl_i,  \ysu_i, \ysl'_i, \ysu'_i\leq 1$ is totally unimodular. The matrix, together with a labeling of its rows, is shown in Table~\ref{coefflem9}.
\begin{table}[ht]
\caption{The coefficient matrix of~\eqref{lem9}}\label{coefflem9}
\resizebox{\textwidth}{!}{
\begin{tabular}{c| | c c c c  | c c c c  | c c c c  | c c c c  | c c c c }
 & $x_1$ & $x_2$ &   $\cdots$ & $x_n$ & $\ysl_1$ & $\ysl_2$ &  $\cdots$ & $\ysl_n$ & $\ysu_1$ & $\ysu_2$ &   $\cdots$ & $\ysu_n$ & $\ysl'_1$ & $\ysl'_2$ & $\cdots$ & $\ysl'_n$ & $\ysu'_1$ & $\ysu'_2$  & $\cdots$ & $\ysu'_n$ \\
 
\hline
$a_1$ & 1 & 1 &  $\cdots$ & 1 & 0 & 0 &  $\cdots$ & 0 & 0 & 0 &  $\cdots$ & 0 & 0 & 0 &  $\cdots$ & 0 & 0 & 0 & $\cdots$ & 0 \\
$a_2$ & 0 & 0 &  $\cdots$ & 0 & 1 & 1 &  $\cdots$ & 1 & 0 & 0 &  $\cdots$ & 0 & 0 & 0 &  $\cdots$ & 0 & 0 & 0 &  $\cdots$ & 0 \\
$a_3$ & 0 & 0 &  $\cdots$ & 0 & 0 & 0 &  $\cdots$ & 0 & 1 & 1 &  $\cdots$ & 1 & 0 & 0 & $\cdots$ & 0 & 0 & 0 &  $\cdots$ & 0 \\
$a_4$ & 0 & 0 &  $\cdots$ & 0 & 0 & 0 &  $\cdots$ & 0 & 0 & 0 &  $\cdots$ & 0 & 1 & 1 &  $\cdots$ & 1 & 0 & 0 &  $\cdots$ & 0 \\
$a_5$ & 0 & 0 &  $\cdots$ & 0 & 0 & 0 &  $\cdots$ & 0 & 0 & 0 &  $\cdots$ & 0 & 0 & 0 &  $\cdots$ & 0 & 1 & 1 & $\cdots$ & 1 \\
\hline

$b_1$ & 1 & 0 &  $\cdots$ & 0 & 0 & 0 &  $\cdots$ & 0 & 0 & 0 &  $\cdots$ & 0 & -1 & 0 &  $\cdots$ & 0 & 0 & 0 &  $\cdots$ & 0 \\
$b_2$ & 0 & 1 &  $\cdots$ & 0 & 0 & 0 &  $\cdots$ & 0 & 0 & 0 &  $\cdots$ & 0 & 0 & -1 &  $\cdots$ & 0 & 0 & 0 &  $\cdots$ & 0 \\
$\vdots$ & $\vdots$ & $\vdots$  && $\vdots$&  $\vdots$ & $\vdots$ & & $\vdots$ &
 $\vdots$ & $\vdots$ & & $\vdots$& $\vdots$ & $\vdots$ & & $\vdots$&
  $\vdots$ & $\vdots$ & & $\vdots$\\
$b_n$ & 0 & 0 &  $\cdots$ & 1 & 0 & 0 &  $\cdots$ & 0 & 0 & 0 &  $\cdots$ & 0 & 0 & 0 &  $\cdots$ & -1 & 0 & 0 &  $\cdots$ & 0 \\
\hline

$c_1$ & 0 & 0 &  $\cdots$ & 0 & 1 & 0 &  $\cdots$ & 0 & 0 & 0 &  $\cdots$ & 0 & -1 & 0 &  $\cdots$ & 0 & 0 & 0 &  $\cdots$ & 0 \\
$c_2$ & 0 & 0 &  $\cdots$ & 0 & 0 & 1 &  $\cdots$ & 0 & 0 & 0 &  $\cdots$ & 0 & 0 & -1 &  $\cdots$ & 0 & 0 & 0 &  $\cdots$ & 0 \\
$\vdots$ & $\vdots$ & $\vdots$  && $\vdots$&  $\vdots$ & $\vdots$ & & $\vdots$ &
 $\vdots$ & $\vdots$ & & $\vdots$& $\vdots$ & $\vdots$ & & $\vdots$&
  $\vdots$ & $\vdots$ & & $\vdots$\\
$c_n$ & 0 & 0 &  $\cdots$ & 0 & 0 & 0 &  $\cdots$ & 1 & 0 & 0 &  $\cdots$ & 0 & 0 & 0 &  $\cdots$ & -1 & 0 & 0 &  $\cdots$ & 0 \\
\hline

$d_1$ & 1 & 0 &  $\cdots$ & 0 & 0 & 0 &  $\cdots$ & 0 & 0 & 0 &  $\cdots$ & 0 & 0 & 0 &  $\cdots$ & 0 & -1 & 0 &  $\cdots$ & 0 \\
$d_2$ & 0 & 1 &  $\cdots$ & 0 & 0 & 0 &  $\cdots$ & 0 & 0 & 0 &  $\cdots$ & 0 & 0 & 0 &  $\cdots$ & 0 & 0 & -1 &  $\cdots$ & 0 \\
$\vdots$ & $\vdots$ & $\vdots$  && $\vdots$&  $\vdots$ & $\vdots$ & & $\vdots$ &
 $\vdots$ & $\vdots$ & & $\vdots$& $\vdots$ & $\vdots$ & & $\vdots$&
  $\vdots$ & $\vdots$ & & $\vdots$\\
$d_n$ & 0 & 0 &  $\cdots$ & 1 & 0 & 0 &  $\cdots$ & 0 & 0 & 0 &  $\cdots$ & 0 & 0 & 0 &  $\cdots$ & 0 & 0 & 0 &  $\cdots$ & -1 \\
\hline

$e_1$ & 0 & 0 &  $\cdots$ & 0 & 0 & 0 &  $\cdots$ & 0 & 1 & 0 &  $\cdots$ & 0 & 0 & 0 &  $\cdots$ & 0 & -1 & 0 &  $\cdots$ & 0 \\
$e_2$ & 0 & 0 &  $\cdots$ & 0 & 0 & 0 &  $\cdots$ & 0 & 0 & 1 &  $\cdots$ & 0 & 0 & 0 & $\cdots$ & 0 & 0 & -1 &  $\cdots$ & 0 \\
$\vdots$ & $\vdots$ & $\vdots$  && $\vdots$&  $\vdots$ & $\vdots$ & & $\vdots$ &
 $\vdots$ & $\vdots$ & & $\vdots$& $\vdots$ & $\vdots$ & & $\vdots$&
  $\vdots$ & $\vdots$ & & $\vdots$\\
$e_n$ & 0 & 0 & $\cdots$ & 0 & 0 & 0 &  $\cdots$ & 0 & 0 & 0 &  $\cdots$ & 1 & 0 & 0 &  $\cdots$ & 0 & 0 & 0 &  $\cdots$ & -1\

\end{tabular}}
\end{table}

Given a set of rows $R$ we use the following algorithm to color the rows in $R$:

\begin{enumerate}

\item $l(d_i)=1$ for each $d_i\in R$

\item If $a_5 \in R$, set $l(a_5) = 1$, $l(e_i) = 1$ for each $e_i\in R$ and $l(a_3) = -1$.

\item If $a_5 \notin R$, set $l(e_i) = -1$ for each $e_i\in R$ and $l(a_3) = 1$.

\item If $a_1\in R$, set $l(a_1) = -1$ and $l(b_i) = 1$ for each $b_i\in R$. 

\begin{enumerate}
\item If $a_4\in R$, set $l(a_4) = 1$, $l(c_i) = 1$ for each $c_i\in R$ and $l(a_2) = -1$.

\item If $a_4 \notin R$, set $l(c_i) = -1$ for each $c_i\in R$ and $l(a_2) = 1$.

\end{enumerate}

\item If $a_1 \notin R$, set $l(b_i) = -1$ for each $b_i\in R$.

\begin{enumerate}

\item If $a_4\in R$, set $l(a_4) = -1$, $l(c_i) = -1$ for each $c_i\in R$ and $l(a_2) = 1$.

\item If $a_4 \notin R$, set $l(c_i) = 1$ for each $c_i\in R$ and $l(a_2) = -1$.

\end{enumerate}

\end{enumerate}

If $a_1 \in R$, then $l(a_1)=-1$ and the rows $b_i, d_i$, $\in R$ have always color 1; if $a_1\notin R$, then the rows $b_i\in R$ have color -1 and the rows $d_i\in R$ have color 1. So the coloring is valid for all columns corresponding to $x_i$. In order to prove that the coloring is valid for the columns corresponding to $\ysl_1, \dots, \ysl_n$,  $\ysu_1,  \dots,  \ysu_n$ it is enough to observe that the algorithm always assigns different colors to $a_2$ and the rows $c_i\in R$, and $a_3$ and the rows in $e_i\in R$. If $a_4\in R$, then $a_4$ has the same color as all $b_i\in R$ or all $c_i\in R$; if $a_4\notin R$, then $b_i\in R$ and $c_i\in R$ have different color. In consequence, the coloring is valid for the columns corresponding to $\ysl'_1, \dots, \ysl'_n$. It is also easy to see that the coloring is valid for the columns corresponding to variables $\ysu'_1, \dots,\ysu'_n$ (see steps 1-3).

\end{proof}

\begin{theorem}
\label{thmreccont}
The RREC problem under scenario set $\cU^c$ is solvable in polynomial time.
\end{theorem}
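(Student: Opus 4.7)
The plan is to assemble Theorem~\ref{thmreccont} directly from Lemma~\ref{lemma8} and Lemma~\ref{lemma9}, which together already carry out all the nontrivial work. Lemma~\ref{lem8} reduced the compact MIP~\eqref{submip2} to a situation in which at most one of the variables $\ysl_i,\ysu_i$ is fractional and the multiplier $\pi$ takes only rational values with numerator in $[n]\cup\{0\}$ and denominator in $[n+1]$. Lemma~\ref{lemma8} then showed that, by enumerating the possibly fractional index $j$ (if any), the value of $\pi$, and the aggregate counts $\cY,\cZ,\cY',\cZ'$ (together with $x_j$ when relevant), solving RREC reduces to solving a polynomial number of instances of problem~\eqref{lem9}. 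Finally Lemma~\ref{lemma9} proved that each such instance of~\eqref{lem9} is solvable in polynomial time, because its constraint matrix is totally unimodular and hence the LP relaxation yields an integral optimum.

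Given this, my proof of the theorem is essentially a one-line composition: enumerate $O(\mathrm{poly}(n))$ choices of $(j,\pi,\cY,\cZ,\cY',\cZ',x_j)$ as in Lemma~\ref{lemma8}, for each tuple solve the corresponding LP relaxation of~\eqref{lem9} (which is polynomial in $n$ by Lemma~\ref{lemma9}), and return the best solution. Correctness follows because by Lemma~\ref{lem8} the optimum of~\eqref{submip2}, hence of RREC under $\cU^c$, is attained at one of the enumerated configurations; for that configuration, \eqref{lem9} exactly models the remaining integer decisions, and by total unimodularity its LP optimum is achieved at an integral extreme point. The overall running time is the product of the polynomial number of enumerated tuples and the polynomial LP time, hence polynomial.

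The only step requiring care is bookkeeping: one should verify that the number of tuples $(j,\pi,\cY,\cZ,\cY',\cZ',x_j)$ is indeed polynomial, which it is because $j\in[n]\cup\{\bot\}$, $x_j\in\{0,1\}$, $\pi\in\{q/r:q\in[n]\cup\{0\},\,r\in[n+1]\}$ has $O(n^2)$ values, and each of $\cY,\cZ,\cY',\cZ'$ lies in $\{0,1,\dots,n\}$. Thus at most $O(n^7)$ LPs of size $O(n)$ are solved, giving a strongly polynomial upper bound in conjunction with any polynomial LP solver. I would therefore conclude the proof simply as: \emph{combining Lemmas~\ref{lemma8} and~\ref{lemma9} yields a polynomial-time algorithm for RREC under $\cU^c$.} The main obstacle was not in this theorem itself but in the preceding lemmas; here nothing remains beyond invoking them.
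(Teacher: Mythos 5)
Your proposal is correct and follows exactly the paper's own route: the paper proves Theorem~\ref{thmreccont} as a direct consequence of Lemma~\ref{lemma8} and Lemma~\ref{lemma9}, which is precisely your one-line composition (your added bookkeeping on the number of enumerated tuples is a harmless elaboration).
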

\begin{proof}
This result is a direct consequence of Lemma~\ref{lemma8} and Lemma~\ref{lemma9}.
\end{proof}

\subsection{Two-Stage Robust Selection}
\label{sec32}

In this section we investigate the two-stage model, namely the \textsc{I2ST}, \textsc{A2ST} and \textsc{R2ST} problems under scenario set $\cU^c$. In order to solve \textsc{A2ST} we will use the results obtained for \textsc{AREC}. We will also show that \textsc{R2ST} is polynomially solvable as it can be reduced to solving a polynomial number of linear programming problems.

\subsubsection{The incremental and adversarial problems}
\label{sec32contia}

We are given a first stage solution $\pmb{x}\in \Phi_1$ with $|X_{\pmb{x}}|=p_1$, where $p_1\in [p]\cup\{0\}$. Define $\tilde{p}=p-p_1$. The incremental problem, \textsc{I2ST}, can be solved in $O(n)$ time. It is enough to choose $\tilde{p}$ items of the smallest costs out of $\overline{X}_{\pmb{x}}$ under the given scenario $\pmb{c}$.  On the other hand, the adversarial problem, \textsc{A2ST}, can be reduced to \textsc{AREC}.  We first remove from $[n]$ all the items belonging $X_{\pmb{x}}$, obtaining (after an appropriate renumbering) the item set $[n-p_1]$. We then fix $k=\tilde{p}$. As we can exchange all the items, the choice of the first stage solution in \textsc{AREC} is irrelevant. Consider the formula~(\ref{mipaa3}) for the constructed instance of \textsc{AREC}. The optimal value of $\gamma$ satisfies $\gamma=\alpha$ and~(\ref{mipaa3}) becomes:
\begin{equation}
\label{adv2stc}
\max_{\alpha} \left\{ \alpha \tilde{p} -\max\left \{\sum_{i\in [\tilde{p}]} [\alpha-\cl_i]_+-\Gamma, \sum_{i \in [\tilde{p}]} [\alpha-\cu_i]_+\right\}\right\},
\end{equation}
which is in turn the same as~(\ref{adva}). Problem~(\ref{adv2stc}) can be solved in $O(n^2)$ time, by the method described in Section~\ref{secARECc}. This means that \textsc{A2ST} is solvable in $O(n^2)$ time.

\subsubsection{The two-stage robust problem}

Given $\pmb{x}\in \Phi_1$ and $\pmb{c}\in \cU$, the incremental problem, \textsc{I2ST}, can be formulated as the following linear program:
\begin{equation}
\label{inc2stc}
\begin{array}{lllll}
\min\ & \displaystyle \sum_{i\in[n]} c_i y_i \\
\text{s.t.} & \displaystyle \sum_{i\in[n]} (y_i + x_i )= p \\
& y_i \le 1-x_i & i\in[n] \\
& y_i \in \{0,1\} & i\in[n]
\end{array}
\end{equation}
We can now relax the integrality constraints in~(\ref{inc2stc}) and dualize this problem, to find a compact formulation for the adversarial problem, \textsc{A2ST}, under scenario set $\cU^c$:
\begin{equation}
\label{adv2st}
\begin{array}{lllll}
\max \ & \displaystyle (p-\sum_{i\in[n]} x_i) \alpha - \sum_{i\in[n]} (1-x_i) \gamma_i \\
\text{s.t. } & \alpha \le \delta_i + \gamma_i + \cl_i & i\in[n] \\
& \delta_i \le \cd_i & i\in[n] \\
& \displaystyle \sum_{i\in[n]} \delta_i \le \Gamma \\
& \gamma_i,  \delta_i \ge 0 & i\in[n] 
\end{array}
\end{equation}
Dualizing~\eqref{adv2st}, we get the following problem:
\begin{align*}
\min \ & \sum_{i\in[n]} \cl_i y_i + \Gamma \pi + \sum_{i\in[n]} \cd_i\rho_i \\
\text{s.t. } & \sum_{i\in[n]} (y_i + x_i) = p \\
& \pi + \rho_i \ge y_i & i\in[n] \\
& y_i \le 1 - x_i & i\in [n]\\
& \pi \ge 0 \\
& \rho_i \ge 0 & i\in [n]\\
& y_i \in [0,1] & i\in[n]
\end{align*}
which can be used to construct the following mixed-integer program for \textsc{R2ST}:
\begin{equation}
\label{rec2stmip}
\begin{array}{llll}
\min \ & \displaystyle \sum_{i\in[n]} C_i x_i + \sum_{i\in[n]} \cl_i y_i + \Gamma \pi + \sum_{i\in[n]} \cd_i\rho_i \\
\text{s.t. } & \displaystyle \sum_{i\in[n]} (y_i + x_i) = p \\
& x_i + y_i \le 1 &  i\in[n] \\
& \pi + \rho_i \ge y_i &  i\in[n] \\
& \pi \ge 0 \\
&\rho_i \ge 0 & i\in[n] \\
& x_i \in \{0,1\} & i\in[n]\\
& y_i \in [0,1] & i\in[n]
\end{array}
\end{equation}
We now show that~(\ref{rec2stmip}) can be solved in polynomial time.  We first apply to~(\ref{rec2stmip}) similar transformation as for the \textsc{RREC} problem (see Section~\ref{secRRCalg}),
which results in the following equivalent formulation:
\begin{equation}
\label{mip2st1}
\begin{array}{lllll}
\min\ & \displaystyle\sum_{i\in[n]} C_i x_i + \sum_{i\in[n]} \cl_i \yl_i + \sum_{i\in[n]} \cu \yu_i + \Gamma \pi \\
\text{s.t. } & \displaystyle\sum_{i\in[n]} (x_i + \yl_i + \yu_i)= p \\
& x_i + \yl_i \le 1 &  i\in[n] \\
& x_i + \yu_i \le 1 &  i\in[n] \\
& x_i\in\{0,1\} & i\in[n]\\
& \yl_i \in [0,\pi] & i\in[n]\\
& \yu_i \in [0,1-\pi] & i\in[n]\\
& \pi \in [0,1]
\end{array}
\end{equation}
Again, by setting $\ysl_i\pi = \yl_i$ and $\ysu_i (1-\pi) = \yu_i$, we rescale the variables and find the following equivalent, nonlinear problem:
\begin{equation}
\label{mip2st2}
\begin{array}{lllll}
\min\ & \displaystyle \sum_{i\in[n]} C_i x_i + \sum_{i\in[n]} \pi \cl_i \ysl_i + \sum_{i\in[n]} (1-\pi) \cu_i \ysu_i + \Gamma \pi \\
\text{s.t. } &  \displaystyle\sum_{i\in[n]} (x_i + \pi \ysl_i + (1-\pi) \ysu_i) = p \\
& x_i + \ysl_i \le 1 &  i\in[n] \\
& x_i + \ysu_i \le 1 &  i\in[n] \\
& x_i\in\{0,1\} &  i\in[n]\\
& \ysl_i, \ysu_i \in [0,1] &  i\in[n]\\
& \pi \in [0,1]
\end{array}
\end{equation}
Note that we can write $x_i + \ysl_i \le 1$ instead of $x_i + \pi \ysl_i \le 1$ and $x_i+\ysu_i \le 1$ instead of $x_i+ (1-\pi) \ysu_i \leq 1$ for each $i\in [n]$.

\begin{lemma}\label{lem1}
There exists an optimal solution to~\eqref{mip2st2} in which
\begin{enumerate}
 \item $\ysl_i, \ysu_i \in \{0,1\}$ for all $i\in[n]$,
 \item $\pi=\frac{q}{r}$, where $q\in [p] \cup \{0\}$, $r\in [n]$.
 \end{enumerate}
\end{lemma}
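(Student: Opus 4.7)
The plan is to mirror the basic-feasible-solution analysis of Lemma~\ref{lem8}, adapted to the simpler structure of~(\ref{mip2st1}): the absence of a coupling constraint analogous to $\sum_i x_iy_i\geq p-k$ is precisely what should remove the fractional-variable exception present in Lemma~\ref{lem8} and yield full integrality in part~(1). I would fix an optimal $\pmb{x}^*$ of~(\ref{mip2st2}) and restrict attention to $I=[n]\setminus X_{\pmb{x}^*}$, since $x_i+\ysl_i\leq 1$ and $x_i+\ysu_i\leq 1$ force $\ysl_i=\ysu_i=0$ outside $I$; set $\tilde p=p-|X_{\pmb{x}^*}|$. Passing to the equivalent formulation~(\ref{mip2st1}) on $I$ and introducing slacks $\alpha_i=\pi-\yl_i$, $\beta_i=(1-\pi)-\yu_i$, and $\gamma=1-\pi$, I obtain an LP with $2|I|+2$ equality constraints in $4|I|+2$ nonnegative variables, so every BFS has exactly $2|I|+2$ basic variables.

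The boundary cases $\pi\in\{0,1\}$ collapse to a pure integer selection problem in either $\yu$ or $\yl$, producing an integer optimum and $\pi\in\{0/1,1/1\}$. In the interior case $0<\pi<1$ both $\pi$ and $\gamma$ must be basic, leaving $2|I|$ basic slots for $\{\yl_i,\alpha_i,\yu_i,\beta_i:i\in I\}$. The key observation is that $\pi>0$ rules out having both $\yl_i$ and $\alpha_i$ nonbasic (this would contradict $\yl_i+\alpha_i=\pi$), and analogously for $\{\yu_i,\beta_i\}$; since the $2|I|$ pairs demand at least $2|I|$ basic slots and exactly that many remain, each pair contains exactly one basic variable. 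Hence $\yl_i\in\{0,\pi\}$ and $\yu_i\in\{0,1-\pi\}$, which is $\ysl_i,\ysu_i\in\{0,1\}$ on $I$, giving part~(1).

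For part~(2), I put $\cY=\sum_{i\in I}\ysl_i$ and $\cZ=\sum_{i\in I}\ysu_i$; substituting into the sum constraint yields $\pi\cY+(1-\pi)\cZ=\tilde p$. If $\cY=\cZ$ then $\cY=\tilde p$ and I take $\pi=1$; otherwise $\pi=(\tilde p-\cZ)/(\cY-\cZ)$, which after sign flipping is $q/r$ for nonnegative integers with $r=|\cY-\cZ|\leq|I|\leq n$. The feasibility bounds $0\leq\pi\leq 1$ force $\min(\cY,\cZ)\leq\tilde p\leq\max(\cY,\cZ)$, so in the case $\cY>\cZ$ one immediately obtains $q=\tilde p-\cZ\leq\tilde p\leq p$. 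The main technical obstacle I expect is the reverse subcase $\cY<\cZ$, where $q=\cZ-\tilde p$ only enjoys the a~priori bound $q\leq|I|\leq n$; here I plan to use a normalization step arguing that if an optimal BFS has $\cZ>\cY$, then simultaneously toggling the pattern $(\ysl_i,\ysu_i)=(0,1)\to(1,0)$ on the offending items and resetting $\pi$ by the sum constraint produces another optimal BFS with $\cY\geq\cZ$, after which $q\leq\tilde p\leq p$ follows as before.
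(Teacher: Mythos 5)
Your basic-feasible-solution analysis for part~1 matches the paper's proof, and your derivation $\pi=(\tilde p-\cZ)/(\cY-\cZ)$ with the bound $q=\tilde p-\cZ\le p$ in the case $\cY>\cZ$ is also the paper's argument. The genuine gap is exactly where you anticipate trouble: the subcase $\cY<\cZ$. Your proposed fix --- toggling $(\ysl_i,\ysu_i)=(0,1)\to(1,0)$ on the offending items and then ``resetting $\pi$ by the sum constraint'' --- is not justified and does not obviously work. Such a toggle changes item $i$'s contribution to the cardinality constraint from $1-\pi$ to $\pi$, so $\pi$ must change globally; but the objective $\sum_i \pi\cl_i\ysl_i+\sum_i(1-\pi)\cu_i\ysu_i+\Gamma\pi$ depends on $\pi$ through \emph{every} term (including $\Gamma\pi$ and all non-toggled items), so there is no reason the new point is ``another optimal BFS.'' You would need a separate monotonicity argument that you have not supplied.

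The paper avoids this case entirely by a much cheaper, $\pi$-\emph{preserving} local exchange carried out in the unscaled formulation~\eqref{mip2st1}: if $\yu_i>0$ and $\yl_i<\pi$, shift mass $\varepsilon$ from $\yu_i$ to $\yl_i$. This keeps $\yl_i+\yu_i$ (hence the cardinality constraint and $\pi$ itself) unchanged and changes the cost by $\varepsilon(\cl_i-\cu_i)\le 0$, so some optimal solution has $\yu_i>0\Rightarrow\yl_i=\pi$, i.e.\ $\ysu_i>0\Rightarrow\ysl_i=1$. Combined with the integrality from part~1 this gives $\ysl_i\ge\ysu_i$ for all $i$, hence $\cY\ge\cZ$, and your problematic subcase never arises. (This is the ``cheaper part of each item is taken first'' observation stated after~\eqref{submip1} and invoked in the paper's proof as ``recall that $\ysl^*_i\ge\ysu^*_i$.'') A minor additional point: in the degenerate case $\cY=\cZ$ you should take the \emph{better} of the endpoints $\pi\in\{0,1\}$ (the objective is linear in $\pi$ for fixed integral $\ysl,\ysu$), rather than fixing $\pi=1$ outright.
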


\begin{proof}
We first prove point 1.
Let $\pmb{x}^*\in \Phi_1$ be optimal in~(\ref{mip2st2}). Using formulation~\eqref{mip2st1}, we consider the following linear program for fixed $\pmb{x}=\pmb{x}^*$ and additional slack variables:
\begin{align*}
\min\ &  \sum_{i\in\overline{X}_{\pmb{x}^*}} \cl_i \yl_i + \sum_{i\in\overline{X}_{\pmb{x}^*}} \cu_i \yu_i + \Gamma\pi\\
\text{s.t. } & \sum_{i\in\overline{X}^*} (\yl_i + \yu_i )= p - \sum_{i\in[n]} x^*_i\\
& \yl_i + \alpha_i = \pi  &  i\in\overline{X}_{\pmb{x}^*}\\
& \yu_i + \beta_i = 1- \pi  &  i\in\overline{X}_{\pmb{x}^*}\\
& \pi + \gamma = 1 \\
& \yl_i, \yu_i, \alpha_i, \beta_i\ge 0 &  i\in\overline{X}_{\pmb{x}^*}\\
&\pi,\gamma \ge 0
\end{align*}
This problem has $4|\overline{X}_{\pmb{x}*}|+2$ variables and $2|\overline{X}_{\pmb{x}^*}|+2$ constraints. Thus, there is an optimal solution with $2|\overline{X}_{\pmb{x}^*}|+2$ basis variables and $2|\overline{X}_{\pmb{x}^*}|$ non-basis variables. 
If $\pi=0$, then $\yl_i=0$ for all $i\in\overline{X}_{\pmb{x}^*}$ and the problem becomes a selection problem in $\pmb{\yu}$, for which there is an optimal integer solution. If $\pi =1$, then $\yu_i =0$ for each $i\in [n]$ and the problem becomes a selection problem in $\pmb{\yl}$. Hence, in both these cases, there exists an optimal solution to~\eqref{mip2st2} that is integer in $\pmb{\ysl}$ and $\pmb{\ysu}$.
 So let us assume  that $\pi >0$ and $\pi < 1$, i.e., both $\pi$ and $\gamma$ are basis variables. Note that whenever $\yl_i$ (resp. $\yu_i$) is a non-basis variable, then $\alpha_i$ (resp. $\beta_i$) is a basis variable, and vice versa. Hence, all variables $\yl_i$ are either $0$ or $\pi$, and all variables $\yu_i$ are either $0$ or $1-\pi$. This corresponds to a solution where all $\ysl_i$ and $\ysu_i$ are either $0$ or $1$ in formulation~\eqref{mip2st2}.

We now prove point 2. Let  $x^*_i, \ysl^*_i,\ysu^*_i\in\{0,1\}$, $i\in [n]$,  be  optimal in~(\ref{mip2st2}). If $\sum_{i\in[n]} \ysl^*_i = \sum_{i\in[n]} \ysu^*_i$, then there exists an optimal solution with $\pi^*\in\{0,1\}$. So let us assume $\sum_{i\in[n]} \ysl^*_i > \sum_{i\in[n]} \ysu^*_i$ (recall that $\ysl^*_i\geq \ysu^*_i$ for each $i\in [n]$). By rearranging terms, we obtain
\[ \pi =\frac{p - \sum_{i\in[n]} (x^*_i + \ysu^*_i)}{\sum_{i\in[n]} (\ysl^*_i - \ysu^*_i )}. \]
Write $\cX= \sum_{i\in[n]} x^*_i$, $\cY= \sum_{i\in[n]} \ysl^*_i$ and $\cZ = \sum_{i\in[n]} \ysu^*_i$. We have $\cX\in\{0,\ldots,p\}$, $\cY,\cZ\in\{0,\ldots,n\}$. Note that if for an item $i$ we have $\ysu^*_i = 1$, then also $\ysl^*_i = 1$. Consequently, $\pi$ is of the form described in point~2 of the lemma.
\end{proof}

\begin{lemma}\label{lem3}
The \textsc{R2ST} problem under $\cU^c$ can be solved by solving problem
\begin{equation}
\label{mipl4}
\begin{array}{lllll}
\min\ & \displaystyle \sum_{i\in [n]} C_i x_i + \sum_{i\in [n]} \pi \cl_i \ysl_i + \sum_{i\in [n]} (1-\pi)\cu_i \ysu_i \\
\text{s.t. } & \displaystyle \sum_{i\in [n]} x_i = \cX \\
& \displaystyle \sum_{i\in [n]} \ysl_i = \cY \\
& \displaystyle \sum_{i\in [n]} \ysu_i = \cZ \\
& x_i + \ysl_i \le 1 & i\in[n]\\
& x_i + \ysu_i \le 1 & i\in[n]\\
& x_i,\ysl_i,\ysu_i \in \{0,1\} & i\in[n]
\end{array}
\end{equation}
for polynomially many values of $\cX, \cY, \cZ$ and $\pi$.
\end{lemma}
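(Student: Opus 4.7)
The plan is to use Lemma~\ref{lem1} to restrict attention to solutions of~\eqref{mip2st2} with $\pmb{\ysl},\pmb{\ysu}\in\{0,1\}^n$ and $\pi$ of the form $q/r$, and then to enumerate the aggregate counts $\cX=\sum_{i\in[n]} x_i$, $\cY=\sum_{i\in[n]}\ysl_i$, $\cZ=\sum_{i\in[n]}\ysu_i$, which together with the cardinality constraint of~\eqref{mip2st2} pin down $\pi$. Since $\cX\in\{0,\ldots,p\}$ and $\cY,\cZ\in\{0,\ldots,n\}$, there are $O(n^3)$ admissible triples to try.

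First I would rewrite the cardinality constraint of~\eqref{mip2st2} as $\cX+\pi(\cY-\cZ)+\cZ=p$. Whenever $\cY\neq\cZ$ this uniquely determines $\pi=(p-\cX-\cZ)/(\cY-\cZ)$, and the triple is retained only if the resulting $\pi$ lies in $[0,1]$. This expression coincides with the $q/r$ form guaranteed by the second part of Lemma~\ref{lem1}. The degenerate case $\cY=\cZ$ forces $\ysl_i=\ysu_i$ pointwise (since $\ysl_i\ge\ysu_i$ in the rescaled reformulation) and makes the objective of~\eqref{mip2st2} affine in $\pi$, so it is enough to try $\pi\in\{0,1\}$, both of which are already among the candidate values from Lemma~\ref{lem1}.

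For each admissible tuple $(\cX,\cY,\cZ,\pi)$ I would then verify that~\eqref{mipl4} captures exactly the remaining structure of~\eqref{mip2st2}: the three sum equalities fix the counts, the pairwise constraints $x_i+\ysl_i\le 1$ and $x_i+\ysu_i\le 1$ are inherited unchanged, and the cardinality equation of~\eqref{mip2st2} is satisfied automatically by construction. The two objectives differ only by the constant $\Gamma\pi$, which can be added back after solving~\eqref{mipl4}. Taking the minimum, over all $O(n^3)$ admissible tuples, of the optimal value of~\eqref{mipl4} augmented by $\Gamma\pi$ therefore recovers the optimum of \textsc{R2ST} under $\cU^c$.

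The main obstacle is the bookkeeping in the degenerate case $\cY=\cZ$, where $\pi$ is no longer determined by the cardinality equation and one must argue that the endpoints $\pi\in\{0,1\}$ suffice. This reduces to observing that, on the feasible region obtained by imposing $\ysl_i=\ysu_i$, the objective contribution of each chosen item is $\pi\cl_i+(1-\pi)\cu_i=\cu_i-\pi\cd_i$, so the total objective is affine in $\pi$ and its optimum over $[0,1]$ is attained at an endpoint.
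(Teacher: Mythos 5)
Your proposal is correct and follows essentially the same route as the paper: invoke Lemma~\ref{lem1} to restrict to integral $\pmb{\ysl},\pmb{\ysu}$, enumerate the polynomially many tuples $(\cX,\cY,\cZ,\pi)$ consistent with the cardinality equation $\cX+\pi\cY+(1-\pi)\cZ=p$, and solve~\eqref{mipl4} (plus the constant $\Gamma\pi$) for each. The only cosmetic difference is that you enumerate the count triple and derive $\pi$ from it, whereas the paper enumerates the candidate values of $\pi$ first and then the compatible counts; your explicit handling of the degenerate case $\cY=\cZ$ correctly fills in a detail the paper disposes of inside the proof of Lemma~\ref{lem1}.
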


\begin{proof}
Using Lemma~\ref{lem1}, we will try all possible values of $\pi$ and for each fixed $\pi$ we will find an optimal solution to~(\ref{mip2st2}) where all $\pmb{x}$, $\pmb{\ysl}$ and $\pmb{\ysu}$ are integer. Let the value $\pi=\pi^*$ be fixed. The resulting problem is then
\begin{align*}
\min\ & \sum_{i\in[n]} C_i x_i + \sum_{i\in[n]} \pi^* \cl_i \ysl_i + \sum_{i\in[n]} (1-\pi^*) \cu_i \ysu_i + \Gamma \pi^* \\
\text{s.t. } & \sum_{i\in[n]} x_i + \pi^* \sum_{i\in[n]} \ysl_i + (1-\pi^*) \sum_{i\in[n]}  \ysu_i = p \\
& x_i + \ysl_i \le 1 &  i\in[n] \\
& x_i + \ysu_i \le 1 &  i\in[n] \\
& x_i,\ysl_i,\ysu_i \in \{0,1\} & i\in[n] 
\end{align*}
As $\pi^*$ is fixed, we can enumerate all possible values of $\cX=\sum_{i\in[n]} x_i$, $\cY=\sum_{i\in[n]} \ysl_i$ and $\cZ=\sum_{i\in[n]} \ysu_i$ that generate this value $\pi^*$, i.e., we enumerate all possible solutions to $\cX+ \pi^* \cY+ (1-\pi^*) \cZ = p$. There can be at most $p$ choices of $\cX$ and at most $n$ choices of $\cY$ and $\cZ$. This leads to the problem~\eqref{mipl4}.
By choosing the best of the computed solutions, we then find an optimal solution to \textsc{R2ST}.
\end{proof}

\begin{lemma}\label{lem4}
For fixed $\cX, \cY, \cZ$ and $\pi$,  the problem~\eqref{mipl4} can be solved in polynomial time.
\end{lemma}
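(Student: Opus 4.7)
The plan is to prove Lemma~\ref{lem4} by the same strategy as Lemma~\ref{lemma9}: after relaxing $x_i,\ysl_i,\ysu_i$ to lie in $[0,1]$, I would show that the coefficient matrix of~\eqref{mipl4} is totally unimodular, so that the LP relaxation has an integer optimum and the problem reduces to a single linear program solvable in polynomial time. Label the constraint rows as $a_1$ ($\sum_i x_i=\cX$), $a_2$ ($\sum_i \ysl_i=\cY$), $a_3$ ($\sum_i \ysu_i=\cZ$), and, for each $i\in[n]$, $b_i$ ($x_i+\ysl_i\le 1$) and $c_i$ ($x_i+\ysu_i\le 1$). As in Lemma~\ref{lemma9}, the single-variable upper-bound constraints can be dropped without affecting total unimodularity. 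Note that the column of $x_i$ has its three nonzero $(+1)$ entries only in rows $a_1,b_i,c_i$, the column of $\ysl_i$ only in $a_2,b_i$, and the column of $\ysu_i$ only in $a_3,c_i$.

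To verify the Ghouila-Houri criterion~\cite{C63}, pick an arbitrary $R\subseteq\{a_1,a_2,a_3\}\cup\{b_i,c_i:i\in[n]\}$ and define $l:R\to\{-1,+1\}$ by cases on $a_1$. If $a_1\in R$, set $l(a_1)=-1$, set $l(b_i)=l(c_i)=+1$ for every $b_i,c_i\in R$, and set $l(a_2)=-1$, $l(a_3)=-1$ whenever those rows lie in $R$. If $a_1\notin R$, set $l(b_i)=+1$ and $l(c_i)=-1$ for every $b_i,c_i\in R$, $l(a_2)=-1$ if $a_2\in R$, and $l(a_3)=+1$ if $a_3\in R$. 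A direct check then shows that in both cases the weighted sum of every column restricted to $R$ lies in $\{-1,0,1\}$: for $a_1\in R$ the $x_i$-column sum is $-1+\mathbf{1}[b_i\in R]+\mathbf{1}[c_i\in R]$, while for $a_1\notin R$ it is $\mathbf{1}[b_i\in R]-\mathbf{1}[c_i\in R]$; independently of the case, the $\ysl_i$-column sum evaluates to $-\mathbf{1}[a_2\in R]+\mathbf{1}[b_i\in R]$, and the $\ysu_i$-column sum evaluates to $\pm\mathbf{1}[a_3\in R]\mp\mathbf{1}[c_i\in R]$, all of which lie in $\{-1,0,1\}$.

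The main obstacle is simply finding the right coloring: the two-case split on $a_1$ is forced because $x_i$ appears in three rows simultaneously, so the signs assigned to $b_i$ and $c_i$ must adapt to whether $l(a_1)$ already contributes a $-1$. Once the coloring above is in place, the remaining verifications are mechanical. Since the right-hand sides $\cX,\cY,\cZ$ are integers, total unimodularity guarantees that the LP relaxation of~\eqref{mipl4} has an integer optimum, so~\eqref{mipl4} can be solved in polynomial time by linear programming.
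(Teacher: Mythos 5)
Your proposal is correct and follows essentially the same route as the paper: relax the integrality constraints, verify total unimodularity of the coefficient matrix via the Ghouila--Houri criterion, and conclude that the LP relaxation has an integral optimum. Your two-case coloring (split on whether $a_1\in R$) is just a more compact packaging of the paper's eight-case enumeration over $A\subseteq\{a_1,a_2,a_3\}$, and the column sums you compute confirm it is valid.
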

\begin{proof}
We prove that the coefficient matrix of the relaxation of~\eqref{mipl4} is totally unimodular. We will use the Ghouila-Houri criterion~\cite{C63} (see the proof of Lemma~\ref{lemma9}). The coefficient matrix of the constraints of~\eqref{mipl4} is shown in Table~\ref{l5table} (we can skip the relaxed constraints $x_i, \ysl_i, \ysu_i\leq 1$).
\begin{table}[htb]
\centering
\caption{coefficient matrix of problem~\eqref{mipl4}.}\label{l5table}
\begin{tabular}{c | c c c c c | c c c c c | c c c c c }
 & $x_1$ & $x_2$ & $x_3$ & $\cdots$ & $x_n$ & $\ysl_1$ & $\ysl_2$ & $\ysl_3$ & $\cdots$ & $\ysl_n$ & $\ysu_1$ & $\ysu_2$ & $\ysu_3$ & $\cdots$ & $\ysu_n$  \\
 \hline
$a_1$ & 1 & 1 & 1 & $\cdots$ & 1 & 0 & 0 & 0 & $\cdots$ & 0 & 0 & 0 & 0 & $\cdots$ & 0 \\
$a_2$ & 0 & 0 & 0 & $\cdots$ & 0 & 1 & 1 & 1 & $\cdots$ & 1 & 0 & 0 & 0 & $\cdots$ & 0 \\
$a_3$ & 0 & 0 & 0 & $\cdots$ & 0 & 0 & 0 & 0 & $\cdots$ & 0 & 1 & 1 & 1 & $\cdots$ & 1 \\
\hline
$b_1$ & 1 & 0 & 0 & $\cdots$ & 0 & 1 & 0 & 0 & $\cdots$ & 0 & 0 & 0 & 0 & $\cdots$ & 0 \\
$b_2$ & 0 & 1 & 0 & $\cdots$ & 0 & 0 & 1 & 0 & $\cdots$ & 0 & 0 & 0 & 0 & $\cdots$ & 0 \\
$b_3$ & 0 & 0 & 1 & $\cdots$ & 0 & 0 & 0 & 1 & $\cdots$ & 0 & 0 & 0 & 0 & $\cdots$ & 0 \\
$\vdots$ & $\vdots$ & $\vdots$ & $\vdots$ & & $\vdots$ & $\vdots$ & $\vdots$ & $\vdots$ & & $\vdots$ & $\vdots$ & $\vdots$ & $\vdots$ & & $\vdots$\\
$b_n$ & 0 & 0 & 0 & $\cdots$ & 1 & 0 & 0 & 0 & $\cdots$ & 1 & 0 & 0 & 0 & $\cdots$ & 0 \\
\hline
$c_1$ & 1 & 0 & 0 & $\cdots$ & 0 & 0 & 0 & 0 & $\cdots$ & 0 & 1 & 0 & 0 & $\cdots$ & 0 \\
$c_2$ & 0 & 1 & 0 & $\cdots$ & 0 & 0 & 0 & 0 & $\cdots$ & 0 & 0 & 1 & 0 & $\cdots$ & 0 \\
$c_3$ & 0 & 0 & 1 & $\cdots$ & 0 & 0 & 0 & 0 & $\cdots$ & 0 & 0 & 0 & 1 & $\cdots$ & 0 \\
$\vdots$ & $\vdots$ & $\vdots$ & $\vdots$ & & $\vdots$ & $\vdots$ & $\vdots$ & $\vdots$ & & $\vdots$ & $\vdots$ & $\vdots$ & $\vdots$ & & $\vdots$\\
$c_n$ & 0 & 0 & 0 & $\cdots$ & 1 & 0 & 0 & 0 & $\cdots$ & 0 & 0 & 0 & 0 & $\cdots$ & 1
\end{tabular}
\end{table}

Let us choose a subset of the rows $R=A\cup B \cup C$ with $A\subseteq\{a_1,a_2,a_3\}$, $B\subseteq \{b_1,\ldots,b_n\}$ and $C\subseteq\{c_1,\ldots,c_n\}$.
We now determine the coloring for $R$ in the following way:
\begin{itemize}
\item If $A=\emptyset$, then $l(b_i) = -1$, $l(c_i) = 1$.

\item If $A=\{a_1\}$, then $l(a_1) = -1$, $l(b_i)=l(c_i)=1$.

\item If $A=\{a_2\}$, then $l(a_2) = -1$, $l(b_i) = 1$, $l(c_i) = -1$.

\item If $A=\{a_1,a_2\}$, then $l(a_1)=l(a_2) = -1$, $l(b_i)=l(c_i) =1$.

\item If $A=\{a_3\}$, then $l(a_3) = -1$, $l(b_i) = -1$, $l(c_i) = 1$.

\item If $A=\{a_1,a_3\}$, then $l(a_1)=l(a_3) = -1$, $l(b_i)=l(c_i) = 1$.

\item If $A=\{a_2,a_3\}$, then $l(a_2)=-1$, $l(a_3)=1$, $l(b_i)=1$, $l(c_i)=-1$.

\item If $A=\{a_1,a_2,a_3\}$, then $l(a_i) = -1$, $l(b_i)=l(c_i) = 1$.
\end{itemize}
It is easy to verify that the coloring is valid for each of these cases.
\end{proof}

\begin{theorem}
The \textsc{R2ST} problem under scenario set $\cU^c$ is solvable in polynomial time.
\end{theorem}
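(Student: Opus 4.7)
The plan is straightforward: the theorem follows as an immediate consequence of the two preceding lemmas, in the same spirit as Theorem~\ref{thmreccont} follows from Lemmas~\ref{lemma8} and~\ref{lemma9}. I would simply chain them together.

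First, I would invoke Lemma~\ref{lem3}, which reduces solving \textsc{R2ST} under $\cU^c$ to solving problem~\eqref{mipl4} for polynomially many choices of the parameters $\cX$, $\cY$, $\cZ$, and $\pi$. Concretely, by Lemma~\ref{lem1} the value of $\pi$ can be restricted to at most $O(pn)$ rationals of the form $q/r$ with $q\in [p]\cup\{0\}$ and $r\in[n]$, while $\cX\in\{0,\ldots,p\}$ and $\cY,\cZ\in\{0,\ldots,n\}$ give at most $O(pn^2)$ further combinations. Hence the total number of subproblems~\eqref{mipl4} that must be solved is polynomial in $n$.

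Next, I would apply Lemma~\ref{lem4} to each of these polynomially many subproblems. That lemma shows that the constraint matrix of the LP relaxation of~\eqref{mipl4} is totally unimodular (via the Ghouila-Houri criterion), so each subproblem can be solved in polynomial time by simply solving its linear programming relaxation.

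Finally, I would take the minimum over all the solutions produced, which by Lemma~\ref{lem3} yields an optimal solution to \textsc{R2ST} under $\cU^c$. Since we solve polynomially many polynomial-time subproblems and then take a minimum, the overall procedure is polynomial. There is no real obstacle here; all the technical work (characterizing fractional optima, reducing to a parametric family, and proving total unimodularity) has already been done in Lemmas~\ref{lem1}, \ref{lem3}, and~\ref{lem4}, and the theorem is essentially a one-line combination of Lemmas~\ref{lem3} and~\ref{lem4}.
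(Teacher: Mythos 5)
Your proposal is correct and follows exactly the paper's own argument: the theorem is stated there as a direct consequence of Lemma~\ref{lem3} and Lemma~\ref{lem4}, which is precisely the chaining you describe. The extra detail you supply on counting the polynomially many subproblems is consistent with the proofs of Lemmas~\ref{lem1} and~\ref{lem3} and adds nothing contentious.
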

\begin{proof}
 A direct consequence of Lemma~\ref{lem3} and Lemma~\ref{lem4}.
\end{proof}

\section{Discrete Budgeted Uncertainty}

\label{sec4}

In this section we consider the \textsc{RREC} and \textsc{R2ST} problems under scenario set $\cU^d$. We will use some results obtained for the continuous budgeted uncertainty (in particular Proposition~\ref{propkk}). Notice also that the incremental problems \textsc{IREC} and \textsc{I2ST} are the same as for the continuous case.

\subsection{Recoverable Robust Selection}

\subsubsection{The adversarial problem}
\label{secadvdisc}

Let us fix solution~$\pmb{x}\in\Phi$. The adversarial problem, \textsc{AREC}, under scenario set $\cU^d$, can be  represented as the following mathematical programming problem:
\begin{equation}
\label{dbcadv5}
\begin{array}{llll}
\displaystyle \max_{\pmb{\delta}\in\{0,1\}^n \atop \sum_{i\in[n]} \delta_i \le \Gamma }& \displaystyle\min_{\pmb{y}}\ &\displaystyle \sum_{i\in[n]} (\cl_i+\cd_i \delta_i)y_i  \\
&\text{s.t. } & \displaystyle \sum_{i\in[n]} y_i = p \\
&& \displaystyle \sum_{i\in[n]} x_i y_i \ge p-k \\
&& y_i \in \{0,1\} &  i\in[n]
\end{array}
\end{equation}
Relaxing the integrality constraints $y_i\in \{0,1\}$ in~(\ref{dbcadv5}) for the inner incremental problem, and
taking  the dual of it, we obtain the following integer linear program for \textsc{AREC}:

\begin{equation}
\label{arecdisc0}
\begin{array}{lll}
\max\ &\displaystyle p\alpha + (p-k) \beta - \sum_{i\in[n]} \gamma_i \\
\text{s.t. } & \alpha + x_i \beta - \gamma_i \le  \cl_i+\cd_i \delta_i  &  i\in[n]\\
&\displaystyle \sum_{i\in[n]} \delta_i \leq \Gamma \\
& \beta \ge 0 \\
& \gamma_i \ge 0 &  i\in[n] \\
& \delta_i \in \{0,1\} &  i\in[n]
\end{array}
\end{equation}
Let $\pmb{\delta}^{*}\in \{0,1\}^n$ be optimal in~(\ref{arecdisc0}). The vector $\pmb{\delta}^*$ describes the worst scenario $\hat{\pmb{c}}=(\hat{c}_i)_{i\in[n]}=(\cl_i+\cd_i \delta^{*}_i)_{i\in [n]}\in \cU^d$. When we fix this scenario in~(\ref{arecdisc0}), then we get the problem~(\ref{incdual}), discussed in Section~\ref{sec31}, to which Proposition~\ref{propkk} can be applied. Since $\hat{c}_i$ is either $\cl_i$ or $\cu_i$ for each $i\in [n]$, only a finite number of values of $\alpha$ and $\beta$ need to be considered as optimal to~(\ref{arecdisc0}) (see Proposition~\ref{propkk}).
In the following, we will show how to find these values efficiently.
We can fix $\gamma_i =  [\alpha + x_i \beta - \cd_i\delta_i -\cl_i]_+$ for each $i\in[n]$ in~(\ref{arecdisc0}).
Hence, $\gamma_i=[ \alpha + x_i \beta - \cd_i - \cl_i]_+$ if  $\delta_i = 1$, and $\gamma_i= [\alpha + x_i \beta -\cl_i]_+$ if $\delta_i=0$.
In consequence,~(\ref{arecdisc0}) can be rewritten as follows:
\begin{equation}
\label{arecdisc}
\begin{array}{llll}
\max\ & \displaystyle p\alpha + (p-k) \beta - \sum_{i\in[n]}  [ \alpha + x_i \beta -\cl_i]_+ \\
& \displaystyle+\sum_{i\in[n]} (  [ \alpha + x_i \beta -\cl_i]_+ - [ \alpha + x_i \beta - \cd_i -\cl_i]_+) \delta_i  \\
\text{s.t. } & \displaystyle \sum_{i\in[n]} \delta_i \leq\Gamma  \\
& \delta_i \in \{0,1\} &  i\in[n] \\
& \beta \ge 0
\end{array}
\end{equation}

It is easily seen that for fixed $\alpha$, $\beta$,  and $\pmb{x}$, (\ref{arecdisc}) is the 
 \textsc{Selection} problem, which can be solved in $O(n)$ time.
We now find the sets of relevant values of $\alpha$ and $\beta$.
Let us order the elements in $[n]$ according to their costs~$\hat{c}_i$ and the cost bounds 
$\cl_i$ and $\cu_i$ for $i\in [n]$ in the following way:
\[
 \hat{c}_{\sigma(1)}\leq\cdots\leq \hat{c}_{\sigma(n)},\mbox{ }
 \cl_{\underline{\sigma}(1)}\leq\cdots\leq \cl_{\underline{\sigma}(n)}, \mbox{ }
  \cu_{\overline{\sigma}(1)}\leq\cdots\leq \cu_{\overline{\sigma}(n)}.
\]
Similarly, let us  order the elements in~$X_{\pmb{x}}$  so that
\[
 \hat{c}_{\nu(1)}\leq\cdots\leq \hat{c}_{\nu(p)},\mbox{ }
 \cl_{\underline{\nu}(1)}\leq\cdots\leq \cl_{\underline{\nu}(p)}, \mbox{ }
  \cu_{\overline{\nu}(1)}\leq\cdots\leq \cu_{\overline{\nu}(p)}
\]
and in $\overline{X}_{\pmb{x}}$, namely
\[
 \hat{c}_{\varsigma(1)}\leq\cdots\leq \hat{c}_{\varsigma(n-p)},\mbox{ }
 \cl_{\underline{\varsigma}(1)}\leq\cdots\leq \cl_{\underline{\varsigma}(n-p)}, \mbox{ }
  \cu_{\overline{\varsigma}(1)}\leq\cdots\leq \cu_{\overline{\varsigma}(n-p)}.
\]
According to Proposition~\ref{propkk},  $\alpha=\hat{c}_{\sigma(p)}$ and $\beta=0$, or $\alpha=\hat{c}_{\varsigma(k)}$, and $\beta=\hat{c}_{\nu(p-k)}-\hat{c}_{\varsigma(k)}$ are optimal in~(\ref{arecdisc}).
Thus we have
\begin{align*}
 &\hat{c}_{\sigma(p)}\in \mathcal{C}_{\sigma(p)}=
 \{ \cl_{\underline{\sigma}(p)},\ldots, \cl_{\underline{\sigma}(p+\Gamma)}\}\cup
 \{  \cu_{\overline{\sigma}(1)},\ldots,   \cu_{\overline{\sigma}(p)}\},\\
 &\hat{c}_{\nu(p-k)}\in \mathcal{C}_{\nu(p-k)}=
 \{ \cl_{\underline{\nu}(p-k)}, \ldots, \cl_{\underline{\nu}(p-k+\Gamma)}\} \cup
 \{ \cu_{\overline{\nu}(1)},\ldots,  \cu_{\overline{\nu}(p-k)}\},\\
 & \hat{c}_{\varsigma(k)}\in  \mathcal{C}_{\varsigma(k)}=
 \{ \cl_{\underline{\varsigma}(k)},\ldots, \cl_{\underline{\varsigma}(k+\Gamma)}\}\cup
 \{ \cu_{\overline{\varsigma}(1)},\ldots,  
 \cu_{\overline{\varsigma}(k)}\}.
\end{align*}
For simplicity of notation, we write $\cl_{\underline{\sigma}(n)}$ instead of $\cl_{\underline{\sigma}(p+\Gamma)}$, when $p+\Gamma>n$.
The same holds for $\cl_{\underline{\nu}(p-k+\Gamma)}$ and
$\cl_{\underline{\varsigma}(k+\Gamma)}$.
Observe that we can assume that $\Gamma\leq n/2$. Indeed, if $\Gamma>n/2$, then it suffices to substitute 
variables~$z_i$ by $1-w_i$, $w_i \in \{0,1\}$, $i\in [n]$. Now
 the constraint $\sum_{i\in [n]} \delta_i \leq \Gamma$ and the costs~$\hat{c}_i$ become $\sum_{i\in[n]} w_i \geq n-\Gamma$
 and $\hat{c}_i=\cl_i+\cd_i(1-w_i)$, respectively.
From Proposition~\ref{propkk} and the above, it follows that $(\alpha,\beta)\in \mathcal{S}_{\pmb{x}}$, where
$ \mathcal{S}_{\pmb{x}}$ is defined as follows:
\begin{align*}
\mathcal{S}_{\pmb{x}}=&
 \{(\alpha,\beta) : \alpha=\hat{c}_{\sigma(p)}, \beta=0,  
    \hat{c}_{\nu(p-k)}\leq \hat{c}_{\sigma(p)},
   \hat{c}_{\sigma(p)}\in \mathcal{C}_{\sigma(p)},
   \hat{c}_{\nu(p-k)}\in \mathcal{C}_{\nu(p-k)}\}\cup\\
   &
   \{(\alpha,\beta) :  \alpha=\hat{c}_{\varsigma(k)}, \beta=\hat{c}_{\nu(p-k)}-\hat{c}_{\varsigma(k)},  
    \hat{c}_{\nu(p-k)}>\hat{c}_{\varsigma(k)},
   \hat{c}_{\nu(p-k)}\in \mathcal{C}_{\nu(p-k)},
   \hat{c}_{\varsigma(k)}\in \mathcal{C}_{\varsigma(k)}\}.\nonumber
\end{align*}
Finally,~(\ref{arecdisc}) becomes
\begin{equation}
\label{arecdisc2}
\begin{array}{llll}
\max \;\; &  \displaystyle p\alpha + (p-k) \beta - \sum_{i\in[n]}  [ \alpha + x_i \beta - \cl_i]_+ \\
& \displaystyle -\sum_{i\in[n]} ( [ \alpha + x_i \beta - \cd_i - \cl_i]_+ - [ \alpha + x_i \beta - \cl_i]_+) \delta_i \\
\text{s.t. } & \displaystyle \sum_{i\in[n]} \delta_i \leq \Gamma  \\
& \delta_i \in \{0,1\} &  i\in[n]  \\
& (\alpha,\beta)\in \mathcal{S}_{\pmb{x}} 
\end{array}
\end{equation}
Accordingly, it now suffices to solve~(\ref{arecdisc2}) for each  $(\alpha,\beta)\in \mathcal{S}_{\pmb{x}}$ and
choose the best of the computed solutions which encodes a worst scenario.
 Solving~(\ref{arecdisc2}) for fixed $(\alpha,\beta)$
can be done in $O(n)$.
Since the cardinality of the sets $\mathcal{C}_{\sigma(p)}$, $\mathcal{C}_{\nu(p-k)}$ and
$\mathcal{C}_{\varsigma(k)}$ is $O(n)$, the cardinality of the set $\mathcal{S}_{\pmb{x}}$ is
$O(n^2)$. This leads to the following theorem:
\begin{theorem}
The problem \textsc{AREC} under scenario set $\cU^d$ can be solved in $O(n^3)$ time.
\end{theorem}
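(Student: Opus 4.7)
The plan is to exploit the reformulation already derived in~(\ref{arecdisc2}): for any worst scenario there is a pair $(\alpha,\beta)\in\mathcal{S}_{\pmb{x}}$ attaining it, so it suffices to enumerate these pairs and, for each one, to solve the inner maximisation in $\pmb{\delta}$. The total running time is therefore the product of $|\mathcal{S}_{\pmb{x}}|$ and the per-pair cost, so the argument splits into two steps.

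First I would bound $|\mathcal{S}_{\pmb{x}}|$. Under the w.l.o.g.\ assumption $\Gamma\leq n/2$, each of the candidate sets $\mathcal{C}_{\sigma(p)}$, $\mathcal{C}_{\nu(p-k)}$, $\mathcal{C}_{\varsigma(k)}$ contains $O(n)$ values. The first component of $\mathcal{S}_{\pmb{x}}$ (the case $\beta=0$) contributes $O(n)$ pairs, while the second component is parameterised by $(\hat{c}_{\nu(p-k)},\hat{c}_{\varsigma(k)})\in\mathcal{C}_{\nu(p-k)}\times\mathcal{C}_{\varsigma(k)}$ and contributes $O(n^2)$ pairs. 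Hence $|\mathcal{S}_{\pmb{x}}|=O(n^2)$. The six sorted orderings needed to list $\mathcal{S}_{\pmb{x}}$ can be precomputed once in $O(n\log n)$ time.

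Next, for a fixed $(\alpha,\beta)\in\mathcal{S}_{\pmb{x}}$, program~(\ref{arecdisc2}) becomes
\begin{equation*}
K(\alpha,\beta) \;+\; \max\Bigl\{ \sum_{i\in[n]} w_i(\alpha,\beta)\,\delta_i \;:\; \sum_{i\in[n]}\delta_i\leq \Gamma,\; \delta_i\in\{0,1\} \Bigr\},
\end{equation*}
where $K(\alpha,\beta)$ is a constant and the weight
\[
w_i(\alpha,\beta)=[\alpha+x_i\beta-\cl_i]_+-[\alpha+x_i\beta-\cl_i-\cd_i]_+
\]
is nonnegative because $\cd_i\geq 0$. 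This is a \textsc{Selection}-type problem: choose $\Gamma$ indices maximising a sum of nonnegative weights, which a linear-time selection routine (see, e.g.,~\cite{CO90}) solves in $O(n)$ time.

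Combining the two steps yields the claimed bound $O(n^2)\cdot O(n)=O(n^3)$, with the $O(n\log n)$ preprocessing absorbed. The one point still to be verified---that a worst scenario corresponds to some $(\alpha,\beta)\in\mathcal{S}_{\pmb{x}}$---has already been established by the discussion preceding the theorem via Proposition~\ref{propkk}, and boundary cases such as $p+\Gamma>n$ are handled by the convention stated in the text. I do not expect any serious obstacle; the only care needed is to use a linear-time selection routine for the inner problem, since sorting inside the inner loop would inflate the complexity to $O(n^3\log n)$.
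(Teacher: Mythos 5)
Your proposal is correct and follows essentially the same route as the paper: enumerate the $O(n^2)$ candidate pairs $(\alpha,\beta)\in\mathcal{S}_{\pmb{x}}$ obtained from Proposition~\ref{propkk} and the candidate sets $\mathcal{C}_{\sigma(p)}$, $\mathcal{C}_{\nu(p-k)}$, $\mathcal{C}_{\varsigma(k)}$, and for each pair solve the inner problem in~(\ref{arecdisc2}) as a \textsc{Selection} instance in $O(n)$ time, giving $O(n^3)$ overall. Your explicit remarks on the nonnegativity of the weights and on using a linear-time selection routine rather than sorting in the inner loop are sound refinements of the paper's (terser) argument, not deviations from it.
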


\subsubsection{The recoverable robust problem}
\label{secrecrobdisc}

 We first identify some special cases of \textsc{RREC} which are polynomially solvable.
\begin{observation}
The following special cases of \textsc{RREC} under $\cU^d$ are polynomially solvable:
	\begin{enumerate}
		\item[(i)]  if $k=0$, then \textsc{RREC} is solvable in $O(n^2)$ time,
		\item[(ii)]  if $\Gamma=n$, then \textsc{RREC} is solvable in $O((p-k+1)n^2)$ time,
		\item[(iii)] if $k\geq \Gamma$ and $C_i=0$, $i\in [n]$, then \textsc{RREC} is solvable in $O(n)$ time.
	\end{enumerate}
\end{observation}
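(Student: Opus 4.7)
The plan is to verify each of the three items by reducing \textsc{RREC} to a problem whose polynomial solvability is already available.

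When $k=0$, the constraint $\sum_{i\in[n]} x_i y_i\ge p$ combined with $|X_{\pmb{x}}|=|X_{\pmb{y}}|=p$ forces $\pmb{y}=\pmb{x}$, collapsing the inner minimization. Thus \textsc{RREC} reduces to $\min_{\pmb{x}\in\Phi}\max_{\pmb{c}\in\cU^d}(\pmb{C}+\pmb{c})\pmb{x}$, i.e.\ the \textsc{MinMax Selection} problem on the shifted cost intervals $[C_i+\cl_i,\,C_i+\cu_i]$. Under $\cU^d$ the classical Bertsimas--Sim scheme solves this in $O(n^2)$ time: sort the $n$ candidate values of the $\Gamma$-th largest deviation taken in the selected set, and for each guess solve a deterministic selection in $O(n)$. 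This settles~(i).

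When $\Gamma=n$, the adversary may independently set $\delta_i=1$ for every $i\in[n]$, so $\cU^d$ collapses to the singleton $\{\pmb{\cu}\}$ and \textsc{RREC} becomes the deterministic recoverable selection $\min_{\pmb{x}\in\Phi}\pmb{C}\pmb{x}+\min_{\pmb{y}\in\Phi^k_{\pmb{x}}}\pmb{\cu}\pmb{y}$. This is a special case of \textsc{RREC} under interval uncertainty $\cU^I$ and is therefore solvable in $O((p-k+1)n^2)$ time by the algorithm of~\cite{KZ15b}, where the $+1$ also accommodates the boundary case $k=p$. This settles~(ii).

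For~(iii), with $\pmb{C}=\pmb{0}$ and $k\ge\Gamma$ I would establish the identity
\[
\mathrm{OPT}_{\textsc{RREC}} \;=\; V \;:=\; \max_{\pmb{c}\in\cU^d}\min_{\pmb{y}\in\Phi}\pmb{c}\pmb{y}.
\]
The inequality $\mathrm{OPT}\ge V$ is immediate from $\Phi^k_{\pmb{x}}\subseteq\Phi$. For the reverse inequality I would take $\pmb{x}^*$ to consist of any $p$ items with smallest $\cl_i$ and show, by a swap argument, that for every scenario obtained by inflating a set $T$ with $|T|\le\Gamma\le k$ the recoverer can choose $\pmb{y}\in\Phi^k_{\pmb{x}^*}$ of cost at most $V$: at most $|T\cap X_{\pmb{x}^*}|\le\Gamma\le k$ items of $X_{\pmb{x}^*}$ need to be exchanged, and since $X_{\pmb{x}^*}$ already contains the $p$ smallest nominals, the required replacements from $\overline{X}_{\pmb{x}^*}$ are no more expensive than those used in the unrestricted optimum. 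Finally, only items in the $(p+\Gamma)$-prefix of the $\cl$-sorted order can possibly affect $V$, so $V$ is computable in $O(n)$ via linear-time order-statistic selection combined with a direct evaluation of the adversary's optimal inflation on this small subinstance. The main obstacle is the reverse inequality: when several items tie for rank~$p$ in the $\cl$-order, the unrestricted optimum $\pmb{y}^*(\pmb{c})$ may share fewer than $p-k$ items with $X_{\pmb{x}^*}$, and a careful tie-breaking rule for $\pmb{x}^*$ (or the explicit construction of an alternative $\pmb{y}\in\Phi^k_{\pmb{x}^*}$ of cost at most $V$) will be required to close this gap.
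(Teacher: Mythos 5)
Your treatment of parts (i) and (ii) is correct and coincides with the paper's: (i) is the reduction to \textsc{MinMax} on the shifted intervals $[C_i+\cl_i,C_i+\cu_i]$, solved via the Bertsimas--Sim enumeration in $O(n^2)$, and (ii) reduces to \textsc{RREC} under $\cU^I$. One small imprecision in (ii): for $\Gamma=n$ the set $\cU^d$ does not literally collapse to the singleton $\{(\cu_i)_{i\in[n]}\}$ --- it becomes all $2^n$ extreme points of the box --- but since the inner minimum is componentwise nondecreasing in $\pmb{c}$, the adversary's optimum is attained at the all-upper-bounds scenario, which is what your argument actually uses.

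For part (iii) your plan is again exactly the paper's plan: take $\pmb{x}^*$ to be $p$ items of smallest nominal cost and argue that under every scenario some unrestricted optimum already lies in $\Phi^k_{\pmb{x}^*}$, so that $\mathrm{OPT}=V$. However, you stop short of proving the one claim on which everything rests --- that the scenario optimum can be chosen to share at least $p-\Gamma\ge p-k$ items with $X_{\pmb{x}^*}$ --- and you explicitly label it ``the main obstacle.'' That is a genuine gap in your writeup, though it closes with a short exchange/tie-breaking argument: fix the tie-break for $\pmb{x}^*$ so that $\cl_i\le\cl_j$ whenever $i\in X_{\pmb{x}^*}$ and $j\in\overline{X}_{\pmb{x}^*}$; for a scenario inflating a set $T$ with $|T|\le\Gamma$, let $Y$ consist of the $p$ cheapest items under $\pmb{c}$ with ties broken in favour of $X_{\pmb{x}^*}$. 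Every non-inflated $i\in X_{\pmb{x}^*}$ satisfies $c_i=\cl_i\le\cl_j\le c_j$ for all $j\in\overline{X}_{\pmb{x}^*}$, hence precedes every element of $\overline{X}_{\pmb{x}^*}$ in this order; if $Y$ omitted such an $i$, then $Y$ would consist only of elements of $X_{\pmb{x}^*}$ and, having cardinality $p$, would equal $X_{\pmb{x}^*}$, a contradiction. So $Y$ contains all of the at least $p-|T\cap X_{\pmb{x}^*}|\ge p-\Gamma\ge p-k$ non-inflated items of $X_{\pmb{x}^*}$, i.e.\ $Y\in\Phi^k_{\pmb{x}^*}$. (The paper asserts this overlap property in a single sentence.) Finally, your additional claim that the optimal value $V$ is computable in $O(n)$ is neither needed nor justified: restricting to a $(p+\Gamma)$-prefix does not shrink the instance when $p+\Gamma=\Theta(n)$, and the adversarial subproblem is only shown to be solvable in $O(n^2)$ (or $O(n\log n)$) in the paper. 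The $O(n)$ bound in the observation refers to producing the optimal first-stage solution $\pmb{x}^*$, which is a linear-time order-statistic computation.
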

\begin{proof}
\begin{enumerate}
\item[(i)] If $k=0$, then \textsc{RREC} is equivalent to the \textsc{MinMax} problem under scenario set $\cU^d$ with the cost intervals $[C_i+\cl_i, C_i+\cu_i]$, $i\in [n]$. This problem is solvable in $O(n^2)$ according to the results obtained in~\cite{BS03}.

\item[(ii)] If $\Gamma=n$, then \textsc{RREC} can be reduced to the recoverable robust problem under scenario set $\cU^I$, which can be solved in $\mathcal{O}((p-k+1)n^2)$ time~\cite{KZ15b}.

\item[(iii)] Consider the case $k\geq \Gamma$.
 Let $\pmb{x}^*\in \Phi$ be an optimal solution to the \textsc{Selection} problem for the costs~$\cl_i$, $i\in[n]$,
 and $\pmb{\delta}\in\{0,1\}^n$ stands for  any vector  that
 encodes scenario $\cl_i+\delta_i\cd_i$, $i\in[n]$,  $\sum_{i\in[n]}\delta_i\leq\Gamma$.
 Let $\pmb{y}$ be an optimal solution to the \textsc{Selection} problem
 with respect to~$\cl_i+\delta_i\cd_i$, $i\in[n]$. 
Now  $\pmb{x}^*$ and $\pmb{y}$ have at least $p-\Gamma$ elements in common,
which is due to the fact that $\sum_{i\in[n]}\delta_i\leq\Gamma$.
 Since $k\geq \Gamma$,  $\pmb{x}^*$ can be recovered to~$\pmb{y}$.
 Hence, no better
solution can exist.
\end{enumerate}
\end{proof}

We will now construct a compact MIP formulation for the general \textsc{RREC} problem under scenario set $\cU^d$. In order to do this we will use the formulation~(\ref{arecdisc2}). Observe  that  the sets 
 $\mathcal{C}_{\nu(p-k)}$ and $\mathcal{C}_{\varsigma(k)}$, defined in Section~\ref{secadvdisc}, depend on a fixed solution $\pmb{x}$ (the set $\mathcal{C}_{\sigma(p)}$ does not depend on $\pmb{x}$).
We now define $\pmb{x}$-independent sets
of possible values
of the $(p-k)$th  smallest  element in $X_{\pmb{x}}$ and the $k$th  smallest element in $\overline{X}_{\pmb{x}}$ under any scenario in $\cU^d$ by
\begin{align*}
 &\hat{c}_{\sigma(p-k)}\in \mathcal{C}_{\sigma(p-k)}=
 \{ \cl_{\underline{\sigma}(p-k)},\ldots, \cl_{\underline{\sigma}(n-k+\Gamma)}\}\cup
 \{  \cu_{\overline{\sigma}(1)},\ldots,   \cu_{\overline{\sigma}(n-k)}\},\\
 &\hat{c}_{\sigma(k)}\in \mathcal{C}_{\sigma(k)}=
 \{ \cl_{\underline{\sigma}(k)},\ldots, \cl_{\underline{\sigma}(k+p+\Gamma)}\}\cup
 \{  \cu_{\overline{\sigma}(1)},\ldots,   \cu_{\overline{\sigma}(k+p)}\}.
 \end{align*}
 Again, from Proposition~\ref{propkk} and the above, we get a new set of relevant values
 of~$\alpha$ and~$\beta$
  \begin{align*}
\mathcal{S}=&
 \{(\alpha,\beta) : \alpha=\hat{c}_{\sigma(p)}, \beta=0,  
    \hat{c}_{\sigma(p-k)}\leq \hat{c}_{\sigma(p)},
   \hat{c}_{\sigma(p)}\in \mathcal{C}_{\sigma(p)},
   \hat{c}_{\sigma(p-k)}\in \mathcal{C}_{\sigma(p-k)}\}\cup\\
   &
   \{(\alpha,\beta) :  \alpha=\hat{c}_{\sigma(k)}, \beta=\hat{c}_{\sigma(p-k)}-\hat{c}_{\sigma(k)},  
    \hat{c}_{\sigma(p-k)}>\hat{c}_{\sigma(k)},
   \hat{c}_{\sigma(p-k)}\in \mathcal{C}_{\sigma(p-k)},
   \hat{c}_{\sigma(k)}\in \mathcal{C}_{\sigma(k)}\}.\nonumber
\end{align*}
Obviously 
 $\mathcal{C}_{\nu(p-k)}\subseteq   \mathcal{C}_{\sigma(p-k)}$,
 $\mathcal{C}_{\varsigma(k)}\subseteq  \mathcal{C}_{\sigma(k)}$,
$\mathcal{S}_{\pmb{x}}\subseteq \mathcal{S}$ for any $\pmb{x}\in \Phi$ and the cardinality of~$\mathcal{S}$ remains~$\mathcal{O}(n^2)$. Let us represent the adversarial problem~(\ref{arecdisc2}) as follows:
\begin{equation}
\label{arecdisc3}
\begin{array}{llll}
\displaystyle \max_{(\alpha, \beta)\in \mathcal{S}} \max_{\pmb{\delta}}  &  \displaystyle p\alpha + (p-k) \beta - \sum_{i\in[n]}  [ \alpha + x_i \beta -\cl_i]_+ + \\
& \displaystyle  \sum_{i\in[n]} ([ \alpha + x_i \beta -\cl_i]_+-[ \alpha + x_i \beta - \cd_i -\cl_i]_+) \delta_i \\
\text{s.t. } & \displaystyle \sum_{i\in[n]} \delta_i \leq\Gamma  \\
& \delta_i \in \{0,1\} &  i\in[n]  
\end{array}
\end{equation}
Dualizing the inner \textsc{Selection} problem in~(\ref{arecdisc3}),  we get:
\begin{align*}
\max_{(\alpha,\beta) \in\mathcal{S}} \min_{\pi,\pmb{\rho}} \ &\Gamma \pi + \sum_{i\in[n]} \rho_i + p\alpha + (p-k)\beta - \sum_{i\in[n]} [ \alpha + x_i \beta - \cl_i]_+ \\
\text{s.t. } & \pi + \rho_i \ge [\alpha + x_i \beta - \cl_i]_+ - [\alpha + x_i\beta - \cl_i - \cd_i]_+ &  i\in[n] \\
& \rho_i \ge 0 &  i\in[n]\\
&\pi\geq 0&
\end{align*}
 For every pair $(\alpha^\ell,\beta^\ell)\in\mathcal{S}$, we introduce a set of variables $\pi^\ell$, $\pmb{\rho}^\ell$, $\ell\in [S]$ with $S = |\mathcal{S}|$. The \textsc{RREC} problem under scenario set $\cU^d$ is then equivalent to the following mathematical programming problem:
\begin{align*}
\min\ & \lambda \\
\text{s.t. } & \lambda
 \ge
 \sum_{i\in[n]} C_ix_i+
  \Gamma \pi^\ell + \sum_{i\in[n]} \rho^\ell_i + p\alpha^\ell + (p-k)\beta^\ell - \sum_{i\in[n]} [ \alpha^\ell + x_i \beta^\ell - \cl_i ]_+ &  \ell \in [S] \\
& \pi^\ell + \rho^\ell_i \ge [\alpha^\ell + x_i \beta^\ell - \cl_i]_+ - [\alpha^\ell + x_i\beta^\ell - \cl_i - \cd_i]_+ &  i\in[n], \ell\in[S] \\
& \sum_{i\in[n]} x_i = p \\
& x_i\in\{0,1\} &  i\in[n] \\
& \rho^\ell_i \ge 0 &  \ell\in[S],i\in[n]\\
& \pi^\ell \ge 0 &  \ell\in[S]
\end{align*}
Finally, we can linearize all the nonlinear terms $[a+bx_i]_+=[a]_{+} + ([a+b]_{+}-[a]_+)x_i$, where $a,b$ are constant. In consequence, we obtain a compact MIP formulation for \textsc{RREC}, with $O(n^3)$ variables and $O(n^3)$ constraints.

We now present an approximation algorithm, which can be applied for larger problem instances.
Suppose that $\cl_i\geq \alpha \cu_i$ for each item $i\in [n]$, where $\alpha\in (0,1]$ is a constant.  
This inequality means that for each item $i$ the nominal cost~$\cl_i$ is positive and $\cu_i$  is at most $1/\alpha$ greater than $\cl_i$. It is reasonable to assume that this condition will be true in many practical applications for not very large value of $1/\alpha$. Consider scenario set $\cU'=\{(\cl_i)_{i\in [n]}\}$, so $\cU'$ contains only one scenario composed of the nominal item costs. Let $\hat{\pmb{x}}$ be an optimal solution to the \textsc{RREC} problem under $\cU'$. This solution can be computed in polynomial time~\cite{KZ15b}. Using the same reasoning as in~\cite{HKZ16a}, one can show that the cost of $\hat{\pmb{x}}$ is at most $1/\alpha$ greater than the optimum. Hence there is an $1/\alpha$ approximation algorithm for \textsc{RREC} under scenario set $\cU^d$.

\subsection{Two-Stage Robust Selection}

\subsubsection{The adversarial problem}
 
Let $\pmb{x}\in \Phi_1$ be a fixed first stage solution, with $|X_{\pmb{x}}|=p_1$.
Using the same reasoning as in Section~\ref{sec32contia}, we can represent the \textsc{A2ST} problem as a special case of \textsc{AREC} with $\tilde{p}=p-p_1$ and $k=\tilde{p}$. In this case $\beta=0$ in the formulation~(\ref{arecdisc0}) and there are only $O(n)$ candidate values for $\alpha$. Hence the problem can be solved in $O(n^2)$ time under scenario set~$\cU^d$. 
 
We now show some additional properties of the adversarial problem, which will then be
used to solve the more general \textsc{R2ST} problem.
By dualizing the linear programming relaxation of the incremental problem (\ref{inc2stc}), we find the following MIP formulation for the adversarial problem \textsc{A2ST}:
\begin{equation}
\label{advdisc}
\begin{array}{lllll}
\max\ & (p - \displaystyle \sum_{i\in[n]} x_i) \alpha - \sum_{i\in[n]} (1-x_i) \gamma_i \\
\text{s.t. } & \alpha \le \gamma_i + \cl_i + \cd_i\delta_i & i\in[n] \\
& \displaystyle \sum_{i\in[n]} \delta_i \le \Gamma \\
& \gamma_i \ge 0 & i\in [n]\\
& \delta_i \in\{0,1\} & i\in [n]
\end{array}
\end{equation}
The following lemma characterizes an optimal solution to~(\ref{advdisc}):
\begin{lemma}\label{lem22}
There is an optimal solution to~\eqref{advdisc} in which $\alpha=0$, $\alpha= \cl_j$ or $\alpha=\cu_j$ for some $j\in[n]$.
\end{lemma}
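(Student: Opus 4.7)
The plan is to reduce the problem to maximizing a one-dimensional concave piecewise linear function of $\alpha$ and observe that its maximum is attained either at a breakpoint (which is of the form $\cl_j$ or $\cu_j$) or at $0$.

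First I would eliminate the variables $\gamma_i$. For any fixed $\alpha$ and $\pmb{\delta}$, since the coefficient of $\gamma_i$ in the objective is $-(1-x_i)\le 0$ and $\gamma_i$ only appears in the constraint $\gamma_i \ge \alpha - \cl_i - \cd_i\delta_i$ together with $\gamma_i \ge 0$, the optimal choice is $\gamma_i = [\alpha - \cl_i - \cd_i\delta_i]_+$. For $i\in X_{\pmb{x}}$ this term does not enter the objective, so (\ref{advdisc}) reduces to
\begin{equation*}
\max_{\pmb{\delta}}\max_{\alpha\in\Rset}\; f(\alpha,\pmb{\delta}) = \tilde p\,\alpha - \sum_{i\in \overline{X}_{\pmb{x}}}\bigl[\alpha - (\cl_i + \cd_i\delta_i)\bigr]_+,
\end{equation*}
subject to $\sum_{i\in[n]}\delta_i\le\Gamma$ and $\delta_i\in\{0,1\}$, where $\tilde p = p - \sum_{i\in[n]} x_i\ge 0$.

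Next, for any fixed feasible $\pmb{\delta}$, the function $\alpha\mapsto f(\alpha,\pmb{\delta})$ is concave and piecewise linear: its slope starts at $\tilde p$ on $(-\infty,\min_i(\cl_i+\cd_i\delta_i)]$ and decreases by one each time $\alpha$ crosses a breakpoint of the form $\cl_i+\cd_i\delta_i$ with $i\in\overline{X}_{\pmb{x}}$, ultimately becoming $\tilde p - |\overline{X}_{\pmb{x}}| = p - n \le 0$. If $\alpha<0$, replacing $\alpha$ by $0$ does not decrease $f$ (the first term weakly increases and the second does not change as long as all breakpoints are nonnegative, which follows from $\cl_i\ge 0$). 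Therefore we may restrict attention to $\alpha\ge 0$, and the maximum of the concave piecewise linear function is attained either at $\alpha = 0$ or at one of its breakpoints.

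Finally, each breakpoint equals $\cl_i + \cd_i\delta_i$ for some $i\in\overline{X}_{\pmb{x}}\subseteq[n]$, which is exactly $\cl_i$ when $\delta_i=0$ and $\cu_i$ when $\delta_i=1$. This proves that there exists an optimal $\alpha\in\{0\}\cup\{\cl_j:j\in[n]\}\cup\{\cu_j:j\in[n]\}$. The main (minor) technical point is handling the boundary cases $\tilde p=0$ and $\alpha<0$, both of which are resolved by the observation that $\alpha=0$ is always feasible and yields objective value $0$ when no breakpoint produces a better value.
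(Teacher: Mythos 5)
Your proof is correct, but it reaches the conclusion by a genuinely different route than the paper. The paper also fixes $\pmb{\delta}$ first, but then rewrites the resulting linear program in equality form with slack variables $\epsilon_i$ and invokes the existence of a basic optimal solution: with $n$ constraints and $2n+1$ variables, either $\alpha$ is nonbasic (hence $\alpha=0$) or some index $j$ has both $\epsilon_j$ and $\gamma_j$ nonbasic, which forces $\alpha=\cl_j+\cd_j\delta_j\in\{\cl_j,\cu_j\}$; it must separately justify restricting to $\alpha\ge 0$ by observing that the cardinality constraint in~(\ref{inc2stc}) can be replaced by an inequality without changing the optimal set. You instead eliminate $\pmb{\gamma}$ explicitly (correctly noting that the constraints for $i\in X_{\pmb{x}}$ are vacuous because those $\gamma_i$ have zero objective coefficient) and reduce to maximizing the univariate concave piecewise-linear function $\tilde p\,\alpha-\sum_{i\in\overline{X}_{\pmb{x}}}[\alpha-(\cl_i+\cd_i\delta_i)]_+$, whose maximizers lie at a breakpoint or at $0$. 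Your argument is more elementary (no appeal to basic feasible solutions), makes the structure of the objective in $\alpha$ transparent --- which is also what underlies the subsequent reduction to the inner \textsc{Selection} problem~(\ref{advdisc2}) --- and handles the sign of $\alpha$ directly by showing $\alpha=0$ weakly dominates any negative $\alpha$, rather than via the primal reformulation remark. The slope bookkeeping (initial slope $\tilde p\ge 0$, terminal slope $p-n\le 0$, nonnegative breakpoints since $\cl_i\ge 0$) is all in order, so the boundary cases $\tilde p=0$ and $p=n$ are covered.
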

\begin{proof}
Let us fix $\pmb{\delta}$ in~(\ref{advdisc}). Then the resulting linear program with additional slack variables is 
\begin{align*}
\max\ & (p - \displaystyle \sum_{i\in[n]} x_i) \alpha - \sum_{i\in[n]} (1-x_i) \gamma_i \\
\text{s.t. } & \alpha + \epsilon_i - \gamma_i = \cl_i + \cd_i\delta_i & i\in[n] \\
& \alpha \ge 0 \\
& \epsilon_i, \gamma_i \ge 0 & i\in [n]
\end{align*}
Note that we only consider nonnegative values of dual variable~$\alpha$
associated with  the cardinality constraint  in (\ref{inc2stc}), since replacing this constraint in (\ref{inc2stc})
by  $\sum_{i\in[n]} (y_i + x_i )\geq p$ does not change the set of optimal solutions.
The problem has $2n+1$ variables and $n$ constraints. If $\alpha$ is a non-basis variable in an optimal solution, then $\alpha=0$. So, let us assume that $\alpha$ is a basis variable. As there are $n-1$ remaining basis variables, there is at least one $j\in[n]$ where both $\epsilon_j$ and $\gamma_j$ are non-basis variables. Hence, $\alpha = \cl_j + \cd_j\delta_j$ and the lemma follows since $\delta_j\in \{0,1\}$.
\end{proof}
Define $\mathcal{S} = \{ 0 \} \cup \{ \cl_i : i\in[n] \} \cup \{ \cu_i : i\in[n] \}$
and write $\mathcal{S} = \{ \alpha^1, \ldots, \alpha^S\}$ with $S=|\mathcal{S}|\in O(n)$.
Using Lemma~\ref{lem22}, problem~(\ref{advdisc}) is then equivalent to
\begin{align*}
\max_{\alpha\in\mathcal{S}} \max_{\pmb{\delta},\pmb{\gamma}} \ & (p - \sum_{i\in[n]} x_i) \alpha - \sum_{i\in[n]} (1-x_i) \gamma_i \\
\text{s.t. } & \gamma_i \ge \alpha - \cl_i - \cd_i\delta_i \\
& \sum_{i\in[n]} \delta_i \le \Gamma \\
& \gamma_i \ge 0 & i\in [n]\\
& \delta_i \in\{0,1\} & i\in[n]
\end{align*}
Let $(\pmb{\delta}^*,\pmb{\gamma}^*)$ be an optimal solution to the inner maximization problem. Note that we can assume that 
if $\delta^*_i = 0$, then $\gamma^*_i = [\alpha-\cl_i]^+$  and if $\delta^*_i=1$, then $\gamma^*_i = [\alpha-\cl_i-\cd_i]^+$. Hence, the inner problem is equivalent to
\begin{equation}
\label{advdisc2}
\begin{array}{lllll}
\max\ &(p-\sum_{i\in[n]} x_i))\alpha - \sum_{i\in[n]} (1-x_i) [\alpha-\cl_i]^+ \\
 & + (1-x_i)([\alpha-\cl_i-\cd_i]^+ - [\alpha-\cl_i]^+)\delta_i \\
\text{s.t. } & \sum_{i\in[n]} \delta_i \le \Gamma \\
& \delta_i\in\{0,1\} & i\in[n]
\end{array}
\end{equation}
As this is the \textsc{Selection} problem, we state the following result:

\begin{theorem}
The problem A2ST under scenario set $\cU^d$ can be solved in $O(n^2)$ time.
\end{theorem}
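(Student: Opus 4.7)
The plan is to combine the structural result of Lemma~\ref{lem22} (which restricts the dual variable $\alpha$ to a polynomial-size candidate set) with the observation that formulation~(\ref{advdisc2}) is just the \textsc{Selection} problem in the variables $\pmb{\delta}$.

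First, I would enumerate the candidate set $\mathcal{S}=\{0\}\cup\{\cl_i:i\in[n]\}\cup\{\cu_i:i\in[n]\}$, which has $|\mathcal{S}|\in O(n)$, and sort/prepare it once in $O(n\log n)$ time (or simply process it in $O(n)$ passes). By Lemma~\ref{lem22} an optimal $\alpha$ for~(\ref{advdisc}) lies in $\mathcal{S}$, so it is sufficient to solve the inner maximization over $\pmb{\delta}$ for each $\alpha\in\mathcal{S}$ and take the best.

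For a fixed $\alpha$ the constant term $(p-\sum_{i\in[n]} x_i)\alpha - \sum_{i\in[n]}(1-x_i)[\alpha-\cl_i]^+$ in~(\ref{advdisc2}) can be evaluated in $O(n)$ time. What remains is
\[
\max_{\pmb{\delta}}\; \sum_{i\in[n]}  a_i(\alpha)\, \delta_i \;\text{ s.t. }\; \sum_{i\in[n]}\delta_i\le \Gamma,\ \delta_i\in\{0,1\},
\]
where $a_i(\alpha)=(1-x_i)\bigl([\alpha-\cl_i-\cd_i]^+-[\alpha-\cl_i]^+\bigr)$. Each coefficient $a_i(\alpha)$ is computable in $O(1)$ once $\alpha$ is fixed, so the full coefficient vector is obtained in $O(n)$ time. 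This is a classical \textsc{Selection} problem: pick the $\Gamma$ largest $a_i(\alpha)$ values (ignoring nonpositive ones, which should not be chosen), solvable in $O(n)$ time using linear-time selection as recalled in the introduction.

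Hence each of the $O(n)$ values of $\alpha$ is processed in $O(n)$ time, giving total running time $O(n^2)$. The resulting optimal pair $(\alpha^*,\pmb{\delta}^*)$ gives a worst-case scenario $\hat{\pmb{c}}=(\cl_i+\cd_i\delta^*_i)_{i\in[n]}$ for the first-stage choice $\pmb{x}$, and the incremental problem on this scenario returns the \textsc{A2ST} value. The only subtlety — and essentially the only verification needed — is confirming that the inner problem over $\pmb{\delta}$ is genuinely decoupled once $\alpha$ is fixed (which is immediate from~(\ref{advdisc2})) and that rounding to $\pmb{\delta}\in\{0,1\}^n$ is correctly handled by the linear-time selection procedure, which follows because the objective is linear in $\pmb{\delta}$ with a single cardinality constraint.
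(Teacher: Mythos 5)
Your proposal is correct and follows essentially the same route as the paper: Lemma~\ref{lem22} restricts $\alpha$ to the $O(n)$-element candidate set $\mathcal{S}$, and for each fixed $\alpha$ the inner maximization over $\pmb{\delta}$ in~(\ref{advdisc2}) is a cardinality-constrained linear problem, i.e.\ a \textsc{Selection} instance solvable in $O(n)$ time, giving $O(n^2)$ overall. The only point to watch is the sign of the coefficients: the adversary's gain from setting $\delta_i=1$ is $(1-x_i)\bigl([\alpha-\cl_i]^+-[\alpha-\cl_i-\cd_i]^+\bigr)\ge 0$, so it is the $\Gamma$ largest of these nonnegative quantities (not of their negatives, as your literal $a_i(\alpha)$ would suggest) that should be selected.
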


\subsubsection{The two-stage robust problem} 

The \textsc{R2ST} problem is polynomially solvable when $\Gamma\geq n$. Scenario set $\cU^d$ can be then replaced by $\cU^I$ and the problem is solvable in $O(n)$ time~\cite{KZ15b}.

We now present a compact MIP formulation for \textsc{R2ST} under scenario set $\cU^d$. 
We can use the dual of the linear relaxation of $\eqref{advdisc2}$ and the set $\mathcal{S}$ of candidate values for $\alpha$ to arrive at the following formulation:
\begin{align*}
\min\ & \lambda \\
\text{s.t. } & \lambda \ge \sum_{i\in[n]} C_ix_i + (p-\sum_{i\in[n]} x_i) \alpha^\ell - \sum_{i\in[n]} (1-x_i) [\alpha^\ell-\cl_i]^+ + \Gamma \pi^\ell + \sum_{i\in[n]} \rho^\ell_i & \ell\in[S] \\
& \sum_{i\in[n]} x_i \le p \\
& \pi^\ell + \rho^\ell_i \ge (1-x_i) ([\alpha^\ell-\cl_i]^+ - [\alpha^\ell-\cl_i-\cd_i]^+) & i\in[n], \ell\in[S]\\
& x_i \in\{0,1\} & i\in [n]\\
& \pi^\ell \ge 0 & \ell \in [S] \\
& \rho^\ell_i \ge 0 & i\in[n], \ell\in[S]
\end{align*}
This formulation has $O(n^2)$ variables and $O(n^2)$ constraints.

We now propose a fast approximation algorithm for the problem. The idea is the same as for the robust recoverable problem (see Section~\ref{secrecrobdisc} and also~\cite{HKZ16a}). Let us fix scenario $\underline{\pmb{c}}=(\cl_i)_{i\in [n]}\in \cU^d$, which is composed of the nominal item costs.  Consider the following problem:
\begin{equation}
\label{appr1}
\min_{\pmb{x}\in \Phi_1} (\pmb{C}\pmb{x}+\min_{\pmb{y}\in \Phi_{\pmb{x}}} \underline{\pmb{c}}\pmb{y}).
\end{equation}
Problem~(\ref{appr1}) can be solved in $O(n)$ time in the following way. Let $c_i'=\min\{C_i, \cl_i\}$ for each $i\in [n]$ and let $\pmb{x}\in\Phi$ be an optimal solution to the \textsc{Selection} problem for the costs $c_i'$. The optimal solution $\hat{\pmb{x}}\in \Phi_1$ to~(\ref{appr1}) is then formed by fixing $\hat{x}_i=1$ if $x_i=1$ and $c_i'=C_i$, and $\hat{x}_i=0$ otherwise. We now prove the following result:
\begin{proposition}
\label{propappr}
	Let $\cl_i\geq \alpha \cu_i$ for each $i\in [n]$ and $\alpha\in (0,1]$.  Let $\hat{\pmb{x}} \in \Phi_1$ be an optimal solution to~(\ref{appr1}). Then $\hat{\pmb{x}}$ is an $\frac{1}{\alpha}$- approximate solution to \textsc{R2ST}.
\end{proposition}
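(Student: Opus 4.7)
The plan is to compare the \textsc{R2ST} cost of $\hat{\pmb{x}}$ against both the optimal value $Z^{*}$ of~(\ref{appr1}) and the \textsc{R2ST} optimum $OPT$, and to chain two inequalities. The nominal scenario $\underline{\pmb{c}}=(\cl_i)_{i\in[n]}$ belongs to $\cU^d$ (take all $\delta_i=0$), so~(\ref{appr1}) is exactly the \textsc{R2ST} objective restricted to this single scenario. The hypothesis $\cl_i\geq\alpha\cu_i$ together with $\alpha\in(0,1]$ will let us bound the worst-case second-stage cost of $\hat{\pmb{x}}$ by $1/\alpha$ times its nominal second-stage cost.

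First I would show that $Z^{*}\leq OPT$. Let $\pmb{x}^{*}\in\Phi_1$ be optimal for \textsc{R2ST} and let $\pmb{y}^{*}\in\Phi_{\pmb{x}^{*}}$ be an optimal completion under $\underline{\pmb{c}}$. Since $\underline{\pmb{c}}\in\cU^d$,
\[
\underline{\pmb{c}}\pmb{y}^{*}=\min_{\pmb{y}\in\Phi_{\pmb{x}^{*}}}\underline{\pmb{c}}\pmb{y}\ \leq\ \max_{\pmb{c}\in\cU^d}\min_{\pmb{y}\in\Phi_{\pmb{x}^{*}}}\pmb{c}\pmb{y},
\]
hence $\pmb{C}\pmb{x}^{*}+\underline{\pmb{c}}\pmb{y}^{*}\leq OPT$. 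By optimality of $\hat{\pmb{x}}$ for~(\ref{appr1}), $Z^{*}\leq \pmb{C}\pmb{x}^{*}+\underline{\pmb{c}}\pmb{y}^{*}\leq OPT$.

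Next I would bound the \textsc{R2ST} cost of $\hat{\pmb{x}}$ by $Z^{*}/\alpha$. Let $\hat{\pmb{y}}\in\Phi_{\hat{\pmb{x}}}$ be the nominal optimal completion, so $Z^{*}=\pmb{C}\hat{\pmb{x}}+\underline{\pmb{c}}\hat{\pmb{y}}$. For every $\pmb{c}=(\cl_i+\delta_i \cd_i)_{i\in[n]}\in\cU^d$ we have $c_i\leq\cu_i\leq\cl_i/\alpha$, so $\pmb{c}\hat{\pmb{y}}\leq(1/\alpha)\underline{\pmb{c}}\hat{\pmb{y}}$ and therefore
\[
\max_{\pmb{c}\in\cU^d}\min_{\pmb{y}\in\Phi_{\hat{\pmb{x}}}}\pmb{c}\pmb{y}\ \leq\ \max_{\pmb{c}\in\cU^d}\pmb{c}\hat{\pmb{y}}\ \leq\ \tfrac{1}{\alpha}\underline{\pmb{c}}\hat{\pmb{y}}.
\]
Combining with $\pmb{C}\hat{\pmb{x}}\leq(1/\alpha)\pmb{C}\hat{\pmb{x}}$ (since $\alpha\leq1$ and $\pmb{C}\geq\pmb{0}$) gives
\[
\pmb{C}\hat{\pmb{x}}+\max_{\pmb{c}\in\cU^d}\min_{\pmb{y}\in\Phi_{\hat{\pmb{x}}}}\pmb{c}\pmb{y}\ \leq\ \tfrac{1}{\alpha}\bigl(\pmb{C}\hat{\pmb{x}}+\underline{\pmb{c}}\hat{\pmb{y}}\bigr)=\tfrac{Z^{*}}{\alpha}\leq\tfrac{OPT}{\alpha},
\]
which is the claim. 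There is no real obstacle here; the only subtlety is noticing that nonnegativity of $\pmb{C}$ (which is implicit in the setting) is needed to absorb the first-stage cost into the $1/\alpha$ factor, and that the nominal scenario lies in $\cU^d$ so that the inequality $Z^{*}\leq OPT$ follows by evaluating the worst case at a single feasible scenario.
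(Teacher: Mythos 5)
Your proposal is correct and follows essentially the same chain of inequalities as the paper's proof: both compare the \textsc{R2ST} cost of $\hat{\pmb{x}}$ to the nominal-scenario optimum and then to $OPT$, using $\underline{\pmb{c}}\leq\pmb{c}\leq\overline{\pmb{c}}\leq\underline{\pmb{c}}/\alpha$ and the nonnegativity of $\pmb{C}$ to absorb the factor $\alpha$ (the paper multiplies by $\alpha$ where you divide by it, and evaluates $\pmb{y}^*$ under the worst-case rather than the nominal scenario, but these are cosmetic differences). Your explicit remark that nonnegativity of $\pmb{C}$ is needed is a point the paper leaves implicit.
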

\begin{proof}
	Let $\pmb{x}^*$ be an optimal solution to \textsc{R2ST} with the objective value $OPT$. It holds
	$$OPT=\pmb{C}\pmb{x}^*+\max_{\pmb{c}\in \cU^d}\min_{\pmb{y}\in \Phi_{\pmb{x}^*}} \pmb{c}\pmb{y}=\pmb{C}\pmb{x}^*+\pmb{c}^*\pmb{y}^*\geq \pmb{C}\pmb{x}^*+\underline{\pmb{c}}\pmb{y}^*,$$
	because $\cl_i\leq c^*_i$ for each $i\in [n]$. Since $\hat{\pmb{x}}$ is an optimal solution to~(\ref{appr1}) we get
	$$\pmb{C}\pmb{x}^*+\underline{\pmb{c}}\pmb{y}^*\geq \pmb{C}\hat{\pmb{x}}+\underline{\pmb{c}}\hat{\pmb{y}},$$
	where $\hat{\pmb{y}}=\min_{\pmb{y}\in \Phi_{\hat{\pmb{x}}}} \underline{\pmb{c}}\pmb{y}$.
	By the assumption that $\cl_i\geq \alpha \cu_i$ for each $i\in [n]$, we obtain
	$$\pmb{C}\hat{\pmb{x}}+\underline{\pmb{c}}\hat{\pmb{y}}\geq \pmb{C}\hat{\pmb{x}} + \alpha \overline{\pmb{c}}\hat{\pmb{y}}.$$
	Finally, as $\alpha\in (0,1]$
	$$ \pmb{C}\hat{\pmb{x}} + \alpha \overline{\pmb{c}}\hat{\pmb{y}} \geq \alpha ( \pmb{C}\hat{\pmb{x}} +  \overline{\pmb{c}}\hat{\pmb{y}})\geq \alpha (\pmb{C}\hat{\pmb{x}}+\max_{\pmb{c}\in \cU^d} \pmb{c}\hat{\pmb{y}})\geq \alpha (\pmb{C}\hat{\pmb{x}}+\max_{\pmb{c}\in \cU^d}\min_{\pmb{y}\in \Phi_{\hat{\pmb{x}}}} \pmb{c}\pmb{y})$$
	and the proposition follows.
\end{proof}

\section{Conclusion}

\label{sec5}
The \textsc{Selection} problem is one of the main objects of study for the complexity of robust optimization problems. While the robust counterpart of most combinatorial optimization problems is NP-hard, 
its simple structure allows in many cases to construct efficient polynomial algorithms. As an example, the \textsc{MinMax-Regret Selection} problem was the first \textsc{MinMax-Regret} problem for which polynomial time solvability could be proved \cite{A01}.

In this paper we continue this line of research by considering
recoverable and two-stage robust problems combined with discrete and budgeted uncertainty sets. All four problem combinations have not been analyzed before, and little is known about other problems of this kind.

We showed that the continuous uncertainty problem variants allow polynomial-time solution algorithms, based on solving a set of linear programs. Additionally, we derived strongly polynomial combinatorial algorithms for the adversarial subproblems and discussed ways to preprocess instances.

For the discrete uncertainty case, we also presented strongly polynomial combinatorial algorithms for the adversarial problems, and constructed mixed-integer programming formulations of polynomial size. It remains an open problem to analyze in future research if the problems with discrete uncertainty are NP-hard or also allow for polynomial-time solution algorithms.

Further research includes the application of our setting to other combinatorial optimization problems, such as \textsc{Spanning Tree} or \textsc{Shortest Path}.

\appendix

\section{An Improved Algorithm for  \textsc{AREC}}
\label{sec:appendix}

We now discuss how the $O(n^2)$ result from Section~\ref{secARECc} can be further improved to $O(n\log n)$. Using the same arguments as before, this result then also applies to \textsc{A2ST}.

Let a solution $\pmb{x}\in\Phi$ and two numbers $L_{\pmb{x}}$,$L_{\overline{\pmb{x}}}$ be given. We call a solution $\pmb{c}$ of problem  \textsc{AREC}  $(L_{\pmb{x}},L_{\overline{\pmb{x}}})-$\emph{compatible}, if 
\[c_i = \max\{\cl_i,\min\{\cu_i,L_{\pmb{x}}\}\} \quad \forall i\in X_{\pmb{x}} \]
\[c_i = \max\{\cl_i,\min\{\cu_i,L_{\overline{\pmb{x}}}\}\} \quad \forall i\in \overline{X}_{\pmb{x}} \]
Note that for each tuple $(L_{\pmb{x}},L_{\overline{\pmb{x}}})$ there is a unique solution (scenario in $\cU^c$) which is $(L_{\pmb{x}},L_{\overline{\pmb{x}}})-$\emph{compatible}. We denote this solution by $\pmb{c}(L_{\pmb{x}},L_{\overline{\pmb{x}}})$.
\begin{lemma}
Each solution $\pmb{c}$ of  \textsc{AREC}  can be transformed to a solution $\pmb{c}'$ which is $(L_{\pmb{x}},L_{\overline{\pmb{x}}})-$\emph{compatible} for some $L_{\pmb{x}},L_{\overline{\pmb{x}}}$ and has no lower objective value.
\label{lem_transform_to_compatible}
\end{lemma}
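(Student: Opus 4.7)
The plan is an exchange argument: starting from $\pmb{c}$, I iteratively modify it to reach $\pmb{c}'$ while never decreasing the objective value. Given $\pmb{c}$, let $\alpha,\beta$ be the optimal dual multipliers from Proposition~\ref{propkk}, and set $L_{\pmb{x}}:=\alpha+\beta$ and $L_{\overline{\pmb{x}}}:=\alpha$. In case~1 of Proposition~\ref{propkk} both thresholds coincide with $b(\pmb{c})$, while in case~2 they equal $b_1(\pmb{c})$ and $b_2(\pmb{c})$, respectively. Under either case, every item $i\in X_{\pmb{x}}$ with $c_i<L_{\pmb{x}}$ is selected by the incremental problem and every item with $c_i>L_{\pmb{x}}$ is not, with ties at $L_{\pmb{x}}$ resolved arbitrarily; the analogous statement holds for $\overline{X}_{\pmb{x}}$ and $L_{\overline{\pmb{x}}}$. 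The target scenario $\pmb{c}(L_{\pmb{x}},L_{\overline{\pmb{x}}})$ clips every item in $X_{\pmb{x}}$ to $L_{\pmb{x}}$ and every item in $\overline{X}_{\pmb{x}}$ to $L_{\overline{\pmb{x}}}$.

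In the first phase I perform budget-neutral swaps within $X_{\pmb{x}}$: whenever there exist items $i,j \in X_{\pmb{x}}$ with $c_i<L_{\pmb{x}}$, $c_i<\cu_i$ and $c_j>L_{\pmb{x}}$, $c_j>\cl_j$, I transfer $\epsilon:=\min\{\cu_i-c_i,\,c_j-\cl_j,\,L_{\pmb{x}}-c_i,\,c_j-L_{\pmb{x}}\}$ units of cost from $j$ to $i$. The last two arguments of the minimum guarantee that after the swap the partition of $X_{\pmb{x}}$ around $L_{\pmb{x}}$ is preserved, so $L_{\pmb{x}}$ itself does not change and the combinatorial structure of the incremental solution is preserved up to ties; a short direct calculation then shows that the inner minimum strictly increases by~$\epsilon$. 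Since each swap saturates at least one of the four bounds, after at most $O(n)$ iterations we reach a state in which either every $i\in X_{\pmb{x}}$ with $c_i<L_{\pmb{x}}$ satisfies $c_i=\cu_i$, or every $j\in X_{\pmb{x}}$ with $c_j>L_{\pmb{x}}$ satisfies $c_j=\cl_j$. The same procedure is then carried out on $\overline{X}_{\pmb{x}}$ with threshold $L_{\overline{\pmb{x}}}$; as the two sets are disjoint, the phases do not interfere.

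Once no swaps remain, a final phase of one-sided lowerings completes the construction: for each item $m$ whose cost is still strictly above its target level $L$ but whose lower bound satisfies $\cl_m\le L$, I lower $c_m$ down to $L$; these operations strictly release budget (hence preserve feasibility in $\cU^c$) and do not affect the objective, because such items are not selected and remain with cost at least $L$. An item-by-item check of the subcases $\cl_m\le L\le \cu_m$, $L<\cl_m$, and $\cu_m<L$ verifies that the resulting cost equals exactly $\max\{\cl_m,\min\{\cu_m,L\}\}$, so the scenario is $(L_{\pmb{x}},L_{\overline{\pmb{x}}})$-compatible.

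The main obstacle I expect is the tiebreaking when several items sit at the cost $L_{\pmb{x}}$ or $L_{\overline{\pmb{x}}}$, because a perturbation of a tied item could in principle allow the incremental to exchange it for another tied item. The crucial invariant---that the threshold levels $L_{\pmb{x}}$ and $L_{\overline{\pmb{x}}}$ stay fixed throughout the argument, thanks to the caps $L_{\pmb{x}}-c_i$ and $c_j-L_{\pmb{x}}$ (and their $\overline{X}_{\pmb{x}}$ counterparts)---makes such reshuffling benign and lets the compatibility test at the end use the same $L_{\pmb{x}},L_{\overline{\pmb{x}}}$ as at the start. The degenerate cases $k=0$ and $k=p$ reduce to single-set arguments in which one of the two thresholds is not needed and can be set freely without altering the clipping.
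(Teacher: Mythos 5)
There is a genuine gap: your procedure can terminate in a state that is not $(L_{\pmb{x}},L_{\overline{\pmb{x}}})$-compatible for \emph{any} choice of levels, because you never redistribute budget among the items lying \emph{below} the level. Observe that compatibility forces every item with $\cl_i<c_i<\cu_i$ to sit exactly at its set's level, so all partially budgeted items of $X_{\pmb{x}}$ must end up at one common cost. Your swaps only move budget from an above-level item with slack to a below-level item, and your final phase only \emph{lowers} items; hence, if the swaps stop because every above-level item of $X_{\pmb{x}}$ is already at its lower bound (your second termination alternative), the below-level items can be left at assorted costs strictly between their bounds, and nothing in your argument repairs this. Concretely, take $X_{\pmb{x}}=\{1,2,3,4\}$ with $p-k=3$, costs $(1,3,4,5)$, $\cl=(1,2,2,5)$, $\cu=(1,10,10,5)$, so that $L_{\pmb{x}}=b_1(\pmb{c})=4$ (the conclusion is the same in case~1 of Proposition~\ref{propkk}). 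Item $4$ has no removable budget, so no swap is available and your lowering phase changes nothing; yet item $2$ (cost $3$, strictly between $2$ and $10$) forces the level to be $3$ while item $3$ (cost $4$, strictly between $2$ and $10$) forces it to be $4$, so the output is compatible with no level whatsoever.

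The missing ingredient is an objective-preserving \emph{equalization} (water-filling) step among the selected items of each set: a budget-neutral transfer between two selected below-level items, capped so that both remain selected, leaves the sum of the $p-k$ (resp.\ $k$, resp.\ $p$) cheapest items unchanged and hence preserves the objective; iterating it brings all partially budgeted selected items to a common water level (or to their upper bounds). In the example this transfers $0.5$ from item $3$ to item $2$, giving the compatible level $3.5$. This also shows that your central invariant --- keeping $L_{\pmb{x}}$ equal to $b_1(\pmb{c})$ of the \emph{original} scenario throughout --- cannot be maintained: the level of the final compatible scenario is the water level after redistribution and is in general strictly smaller than $b_1(\pmb{c})$. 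Your above-to-below shifting step is correct and is indeed half of the intended argument (the paper's own proof is a two-line sketch deferring to Proposition~\ref{prop2}, whose device of spreading a cost change uniformly over the whole class of items tied at the critical value is exactly the equalization you omit), but without the within-level redistribution the transformation does not reach a compatible scenario.
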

\begin{proof}
The proof contains similar arguments as the proof of Proposition~\ref{prop2}. Shifting the budget from items which are above or at the corresponding levels to items which are not yet at its cost upper bound but below the corresponding levels does not decrease the objective function.
\end{proof}

%
%

\noindent
Denote by $\pmb{c}_{\pmb{x}}$ ($\pmb{c}_{\overline{\pmb{x}}}$) the projection of $\pmb{c}$ onto $X_{\pmb{x}}$ ($\overline{X}_{\pmb{x}}$).
The objective value, $F(\pmb{c})$, of solution $\pmb{c}$ for problem  \textsc{AREC}  can be expressed as follows:

\[F(\pmb{c})= \min_{j\in \{p-k,p-k+1,\dots,p\}}  \langle\pmb{c}_{\pmb{x}}\rangle_{(j)} + \langle\pmb{c}_{\overline{\pmb{x}}}\rangle_{(p-j)}\]

\noindent
where $\langle\pmb{v}\rangle_{(l)}$ denotes the sum of the $l$ smallest entries of vector $\pmb{v}$ ($\langle\pmb{v}\rangle_{0}:=0$).

\begin{lemma}
$F$ is a concave function on $\mathbb{R}^n_+$.
\label{lem_f_concave}
\end{lemma}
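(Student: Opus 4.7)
The plan is to prove concavity of $F$ by exhibiting it as the pointwise minimum of a finite family of linear functions of $\pmb{c}$. I would proceed in three short steps, each using the basic fact that the pointwise minimum of an arbitrary family of concave (in particular, linear) functions is concave, and that concavity is preserved under nonnegative sums.

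First I would show that, for any vector $\pmb{v}\in\mathbb{R}^m$ and any $l\in\{0,1,\dots,m\}$, the ``sum of the $l$ smallest entries'' map
\[
\pmb{v}\mapsto \langle\pmb{v}\rangle_{(l)}=\min_{S\subseteq[m],\,|S|=l}\sum_{i\in S}v_i
\]
is concave in $\pmb{v}$, since it is the minimum over a finite collection of linear functions (one per $l$-subset $S$). The case $l=0$ is trivial. This step is the crucial observation; everything else is formal.

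Next I would apply this to the two projections $\pmb{c}_{\pmb{x}}$ and $\pmb{c}_{\overline{\pmb{x}}}$: for each fixed $j\in\{p-k,\dots,p\}$, both $\pmb{c}\mapsto\langle\pmb{c}_{\pmb{x}}\rangle_{(j)}$ and $\pmb{c}\mapsto\langle\pmb{c}_{\overline{\pmb{x}}}\rangle_{(p-j)}$ are concave in $\pmb{c}$, because a projection is linear and composing a concave function with a linear map yields a concave function. Their sum is then concave.

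Finally, $F$ is the pointwise minimum of the $k+1$ concave functions indexed by $j$, hence concave on $\mathbb{R}^n_+$ (in fact on all of $\mathbb{R}^n$). I do not foresee any real obstacle here: the representation of the ``$l$ smallest'' operator as a minimum over $\binom{m}{l}$ linear functionals does all the work, and every subsequent operation preserves concavity.
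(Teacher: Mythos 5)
Your proof is correct and follows essentially the same route as the paper: write each $f_j(\pmb{c})=\langle\pmb{c}_{\pmb{x}}\rangle_{(j)}+\langle\pmb{c}_{\overline{\pmb{x}}}\rangle_{(p-j)}$ as a sum of concave functions and take the pointwise minimum over $j$. The only difference is that you explicitly justify concavity of $\langle\cdot\rangle_{(l)}$ via its representation as a minimum of $\binom{m}{l}$ linear functionals, a step the paper simply asserts.
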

\begin{proof}
Note that $h(\pmb{v})=\langle\pmb{v}\rangle_{(l)}$ is a concave function on $\mathbb{R}^n_+$. Hence, since the sum of concave functions is concave, $f_j(\pmb{c}) := \langle\pmb{c}_{\pmb{x}}\rangle_{(j)} + \langle\pmb{c}_{\overline{\pmb{x}}}\rangle_{(p-j)}$ is a concave function. Since the minimum of concave functions is again a concave function, it follows that $F:= \min_{j\in \{p-k,p-k+1,\dots,p\}} f_j$ is a concave function.
\end{proof}
\noindent
For two levels ($L_{\pmb{x}},L_{\overline{\pmb{x}}}$) we define the set of \emph{active items}:
\[A_{\pmb{x}}(L_{\pmb{x}},L_{\overline{\pmb{x}}}) := \{i \in X_{\pmb{x}}: c(L_{\pmb{x}},L_{\overline{\pmb{x}}})_i  = L_{\pmb{x}}, c(L_{\pmb{x}},L_{\overline{\pmb{x}}})_i < \cu_i\},\]
\[A_{\overline{\pmb{x}}}(L_{\pmb{x}},L_{\overline{\pmb{x}}}) := \{i \in \overline{X}_{\pmb{x}}: c(L_{\pmb{x}},L_{\overline{\pmb{x}}})_i = L_{\overline{\pmb{x}}}, c(L_{\pmb{x}},L_{\overline{\pmb{x}}})_i < \cu_i\}.\]

Recall that in the incremental problem, exactly $p$ items need to be selected. After the $p-k$ cheapest items are chosen from $X_{\pmb{x}}$, the $k$ cheapest items are chosen from the remaining items. If two items have the same cost, we differentiate between two tie-breaking rules: Prefer items from $X_{\pmb{x}}$ (rule~1) and prefer items from $\overline{X}_{\pmb{x}}$ (rule~2). We solve the incremental problem with cost $\pmb{c}(L_{\pmb{x}},L_{\overline{\pmb{x}}})$ with tie-breaking rule~2. The set of \emph{selected items} $I_{\pmb{x}}(L_{\pmb{x}},L_{\overline{\pmb{x}}})$ is defined as the set of all items from $X_{\pmb{x}}$ which are part of the incremental solution. Analogously, we define the set $I_{\overline{\pmb{x}}}(L_{\pmb{x}},L_{\overline{\pmb{x}}})$ as the set of all items from $\overline{X}_{\pmb{x}}$ which are part of the incremental solution if tie-breaking rule~1 is used. 
Further, we define the set $I(L_{\pmb{x}},L_{\overline{\pmb{x}}}) \subseteq [n]$ as the complete solution of the incremental problem. For the definition of this set, the choice of the tie-breaking rule is not important (without loss of generality we may assume that rule~1 is used). 

Assume we are given a small budget $\epsilon$ which we can invest to transform $\pmb{c}(L_{\pmb{x}},L_{\overline{\pmb{x}}})$ to a new solution $\pmb{c}(L_{\pmb{x}}',L_{\overline{\pmb{x}}}')$ with $L_{\pmb{x}} \leq L_{\pmb{x}}'$ and $L_{\overline{\pmb{x}}} \leq L_{\overline{\pmb{x}}}'$. Further assume that the sets of active items and the sets of selected items are the same for $(L_{\pmb{x}},L_{\overline{\pmb{x}}})$ and $(L_{\pmb{x}}',L_{\overline{\pmb{x}}}')$, and that we decide to keep $L_{\overline{\pmb{x}}}'=L_{\overline{\pmb{x}}}$ (the case that $L_{\pmb{x}} = L_{\pmb{x}}'$ is analogous). To increase $L_{\pmb{x}}$ we have to equally distribute the budget on all items from $A_{\pmb{x}}(L_{\pmb{x}},L_{\overline{\pmb{x}}})$. Hence, we get that
\[L_{\pmb{x}}' = L_{\pmb{x}} + \frac{\epsilon}{|A_{\pmb{x}}(L_{\pmb{x}},L_{\overline{\pmb{x}}})|}.\]
The objective value of the solution increases only for the selected items which were increased. Hence, we have that 
\[F(\pmb{c}(L_{\pmb{x}}',L_{\overline{\pmb{x}}}'))= F(\pmb{c}(L_{\pmb{x}},L_{\overline{\pmb{x}}})) + \epsilon\frac{|I_{\pmb{x}}(L_{\pmb{x}},L_{\overline{\pmb{x}}})|}{|A_{\pmb{x}}(L_{\pmb{x}},L_{\overline{\pmb{x}}})|}.\]
We see that the gain in the objective function is determined by the ratio of selected and active items. We denote these ratios in the following as
\begin{align*}
R_{\pmb{x}}(L_{\pmb{x}},L_{\overline{\pmb{x}}}) &:= \frac{|I_{\pmb{x}}(L_{\pmb{x}},L_{\overline{\pmb{x}}}) \cap A_{\pmb{x}}(L_{\pmb{x}},L_{\overline{\pmb{x}}})|}{|A_{\pmb{x}}(L_{\pmb{x}},L_{\overline{\pmb{x}}})|} \\
R_{\overline{\pmb{x}}}(L_{\pmb{x}},L_{\overline{\pmb{x}}}) &:= \frac{|I_{\overline{\pmb{x}}}(L_{\pmb{x}},L_{\overline{\pmb{x}}}) \cap A_{\overline{\pmb{x}}}(L_{\pmb{x}},L_{\overline{\pmb{x}}})|}{|A_{\overline{\pmb{x}}}(L_{\pmb{x}},L_{\overline{\pmb{x}}})|} \\
R(L_{\pmb{x}},L_{\overline{\pmb{x}}}) &:= \frac{| I(L_{\pmb{x}},L_{\overline{\pmb{x}}}) \cap (A_{\pmb{x}}(L_{\pmb{x}},L_{\overline{\pmb{x}}}) \cup A_{\overline{\pmb{x}}}(L_{\pmb{x}},L_{\overline{\pmb{x}}}))|}{|A_{\pmb{x}}(L_{\pmb{x}},L_{\overline{\pmb{x}}}) \cup A_{\overline{\pmb{x}}}(L_{\pmb{x}},L_{\overline{\pmb{x}}})|}. 
\end{align*}
Ratio $R_{\pmb{x}}(L_{\pmb{x}},L_{\overline{\pmb{x}}})$ ($R_{\overline{\pmb{x}}}(L_{\pmb{x}},L_{\overline{\pmb{x}}})$) defines how efficient it is to increase $L_{\pmb{x}}$ ($L_{\overline{\pmb{x}}}$) and the ratio $R(L_{\pmb{x}},L_{\overline{\pmb{x}}})$ computes the efficiency of increasing $L_{\pmb{x}}$ and $L_{\overline{\pmb{x}}}$ simultaneously.  

The algorithm performs a sequence of greedy update steps which are locally optimal. We prove later that the so found solution is indeed optimal. The algorithm starts with solution $\pmb{c}^0 := \underline{\pmb{c}}$. The initial levels $L_{\pmb{x}}^0$ and $L_{\overline{\pmb{x}}}^0$ are equal to the smallest value of $\pmb{c}_{\pmb{x}}^0$ and $\pmb{c}_{\overline{\pmb{x}}}^0$. The algorithm computes $R_{\pmb{x}}:=R_{\pmb{x}}(L_{\pmb{x}}^0,L_{\overline{\pmb{x}}}^0)$, $R_{\overline{\pmb{x}}}:=R_{\overline{\pmb{x}}}(L_{\pmb{x}}^0,L_{\overline{\pmb{x}}}^0)$, and $R:=R(L_{\pmb{x}}^0,L_{\overline{\pmb{x}}}^0)$.

The idea is to change the values of $\pmb{c}^0$ to $\pmb{c}^1$ such that either $L_{\pmb{x}}^0$ or $L_{\overline{\pmb{x}}}^0$ (or both) are increased to $L_{\pmb{x}}^1$ or $L_{\overline{\pmb{x}}}^1$, depending on which update has the highest efficiency. 

First, assume that $L_{\pmb{x}}^0 \neq L_{\overline{\pmb{x}}}^0$. Depending on whether $R_{\pmb{x}}$ or $R_{\overline{\pmb{x}}}$ is higher, we increase either $L_{\pmb{x}}^0$ or $L_{\overline{\pmb{x}}}^0$, until one of the efficiency ratio changes or if the complete budget $\Gamma$ is spent. A change of the efficiency ratio means that either the set of active or the set of selected items changes. We define the set of \emph{interesting levels} as $\mathcal{L}:=\{\cl_1,\dots,\cl_n,\} \cup \{\cu_1,\dots,\cu_n,\}$. Note that the set of active items can only change at levels in $\mathcal{L}$. The set of selected items can only change at interesting levels and if both levels $L_{\pmb{x}}$ and $L_{\overline{\pmb{x}}}$ become equal. Therefore, the amount for which we can increase either $L_{\pmb{x}}^0$ or $L_{\overline{\pmb{x}}}^0$ before one of the efficiency ratio changes is strictly greater zero. 


Next, consider the case that $L_{\pmb{x}}^0 = L_{\overline{\pmb{x}}}^0$. In this case, it can happen that increasing $L_{\pmb{x}}^0$ and $L_{\overline{\pmb{x}}}^0$ simultaneously is more efficient than increasing only one of these two levels. Hence, if $R \geq \max\{R_{\pmb{x}},R_{\overline{\pmb{x}}}\}$, we change $\pmb{c}^0$ to $\pmb{c}^1$ such that both $L_{\pmb{x}}^0$ and $L_{\overline{\pmb{x}}}^0$ are increased. Otherwise we increase only $L_{\pmb{x}}^0$ or $L_{\overline{\pmb{x}}}$, as described before. Investing a budget of $\epsilon$ in a simultaneous update of $L_{\pmb{x}}^0$ and $L_{\overline{\pmb{x}}}$ increases the objective function by $\epsilon R$.

We present in Figures~\ref{fig_r_x}, \ref{fig_r_y}, and \ref{fig_r} all three possible situations. The sample instance consists of $10$ items, the left items are $X_{\pmb{x}}$ and the right items are $\overline{X}_{\pmb{x}}$. Further, $p=5$ and $k=2$. The bars represent the actual costs of each item and the boxes visualize the upper bound $\cu_i$ for the cost of each item. The items which are active have a grid pattern, the other items diagonal lines. In Figure~\ref{fig_r_x}, the best improvement is done by increasing $L_{\pmb{x}}$. In Figure~\ref{fig_r_y}, the best improvement is done by increasing $L_{\overline{\pmb{x}}}$. In Figure~\ref{fig_r}, the best improvement is done by increasing both $L_{\pmb{x}}$ and $L_{\overline{\pmb{x}}}$.


\begin{figure}[htb]
\centering
\begin{tikzpicture}
\draw [pattern=north west lines, pattern color=black] (0,0) rectangle (0.5,2);
\draw [pattern= grid, pattern color=black] (0.75,0) rectangle (1.25,2.5);
\draw [pattern= grid, pattern color=black] (1.5,0) rectangle (2,2.5);
\draw [pattern= grid, pattern color=black] (2.25,0) rectangle (2.75,2.5);
\draw [pattern= north west lines, pattern color=black] (3,0) rectangle (3.5,4);

\draw [color=black] (0,0) rectangle (0.5,2);
\draw [color=black] (0.75,0) rectangle (1.25,2.8);
\draw [color=black] (1.5,0) rectangle (2,2.7);
\draw [color=black] (2.25,0) rectangle (2.75,3);
\draw [color=black] (3,0) rectangle (3.5,4.2);

\draw [pattern= grid, pattern color=black] (4,0) rectangle (4.5,1);
\draw [pattern= grid, pattern color=black] (4.75,0) rectangle (5.25,1);
\draw [pattern= grid, pattern color=black] (5.5,0) rectangle (6,1);
\draw [pattern= grid, pattern color=black] (6.25,0) rectangle (6.75,1);
\draw [pattern= north west lines, pattern color=black] (7,0) rectangle (7.5,4);

\draw [color=black] (4,0) rectangle (4.5,2.3);
\draw [color=black] (4.75,0) rectangle (5.25,2.5);
\draw [color=black] (5.5,0) rectangle (6,2.6);
\draw [color=black] (6.25,0) rectangle (6.75,3);
\draw [color=black] (7,0) rectangle (7.5,4.3);

\draw [thick] (-0.5,2.5) -- (3.5,2.5);
\draw [thick] (4,1) -- (8,1);

\node at (-1,2.5) {$L_{\pmb{x}}$} ;
\node at (8.5,1) {$L_{\overline{\pmb{x}}}$};

\node at (2,-0.5) {$R_{\pmb{x}} =\frac{2}{3}$} ;
\node at (6,-0.5) {$R_{\overline{\pmb{x}}} =\frac{2}{4}$} ;

\end{tikzpicture}
\caption{Since $p=5$ and $k=2$, at least $3$ items must be chosen from $X_{\pmb{x}}$. Hence, the ratio $R_{\pmb{x}}$ is equal to $\frac{2}{3}$ and the algorithm increases $L_{\pmb{x}}$.}
\label{fig_r_x}
\end{figure}
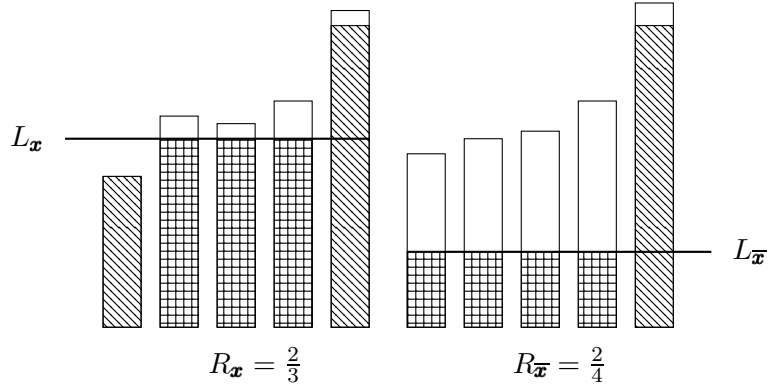


\begin{figure}[htb]
\centering
\begin{tikzpicture}
\draw [pattern=north west lines, pattern color=black] (0,0) rectangle (0.5,2);
\draw [pattern= grid, pattern color=black] (0.75,0) rectangle (1.25,2.5);
\draw [pattern= grid, pattern color=black] (1.5,0) rectangle (2,2.5);
\draw [pattern= grid, pattern color=black] (2.25,0) rectangle (2.75,2.5);
\draw [pattern= north west lines, pattern color=black] (3,0) rectangle (3.5,4);

\draw [color=black] (0,0) rectangle (0.5,2);
\draw [color=black] (0.75,0) rectangle (1.25,2.8);
\draw [color=black] (1.5,0) rectangle (2,2.7);
\draw [color=black] (2.25,0) rectangle (2.75,3);
\draw [color=black] (3,0) rectangle (3.5,4.2);

\draw [pattern= grid, pattern color=black] (4,0) rectangle (4.5,1);
\draw [pattern= north west lines, pattern color=black] (4.75,0) rectangle (5.25,1.5);
\draw [pattern= north west lines, pattern color=black] (5.5,0) rectangle (6,2);
\draw [pattern= north west lines, pattern color=black] (6.25,0) rectangle (6.75,2.5);
\draw [pattern= north west lines, pattern color=black] (7,0) rectangle (7.5,4);

\draw [color=black] (4,0) rectangle (4.5,2.3);
\draw [color=black] (4.75,0) rectangle (5.25,2.5);
\draw [color=black] (5.5,0) rectangle (6,2.6);
\draw [color=black] (6.25,0) rectangle (6.75,3);
\draw [color=black] (7,0) rectangle (7.5,4.3);

\draw [thick] (-0.5,2.5) -- (3.5,2.5);
\draw [thick] (4,1) -- (8,1);

\node at (-1,2.5) {$L_{\pmb{x}}$} ;
\node at (8.5,1) {$L_{\overline{\pmb{x}}}$};

\node at (2,-0.5) {$R_{\pmb{x}} =\frac{2}{3}$} ;
\node at (6,-0.5) {$R_{\overline{\pmb{x}}} =1$} ;

\end{tikzpicture}
\caption{The ratio $R_{\overline{\pmb{x}}}$ is equal to $1$, hence it is optimal to increase $L_{\overline{\pmb{x}}}$.}
\label{fig_r_y}
\end{figure}


\begin{figure}[htb]
\centering
\begin{tikzpicture}
\draw [pattern=north west lines, pattern color=black] (0,0) rectangle (0.5,2);
\draw [pattern= grid, pattern color=black] (0.75,0) rectangle (1.25,2.5);
\draw [pattern= grid, pattern color=black] (1.5,0) rectangle (2,2.5);
\draw [pattern= grid, pattern color=black] (2.25,0) rectangle (2.75,2.5);
\draw [pattern= north west lines, pattern color=black] (3,0) rectangle (3.5,4);

\draw [color=black] (0,0) rectangle (0.5,2);
\draw [color=black] (0.75,0) rectangle (1.25,2.8);
\draw [color=black] (1.5,0) rectangle (2,2.7);
\draw [color=black] (2.25,0) rectangle (2.75,3);
\draw [color=black] (3,0) rectangle (3.5,4.2);

\draw [pattern= grid, pattern color=black] (4,0) rectangle (4.5,2.5);
\draw [pattern= grid, pattern color=black] (4.75,0) rectangle (5.25,2.5);
\draw [pattern= grid, pattern color=black] (5.5,0) rectangle (6,2.5);
\draw [pattern= grid, pattern color=black] (6.25,0) rectangle (6.75,2.5);
\draw [pattern= north west lines, pattern color=black] (7,0) rectangle (7.5,4);

\draw [color=black] (4,0) rectangle (4.5,3.5);
\draw [color=black] (4.75,0) rectangle (5.25,2.7);
\draw [color=black] (5.5,0) rectangle (6,3.8);
\draw [color=black] (6.25,0) rectangle (6.75,3);
\draw [color=black] (7,0) rectangle (7.5,4.3);

\draw [thick] (-0.5,2.5) -- (3.5,2.5);
\draw [thick] (4,2.5) -- (8,2.5);

\node at (-1,2.5) {$L_{\pmb{x}}$} ;
\node at (8.5,2.5) {$L_{\overline{\pmb{x}}}$};

\node at (2,-0.5) {$R_{\pmb{x}} =\frac{2}{4}$} ;
\node at (4,-0.5) {$R =\frac{4}{7}$} ;
\node at (6,-0.5) {$R_{\overline{\pmb{x}}} = \frac{0}{4}$} ;

\end{tikzpicture}
\caption{Recall that no items need to be chosen from $\overline{X}_{\pmb{x}}$. Hence, the ratio $R_{\overline{\pmb{x}}}=0$. Note, the importance of tie-breaking rule~1 (which is used in the definition of the set of selected items $I_{\overline{\pmb{x}}}(\pmb{c})$). In fact, the global ratio $R> R_{\pmb{x}}$, hence, it is optimal to increase $L_{\pmb{x}}$ and $L_{\overline{\pmb{x}}}$ in parallel.}
\label{fig_r}
\end{figure}
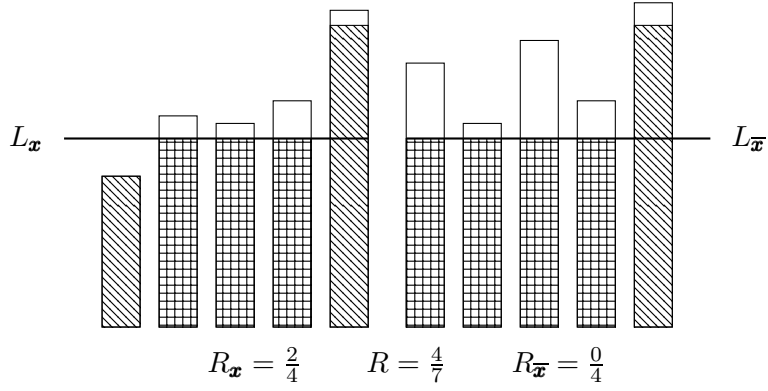



After the costs and levels are updated to $\pmb{c}^1$ and $L_{\pmb{x}}^1,L_{\overline{\pmb{x}}}^1$, the algorithm updates $R_{\pmb{x}}:=R_{\pmb{x}}(L_{\pmb{x}}^1,L_{\overline{\pmb{x}}}^1)$, $R_{\overline{\pmb{x}}}:=R_{\overline{\pmb{x}}}(L_{\pmb{x}}^1,L_{\overline{\pmb{x}}}^1)$, and $R:=R(L_{\pmb{x}}^1,L_{\overline{\pmb{x}}}^1)$ an repeats the process until the complete budget $\Gamma$ is spent or each item is at its maximum cost level.

Since the efficiency ratios can only change at interesting levels or if $L_{\pmb{x}}$ and $L_{\overline{\pmb{x}}}$ become equal, we conclude that the algorithm can perform at most $O(n)$ many steps before either the complete budget $\Gamma$ is spent or the cost of each item $c_i$ is raised to its maximum cost~$\cu_i$. We discuss at the end of this section how to efficiently implement the algorithm to have an amortized complexity of $O(\log(n))$ in each step. This gives the claimed $O(n\log(n))$ time complexity for the complete algorithm.


\noindent
The following lemma is important for the analysis of the algorithm.

%
%

\begin{lemma}
For $L_{\pmb{x}} \leq L_{\pmb{x}}'$ and $L_{\overline{\pmb{x}}} \leq L_{\overline{\pmb{x}}}'$, we have that 
\begin{itemize}
\item[(i)] $R_{\pmb{x}}(L_{\pmb{x}},L_{\overline{\pmb{x}}}) \geq R_{\pmb{x}}(L_{\pmb{x}}',L_{\overline{\pmb{x}}})$
\item[(ii)] $R_{\overline{\pmb{x}}}(L_{\pmb{x}},L_{\overline{\pmb{x}}}) \leq R_{\overline{\pmb{x}}}(L_{\pmb{x}}',L_{\overline{\pmb{x}}})$
\item[(iii)] $R_{\overline{\pmb{x}}}(L_{\pmb{x}},L_{\overline{\pmb{x}}}) \geq R_{\overline{\pmb{x}}}(L_{\pmb{x}},L_{\overline{\pmb{x}}}')$
\item[(iv)] $R_{\pmb{x}}(L_{\pmb{x}},L_{\overline{\pmb{x}}}) \leq R_{\pmb{x}}(L_{\pmb{x}},L_{\overline{\pmb{x}}}')$
\end{itemize}
\label{lem_inc_dec_ratio}
\end{lemma}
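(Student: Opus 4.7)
The plan is to reduce to two cases via symmetry and then carry out a transition-by-transition analysis. Statements (iii) and (ii) are obtained from (i) and (iv) respectively by the symmetric swap $\pmb{x}\leftrightarrow\overline{\pmb{x}}$, so it suffices to handle (i) and (iv). My starting point is a closed-form rewriting of the ratio. Partition $X_{\pmb{x}}$ into \emph{sunk} items (with $\cu_i \leq L_{\pmb{x}}$, so $c_i=\cu_i$), \emph{active} items (the set $A_{\pmb{x}}$, with $\cl_i\leq L_{\pmb{x}}<\cu_i$), and \emph{floaters} ($\cl_i>L_{\pmb{x}}$, so $c_i=\cl_i$); let $s_{\pmb{x}}$ denote the number of sunk items, and write $|I_{\pmb{x}}|=p-k+j$, where $j\in\{0,\ldots,k\}$ counts the $X_{\pmb{x}}$-items picked in the ``$k$ cheapest from the rest'' stage. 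Under tie-breaking rule~2 the set $I_{\pmb{x}}$ is exactly the prefix of the $p-k+j$ cheapest items of $X_{\pmb{x}}$ listed in the order sunk $\prec$ active $\prec$ floater, which yields $|I_{\pmb{x}}\cap A_{\pmb{x}}|=\max\{0,\min\{|A_{\pmb{x}}|,\,p-k+j-s_{\pmb{x}}\}\}$. Setting $n=p-k+j-s_{\pmb{x}}$ and $d=|A_{\pmb{x}}|$, the ratio becomes $R_{\pmb{x}}=\max\{0,\min\{1,n/d\}\}$, the object of the analysis.

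For (iv) I would fix $L_{\pmb{x}}$; then $d$ and $s_{\pmb{x}}$ depend only on $L_{\pmb{x}}$, so only $j$ varies with $L_{\overline{\pmb{x}}}$. Raising $L_{\overline{\pmb{x}}}$ weakly increases the costs in $\overline{X}_{\pmb{x}}$ while leaving those in $X_{\pmb{x}}$ unchanged, and since rule~2 prefers $\overline{X}_{\pmb{x}}$ in ties, the number $j$ of $X_{\pmb{x}}$-items in the second stage is weakly non-decreasing. Hence $n$ grows with $d$ fixed, so $R_{\pmb{x}}$ is weakly non-decreasing, settling (iv).

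For (i) I would fix $L_{\overline{\pmb{x}}}$; then $R_{\pmb{x}}$ is piecewise constant in $L_{\pmb{x}}$ and jumps only at finitely many transitions of three kinds: T1, where $L_{\pmb{x}}=\cl_i$ for some $i\in X_{\pmb{x}}$ (a floater joins $A_{\pmb{x}}$, so $d$ grows by one while $s_{\pmb{x}}$ is unchanged); T2, where $L_{\pmb{x}}=\cu_i$ for some $i\in X_{\pmb{x}}$ (an active item becomes sunk, so $d$ drops by one and $s_{\pmb{x}}$ rises by one); and T3, where $L_{\pmb{x}}$ crosses a fixed $\overline{X}_{\pmb{x}}$-cost (so an $X_{\pmb{x}}$-item loses a tie-break to an $\overline{X}_{\pmb{x}}$-item and $j$ decreases). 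In every case the plan is to verify that $R_{\pmb{x}}$ does not increase: the elementary inequalities $n/(d+1)\leq n/d$ (for T1, using $n\geq 0$ and that $j$ is weakly non-increasing in $L_{\pmb{x}}$ by a dual of the argument for (iv)) and $(n-1)/(d-1)\leq n/d$ (valid for T2 whenever $0\leq n\leq d$) yield the required monotonicity, while T3 is immediate since $n$ decreases with $d$ fixed.

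The main obstacle I anticipate is not the inequalities themselves but the combinatorial bookkeeping at transitions: arguing rigorously that $j$ is indeed weakly non-increasing in $L_{\pmb{x}}$ despite the discontinuous reshuffling of membership in $A_{\pmb{x}}$, and ensuring the boundary regimes $R_{\pmb{x}}\in\{0,1\}$ (where the $\min$/$\max$ clips in the formula for $|I_{\pmb{x}}\cap A_{\pmb{x}}|$ activate) are handled consistently when several transitions coincide at the same level. Once these details are in place, iterating the per-transition monotonicity across all transitions in the interval $[L_{\pmb{x}},L_{\pmb{x}}']$ completes the proof of (i).
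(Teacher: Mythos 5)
Your proposal is correct and follows essentially the same route as the paper's proof: parts (ii)/(iv) via a fixed active set and a monotone selected set, and parts (i)/(iii) via a stepwise analysis of exactly the same three transition types (an item becoming active, an active item reaching its upper bound, an item leaving the selected set), each of which can only decrease the ratio. Your explicit closed form $R_{\pmb{x}}=\max\{0,\min\{1,n/d\}\}$ and the inequalities $n/(d+1)\leq n/d$ and $(n-1)/(d-1)\leq n/d$ just make the paper's case analysis slightly more formal.
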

\begin{proof}
Point $(ii)$ follows from the facts that $A_{\overline{\pmb{x}}}(L_{\pmb{x}},L_{\overline{\pmb{x}}})=A_{\overline{\pmb{x}}}(L_{\pmb{x}}',L_{\overline{\pmb{x}}})$ and $I_{\overline{\pmb{x}}}(L_{\pmb{x}},L_{\overline{\pmb{x}}}) \subseteq I_{\overline{\pmb{x}}}(L_{\pmb{x}}',L_{\overline{\pmb{x}}})$ (point~$(iv)$ is analogous). Next consider point $(i)$ (point $(iii)$ is analogous). We assume that $R_{\pmb{x}}(L_{\pmb{x}},L_{\overline{\pmb{x}}})<1$, since otherwise the inequality is trivial. First, observe that $I_{\pmb{x}}(L_{\pmb{x}}',L_{\overline{\pmb{x}}}) \subseteq I_{\pmb{x}}(L_{\pmb{x}},L_{\overline{\pmb{x}}})$. Assume we start at $L_{\pmb{x}}$ and increase it stepwise until it reaches $L_{\pmb{x}}'$. During each step $R_{\pmb{x}}$ should not change. Three different causes may lead to a change of $R_{\pmb{x}}$. First, one item from $X_{\pmb{x}}$ might become an active item, which was before not an active item. This increases $|A_{\pmb{x}}(L_{\pmb{x}},L_{\overline{\pmb{x}}})|$ by one, whereas $|I_{\pmb{x}}(L_{\pmb{x}},L_{\overline{\pmb{x}}}) \cap A_{\pmb{x}}(L_{\pmb{x}},L_{\overline{\pmb{x}}})|$ stays constant (since $R_{\pmb{x}}(L_{\pmb{x}},L_{\overline{\pmb{x}}})<1$). Second, an item $i$ which was active may reach its upper bound $\cu_i$. If item $i$ is one of the selected items, we get that both $|A_{\pmb{x}}(L_{\pmb{x}},L_{\overline{\pmb{x}}})|$ and $|I_{\pmb{x}}(L_{\pmb{x}},L_{\overline{\pmb{x}}}) \cap A_{\pmb{x}}(L_{\pmb{x}},L_{\overline{\pmb{x}}})|$ are reduced by one. Note that if $i$ is not among the selected items, we must have that $|I_{\pmb{x}}(L_{\pmb{x}},L_{\overline{\pmb{x}}}) \cap A_{\pmb{x}}(L_{\pmb{x}},L_{\overline{\pmb{x}}})|=0$, and hence $R_{\pmb{x}}=0$ and can not decrease any further. Third, an item $i$ leaves the set of selected items, in this case $|I_{\pmb{x}}(L_{\pmb{x}},L_{\overline{\pmb{x}}}) \cap A_{\pmb{x}}(L_{\pmb{x}},L_{\overline{\pmb{x}}})|$ is reduced by one. Note that all three cases, lead to a reduction of $R_{\pmb{x}}$ (except for the case where $R_{\pmb{x}}=0$).
\end{proof}

\begin{lemma}
Solution $\tilde{\pmb{c}}$ computed by the algorithm is optimal.
\label{lem_sol_is_opt}
\end{lemma}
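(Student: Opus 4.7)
The plan is to lift the local greedy optimality of the algorithm to global optimality by combining three facts already established: the reduction to compatible scenarios (Lemma~\ref{lem_transform_to_compatible}), the concavity of $F$ on $\cU^c$ (Lemma~\ref{lem_f_concave}), and the monotonicity of the efficiency ratios (Lemma~\ref{lem_inc_dec_ratio}).

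First, by Lemma~\ref{lem_transform_to_compatible} it suffices to compare $\tilde{\pmb{c}}$ against $(L_{\pmb{x}},L_{\overline{\pmb{x}}})$-compatible scenarios $\pmb{c}(L_{\pmb{x}},L_{\overline{\pmb{x}}})$. Since $F$ is concave on the convex polytope $\cU^c$ (Lemma~\ref{lem_f_concave}), a first-order condition is enough: it suffices to show that no feasible compatible direction from $\tilde{\pmb{c}}$ has strictly positive directional derivative of $F$.

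Second, I would cast the algorithm as a steepest-ascent scheme: at each step it spends the next increment of budget in the direction of largest efficiency among $R_{\pmb{x}}$, $R_{\overline{\pmb{x}}}$, and (when $L_{\pmb{x}}=L_{\overline{\pmb{x}}}$) also the joint ratio $R$. Parts (i),(iii) of Lemma~\ref{lem_inc_dec_ratio} imply that the efficiency of the direction just advanced is non-increasing, while (ii),(iv) imply that the efficiency of a rival single-coordinate direction can only grow as the other coordinate grows. These together yield that the sequence of efficiencies used by the algorithm is a non-increasing step function $m(t)$ of the cumulative budget $t \in [0,\Gamma]$, giving the identity $F(\tilde{\pmb{c}}) - F(\underline{\pmb{c}}) = \int_0^\Gamma m(t)\,dt$.

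Third, suppose toward contradiction that a compatible $\pmb{c}^*$ with budget at most $\Gamma$ satisfies $F(\pmb{c}^*) > F(\tilde{\pmb{c}})$. By concavity of $F$, the directional derivative at $\tilde{\pmb{c}}$ toward $\pmb{c}^*$ is strictly positive, so there is a feasible improving direction $(\Delta L_{\pmb{x}},\Delta L_{\overline{\pmb{x}}})$ at $\tilde{\pmb{c}}$. If it is monotone (both components nonnegative), then the algorithm's budget is tight (otherwise the greedy would have continued), so a strict monotone improvement within the budget is impossible. If it is non-monotone, say $\Delta L_{\pmb{x}}>0>\Delta L_{\overline{\pmb{x}}}$, then the improvement condition is $R_{\pmb{x}}(\tilde{\pmb{c}})>R_{\overline{\pmb{x}}}(\tilde{\pmb{c}})$; but Lemma~\ref{lem_inc_dec_ratio} chained from the last moment the algorithm spent budget on $L_{\overline{\pmb{x}}}$ (when, by greedy choice, $R_{\overline{\pmb{x}}}\geq R_{\pmb{x}}$ held) yields $R_{\overline{\pmb{x}}}(\tilde{\pmb{c}})\geq R_{\pmb{x}}(\tilde{\pmb{c}})$, a contradiction.

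The main obstacle will be carefully handling the non-monotone exchange across the piecewise-linear breakpoints of $F$, and the case in which $\tilde L_{\pmb{x}}=\tilde L_{\overline{\pmb{x}}}$ and the joint direction $R$ was used: here the comparison must be made against the joint ratio, and concavity of $F$ is needed to rule out that splitting the joint step into two separate single-coordinate steps could outperform the greedy.
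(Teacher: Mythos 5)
Your high-level ingredients (compatibility, concavity, ratio monotonicity, contradicting a past greedy choice) are the same ones the paper uses, but the steps that are supposed to carry the argument do not hold as stated. The reduction to a first-order condition in the two-dimensional level space is unjustified: Lemma~\ref{lem_f_concave} gives concavity of $F$ in scenario space $\mathbb{R}^n_+$, while the compatibility map $(L_{\pmb{x}},L_{\overline{\pmb{x}}})\mapsto \pmb{c}(L_{\pmb{x}},L_{\overline{\pmb{x}}})$ truncates componentwise via $\max\{\cl_i,\min\{\cu_i,L\}\}$, which has a convex kink at $L=\cl_i$; hence $F\circ\pmb{c}(\cdot,\cdot)$ need not be concave in the levels, and a positive directional derivative of $F$ toward $\pmb{c}^*$ does not produce an improving direction $(\Delta L_{\pmb{x}},\Delta L_{\overline{\pmb{x}}})$. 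The paper repairs exactly this point by forming $\hat{\pmb{c}}=(1-\epsilon)\tilde{\pmb{c}}+\epsilon\pmb{c}^*$ in scenario space and only then re-compatibilizing via Lemma~\ref{lem_transform_to_compatible}, choosing $\epsilon$ so small that the resulting levels lie strictly inside the algorithm's last update step. Your claim that the greedy efficiencies form a non-increasing function $m(t)$ is also false: parts (ii) and (iv) of Lemma~\ref{lem_inc_dec_ratio} say that raising one level \emph{increases} the rival ratio, so after a step spent on $L_{\pmb{x}}$ the ratio $R_{\overline{\pmb{x}}}$ can jump strictly above the efficiency just used; this greedy has no diminishing-returns structure to lean on.

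The decisive gap is in the non-monotone case, which is the substance of the proof. Your ``improvement condition'' $R_{\pmb{x}}(\tilde{\pmb{c}})>R_{\overline{\pmb{x}}}(\tilde{\pmb{c}})$ compares two right-ratios, whereas withdrawing budget from $L_{\overline{\pmb{x}}}$ is governed by the ratio just \emph{below} $\tilde{L}_{\overline{\pmb{x}}}$, and these differ exactly at the breakpoints where the algorithm ends its steps. Moreover the chain you propose from the last $\overline{\pmb{x}}$-step does not close: between that step and $\tilde{\pmb{c}}$ the level $L_{\pmb{x}}$ has increased, under which (i) pushes $R_{\pmb{x}}$ down but the greedy choice at the start of the intervening $\pmb{x}$-step gives $R_{\pmb{x}}\geq R_{\overline{\pmb{x}}}$ there, so the inequalities point the wrong way and $R_{\overline{\pmb{x}}}(\tilde{\pmb{c}})\geq R_{\pmb{x}}(\tilde{\pmb{c}})$ does not follow. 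The paper instead anchors the comparison at the divergence point of the two trajectories (the start of the last step that raised the ``wrong'' level, where both ratios are constant throughout the step, giving the chain in Case~2a), and handles the simultaneous-update case $L_{\pmb{x}}=L_{\overline{\pmb{x}}}$ with the joint ratio $R$ by an explicit decomposition of both trajectories from $\pmb{c}(L_{\pmb{x}}',L_{\pmb{x}}')$ (Case~2b). You flag both of these as ``obstacles,'' but without them the contradiction is not established.
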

\begin{proof}
Denote by $\tilde{L}_{\pmb{x}}$ and $\tilde{L}_{\overline{\pmb{x}}}$ the corresponding levels produced by the algorithm. Let $\pmb{c}^*$ be the optimal solution of problem  \textsc{AREC}. Assume that $F(\tilde{\pmb{c}}) < F(\pmb{c}^*)$, i.e. $\tilde{\pmb{c}}$ is not an optimal solution. For a small $\epsilon>0$, we consider solution $\hat{\pmb{c}} := (1-\epsilon) \tilde{\pmb{c}} + \epsilon \pmb{c}^*$. Since $F$ is concave (Lemma~\ref{lem_f_concave}), it must hold that $F(\tilde{\pmb{c}}) < F(\hat{\pmb{c}})$. Due to Lemma~\ref{lem_transform_to_compatible}, we can transform solution $\hat{\pmb{c}}$ to a solution $\pmb{c}'$ which is $(L_{\pmb{x}}',L_{\overline{\pmb{x}}}')-$compatible and has no smaller objective value, i.e. $F(\hat{\pmb{c}}) \leq F(\pmb{c}')$. We consider three different cases: \\

\noindent
\textbf{Case~1:} $L_{\pmb{x}}' = \tilde{L}_{\pmb{x}}$ \\
We can assume without loss of generality that the complete budget $\Gamma$ is spent to increase $\underline{\pmb{c}}$ 
to $\tilde{\pmb{c}}$ respectively $\pmb{c}'$. Hence, we are able to conclude that $L_{\overline{\pmb{x}}}' = \tilde{L}_{\overline{\pmb{x}}}$. In consequence, solution $\pmb{c}'$ and $\tilde{\pmb{c}}$ are identical, contradicting that $F(\tilde{\pmb{c}}) < F(\pmb{c}')$.\\

\noindent
\textbf{Case~2:} $L_{\pmb{x}}' < \tilde{L}_{\pmb{x}}$ \\
With the same reasoning as in Case~1, we conclude that $L_{\overline{\pmb{x}}}' > \tilde{L}_{\overline{\pmb{x}}}$. To arrive at a contradiction we have to take into account the last step of the algorithm, where $L_{\pmb{x}}$ was increased to $\tilde{L}_{\pmb{x}}$ (note that if $L_{\pmb{x}}$ was never increased during the algorithm, it must be at its initial level and, hence, $L_{\pmb{x}}' < \tilde{L}_{\pmb{x}}$ cannot be true). We denote by $\pmb{c}^-$ the solution of the algorithm before this step. The corresponding levels of $\pmb{c}^-$ are denoted by $L_{\pmb{x}}^-$ and $L_{\overline{\pmb{x}}}^-$. We can assume that $L_{\pmb{x}}'$ is arbitrary close to $\tilde{L}_{\pmb{x}}$, since the $\epsilon$ that was used in the definition of $\hat{\pmb{c}}$ can be arbitrary small. So we get that $L_{\pmb{x}}^- < L_{\pmb{x}}' < \tilde{L}_{\pmb{x}}$ and $L_{\overline{\pmb{x}}}^- \leq \tilde{L}_{\overline{\pmb{x}}} < L_{\overline{\pmb{x}}}'$. Two different cases were possible for this step of the algorithm. \\

\noindent
\textbf{Case~2a:} $L_{\overline{\pmb{x}}}$ stayed constant during this step. \\
Note that solution $\pmb{c}'$ and solution $\tilde{\pmb{c}}$ can both be created from $\pmb{c}(L_{\pmb{x}}',\tilde{L}_{\overline{\pmb{x}}})$ by increasing $L_{\pmb{x}}$ or $L_{\overline{\pmb{x}}}$. Denote by $\Gamma_{\pmb{x}}$ the budget which is necessary to increase $L_{\pmb{x}}'$ to $\tilde{L}_{\pmb{x}}$ and by $\Gamma_{\overline{\pmb{x}}}$ the budget which is necessary to increase $\tilde{L}_{\pmb{y}}$ to $L_{\pmb{y}}'$. The objective values of both solutions are given by
\[F(L_{\pmb{x}}',L_{\overline{\pmb{x}}}') = F(L_{\pmb{x}}',\tilde{L}_{\overline{\pmb{x}}}) + \Gamma_{\overline{\pmb{x}}} R_{\overline{\pmb{x}}}(L_{\pmb{x}}',\tilde{L}_{\overline{\pmb{x}}}),\]
\[F(\tilde{L}_{\pmb{x}},\tilde{L}_{\overline{\pmb{x}}}) = F(L_{\pmb{x}}',\tilde{L}_{\overline{\pmb{x}}}) + \Gamma_{\pmb{x}} R_{\pmb{x}}(L_{\pmb{x}}',\tilde{L}_{\overline{\pmb{x}}}).\]
Observe that we must have $\Gamma_{\pmb{x}} = \Gamma_{\overline{\pmb{x}}}$ since we can assume without loss of generality that the complete budget is spent to create solution $\tilde{\pmb{c}}$ and solution $\pmb{c}'$. The following estimations show that $R_{\pmb{x}}(L_{\pmb{x}}',\tilde{L}_{\overline{\pmb{x}}}) \geq R_{\overline{\pmb{x}}}(L_{\pmb{x}}',\tilde{L}_{\overline{\pmb{x}}})$, which leads to the desired contradiction.

\begin{align*}
R_{\pmb{x}}(L_{\pmb{x}}',\tilde{L}_{\overline{\pmb{x}}}) &\geq R_{\pmb{x}}(L_{\pmb{x}}',L_{\overline{\pmb{x}}}^-) &&\text{Lemma~\ref{lem_inc_dec_ratio} } (iv) \\
&= R_{\pmb{x}}(L_{\pmb{x}}^-,L_{\overline{\pmb{x}}}^-) &&\text{Ratios are constant during one step of the algorithm}  \\
&\geq R_{\overline{\pmb{x}}}(L_{\pmb{x}}^-,L_{\overline{\pmb{x}}}^-) &&\text{Choice of the algorithm}  \\
&= R_{\overline{\pmb{x}}}(L_{\pmb{x}}',L_{\overline{\pmb{x}}}^-) &&\text{Ratios are constant during one step of the algorithm}  \\
&\geq R_{\overline{\pmb{x}}}(L_{\pmb{x}}',\tilde{L}_{\overline{\pmb{x}}}) &&\text{Lemma~\ref{lem_inc_dec_ratio} } (iii)
\end{align*}

\noindent
\textbf{Case~2b:} $L_{\overline{\pmb{x}}}$ was increased during this step. \\
In this case, we must have that $L_{\pmb{x}}^- = L_{\overline{\pmb{x}}}^-$. Instead of performing a full update step from $L_{\pmb{x}}^-$ to $\tilde{L}_{\pmb{x}}$, we perform only a partial step and increase $L_{\pmb{x}}^-$ and $L_{\overline{\pmb{x}}}^-$ to $L_{\pmb{x}}'$. After this partial step we arrive at  solution $\pmb{c}(L_{\pmb{x}}',L_{\pmb{x}}')$. Note that $\tilde{\pmb{c}}$ and $\pmb{c}'$ can both be created from $\pmb{c}(L_{\pmb{x}}',L_{\pmb{x}}')$ by increasing $L_{\pmb{x}}$ or $L_{\overline{\pmb{x}}}$. Denote by $\Gamma_{\pmb{x}}$ the budget, which is necessary to increase $L_{\pmb{x}}$ from $L_{\pmb{x}}'$ to $\tilde{L}_{\pmb{x}}$ and by $\Gamma_{\overline{\pmb{x}}}$ the budget which is necessary to increase $L_{\overline{\pmb{x}}}$ from $L_{\pmb{x}}'$ to $\tilde{L}_{\pmb{x}}$. After the last step of the algorithm where $L_{\pmb{x}}$ was increased to $L_{\overline{\pmb{x}}}$, the algorithm performs a sequence of $k$ updating moves increasing $L_{\overline{\pmb{x}}}$, denote by $\Gamma_{\overline{\pmb{x}}}^j$ the budget which is spent in the $j$th updating move and by $L_{\overline{\pmb{x}}}^j$ the level $L_{\overline{\pmb{x}}}$ before the $j$th updating move. Finally, denote by $\Gamma_{\overline{\pmb{x}}}'$ the budget which is necessary to update $L_{\overline{\pmb{x}}}$ from $\tilde{L}_{\overline{\pmb{x}}}$ to $L_{\pmb{y}}'$. This allows us to relate the objective values of $\tilde{\pmb{c}}$ and $\pmb{c}'$ to the objective value of $\pmb{c}(L_{\pmb{x}}',L_{\pmb{x}}')$.

\begin{align*}
F(L_{\pmb{x}}',L_{\overline{\pmb{x}}}') &= F(L_{\pmb{x}}',L_{\pmb{x}}') + \Gamma_{\overline{\pmb{x}}} R_{\overline{\pmb{x}}}(L_{\pmb{x}}',L_{\pmb{x}}') + \sum_{j=1}^k \Gamma_{\overline{\pmb{x}}}^j R_{\overline{\pmb{x}}}(L_{\pmb{x}}',L_{\overline{\pmb{x}}}^j) + \Gamma_{\overline{\pmb{x}}}' R_{\overline{\pmb{x}}}(L_{\pmb{x}}',\tilde{L}_{\overline{\pmb{x}}})  \\
F(\tilde{L}_{\pmb{x}},\tilde{L}_{\overline{\pmb{x}}}) &= F(L_{\pmb{x}}',L_{\pmb{x}}') + (\Gamma_{\pmb{x}} + \Gamma_{\overline{\pmb{x}}}) R(L_{\pmb{x}}',L_{\pmb{x}}')  + \sum_{j=1}^k \Gamma_{\overline{\pmb{x}}}^j R_{\overline{\pmb{x}}}(\tilde{L}_{\pmb{x}},L_{\overline{\pmb{x}}}^j).
\end{align*}

\noindent
Consider the following estimations for the different efficiency ratios.

\begin{align*}
R(L_{\pmb{x}}',L_{\pmb{x}}') &= R(L_{\pmb{x}}^-,L_{\pmb{x}}^-) &&\text{Ratios are constant during one step of the algorithm}  \\
&\geq R_{\overline{\pmb{x}}}(L_{\pmb{x}}^-,L_{\pmb{x}}^-)&&\text{Choice of the algorithm}  \\
&= R_{\overline{\pmb{x}}}(L_{\pmb{x}}',L_{\pmb{x}}') &&\text{Ratios are constant during one step of the algorithm}  \\
&\geq R_{\overline{\pmb{x}}}(L_{\pmb{x}}',\tilde{L}_{\overline{\pmb{x}}})  &&\text{Lemma~\ref{lem_inc_dec_ratio} } (iii) \\
R_{\overline{\pmb{x}}}(\tilde{L}_{\pmb{x}},L_{\overline{\pmb{x}}}^j) &\geq R_{\overline{\pmb{x}}}(L_{\pmb{x}}',L_{\overline{\pmb{x}}}^j)  &&\text{Lemma~\ref{lem_inc_dec_ratio} } (ii)
\end{align*}

Using these estimations it is straightforward to show that  $F(\tilde{L}_{\pmb{x}},\tilde{L}_{\overline{\pmb{x}}}) \geq F(L_{\pmb{x}}',L_{\overline{\pmb{x}}}')$, which leads to the desired contradiction. \\

\noindent
\textbf{Case~3:} $L_{\pmb{x}}' > \tilde{L}_{\pmb{x}}$ \\
This case is completely analogous to Case~2, just exchange $\pmb{x}$ and $\overline{\pmb{x}}$.\\

All cases lead to contradictions, which proves that the assumption $F(\tilde{\pmb{c}}) < F(\pmb{c}^*)$ is wrong. Hence, the solution provided by the algorithm is indeed optimal. 
\end{proof}

Lemma~\ref{lem_sol_is_opt} completes the analysis of the proposed $O(n\log (n))$ algorithm. We summarize the findings of this section in the following theorem.

\begin{theorem}
Problem  \textsc{AREC}  can be solved in $O(n\log (n))$.
\end{theorem}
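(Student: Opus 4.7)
The plan is to build on the greedy level-raising algorithm already developed in this appendix and show that each of its $O(n)$ iterations admits an $O(\log n)$-time implementation, yielding the claimed $O(n\log n)$ bound. Correctness has been settled by Lemma~\ref{lem_sol_is_opt}. The iteration bound follows from the observation, already used in the analysis, that the efficiency ratios $R_{\pmb{x}}$, $R_{\overline{\pmb{x}}}$, $R$ can change only at ``events'' of three types: (i)~one of the levels $L_{\pmb{x}},L_{\overline{\pmb{x}}}$ crosses a threshold in $\mathcal{L}=\{\cl_i,\cu_i:i\in[n]\}$, (ii)~the two levels meet, or (iii)~the composition of the selected set changes through a single swap. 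Each such event happens at most a constant number of times per item, so their total number is $O(n)$.

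The preprocessing stage sorts $\mathcal{L}$ in $O(n\log n)$ time. For each side $S\in\{X_{\pmb{x}},\overline{X}_{\pmb{x}}\}$ I maintain two min-heaps: $H^{\mathrm{act}}_S$ keyed on $\cl_i$ over currently inactive items, whose top gives the next activation level, and $H^{\mathrm{cap}}_S$ keyed on $\cu_i$ over currently active items, whose top gives the next level at which an active item becomes capped. A second pair of heaps $H^{\mathrm{sel}}_S$ (max-heap on ``currently selected from $S$'') and $H^{\overline{\mathrm{sel}}}_S$ (min-heap on ``currently unselected in $S$'') tracks the $(p-k)$-cheapest boundary inside $S$; a symmetric pair does the same for the global top-$p$ selection used to define $R$. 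The counters $|A_S|$ and $|I_S\cap A_S|$, together with the numerator and denominator of $R$, are stored explicitly and updated in $O(1)$ alongside each heap operation, so the three ratios are available in $O(1)$ at the start of every iteration.

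Each iteration first decides, using the cached ratios and the rule justified by Lemma~\ref{lem_inc_dec_ratio}, whether to raise $L_{\pmb{x}}$, to raise $L_{\overline{\pmb{x}}}$, or (when $L_{\pmb{x}}=L_{\overline{\pmb{x}}}$) to raise both in parallel. It then reads a constant number of heap tops to compute the smallest budget increment $\Delta$ until the next event of any of the three kinds listed above or until $\Gamma$ is exhausted, updates the chosen level(s) by $\Delta$, and processes the event by moving one item between the relevant pair of heaps and adjusting the associated counters. Each event therefore costs $O(\log n)$ and, by the $O(n)$ event bound, the total running time is $O(n\log n)$.

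The main obstacle is the correct maintenance of the selection structure $I_S$ as levels evolve, because an active item at cost $L_S$ can be overtaken in the top-$(p-k)$ ranking by an item already capped at its upper bound, and the converse can happen as $L_S$ crosses a $\cu_i$ and a formerly selected active item becomes capped. The paired heaps $H^{\mathrm{sel}}_S,H^{\overline{\mathrm{sel}}}_S$ are exactly the right structure: a swap event is detected precisely when the maximum of the former equals the minimum of the latter, and a single pair of pop/push operations restores the invariant in $O(\log n)$. The analogous bookkeeping for the mixed top-$p$ selection used in the global ratio $R$ is handled in the same way, since swaps there are again triggered only at interesting levels. Combining the $O(n\log n)$ preprocessing, the $O(n)$ bound on iterations, and the $O(\log n)$ cost per iteration completes the proof.
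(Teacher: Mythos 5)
Your proposal is correct and follows essentially the same route as the paper: the $O(n)$ bound on iterations via events at interesting levels or level coincidences, correctness delegated to Lemma~\ref{lem_sol_is_opt}, and $O(\log n)$ amortized work per step using heap-based bookkeeping. The only difference is cosmetic: the paper maintains the active sets with two pointers over the sorted cost order plus a single min-heap on the upper bounds of active items (and pointers for the selected sets), whereas you use additional heap pairs for the activation thresholds and the selection boundary; both yield the same $O(n\log n)$ total.
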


Problem     \textsc{A2ST}    can be seen as a simpler version of problem  \textsc{AREC}. All items which were already selected in the first stage are removed from the instance. The remaining problem is a special instance of problem  \textsc{AREC}  (where $\overline{X}_{\pmb{x}}$ is empty).

\begin{corollary}
Problem \textsc{A2ST}  can be solved in $O(n\log(n))$.
\end{corollary}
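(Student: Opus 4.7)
The plan is to reduce \textsc{A2ST} to a special instance of \textsc{AREC} and invoke the preceding theorem. Given an \textsc{A2ST} instance with first-stage solution $\pmb{x}\in\Phi_1$ of cardinality $p_1=|X_{\pmb{x}}|$, I would first discard the $p_1$ items of $X_{\pmb{x}}$ from the ground set---these items must appear in every feasible final solution, so their second-stage cost does not affect the adversary's optimum in $\Phi_{\pmb{x}}$---obtaining a reduced instance on the $n-p_1$ items of $\overline{X}_{\pmb{x}}$ from which $\tilde p=p-p_1$ items still have to be chosen in the second stage.

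This reduced problem is an \textsc{AREC} instance on $n-p_1$ items, with cardinality parameter $\tilde p$ and recovery parameter $k=\tilde p$. Because every item of the second-stage solution is allowed to be exchanged, the recovery set $\Phi^{k}_{\pmb{x}}$ coincides with the full family of $\tilde p$-cardinality solutions, and hence the choice of the first-stage solution in the \textsc{AREC} instance is irrelevant; in particular I can take it to be empty, in which case the whole ground set plays the role of $\overline{X}_{\pmb{x}}$ in the algorithm of the preceding theorem. Applying that $O(m\log m)$ algorithm to the reduced instance of size $m=n-p_1\le n$ gives a running time of $O((n-p_1)\log(n-p_1))=O(n\log n)$. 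The only point to verify---and not the hard part of anything---is that the algorithm behaves correctly when $X_{\pmb{x}}=\emptyset$: the quantities $L_{\pmb{x}}$, $R_{\pmb{x}}$ and $R$ are then vacuous or trivially dominated by $R_{\overline{\pmb{x}}}$, so the procedure degenerates to a one-level search that only updates $L_{\overline{\pmb{x}}}$, with the same amortized $O(\log n)$ cost per step and thus the claimed overall bound.
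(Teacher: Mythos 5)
Your reduction is exactly the one the paper uses: strip the first-stage items, set $k=\tilde p$ so that the recovery constraint becomes vacuous, and run the $O(n\log n)$ \textsc{AREC} algorithm on the reduced instance. The only (immaterial) difference is a labeling convention --- the paper views the resulting special case as one where $\overline{X}_{\pmb{x}}$ is empty, whereas you place all remaining items in $\overline{X}_{\pmb{x}}$; either way the procedure degenerates to the same one-level search, so your argument is correct.
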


We only sketch the idea how to efficiently implement the algorithm, since the exact details are cumbersome. For an efficient implementation of the algorithm we renounce an explicit representation of  $A_{\pmb{x}}, A_{\overline{\pmb{x}}},I_{\pmb{x}},$ and $I_{\overline{\pmb{x}}}$. Instead we store these sets only implicitly and compute the values which are important to compute the different efficiency ratios. To represent the set $A_{\pmb{x}}$ ($A_{\overline{\pmb{x}}}$ is analogous) we store two pointers representing the leftmost and rightmost element of the set (sorted with respect to the actual cost). If the level $L_{\pmb{x}}$ is raised such that one other item becomes active we increase the right pointer by one. If the cost of an active item is raised to its upper bound the item becomes inactive and we increase the left pointer by one. Since we have only an implicit representation of $A_{\pmb{x}}$, it is not trivial to efficiently check whether the cost of an item reaches its upper bound. To handle this problem we create a \emph{min-heap} which contains the upper bound cost of all active items. Note that each item becomes at most once active and at most once inactive. Inserting and removing an item into the heap requires $O(\log(n))$
(see, e.g.,~\cite{CO90}). Hence, building and maintaining the heap requires overall $O(n \log(n))$ time.

The set of selected items $I_{\pmb{x}}$ and $I_{\overline{\pmb{x}}}$ is represented by two pointers, which indicate at which positions the levels $L_{\pmb{x}}$ and $L_{\overline{\pmb{x}}}$ lie with respect to the actual cost of the items in $X_{\overline{\pmb{x}}}$ and $X_{\pmb{x}}$. Further we keep track of all items which became inactive since they have reached its cost upper bound. Note that these items are certainly part of the selected items. This information allows us to compute $|I_{\pmb{x}}(L_{\pmb{x}},L_{\overline{\pmb{x}}}) \cap A_{\pmb{x}}|$ and $|I_{\overline{\pmb{x}}}(L_{\pmb{x}},L_{\overline{\pmb{x}}}) \cap A_{\overline{\pmb{x}}}(L_{\pmb{x}},L_{\overline{\pmb{x}}})|$ in constant time in each step of the algorithm. Updating the pointers requires overall $O(n)$ time.

For choosing the correct update step length for $L_{\pmb{x}}$ ($L_{\overline{\pmb{x}}}$ is analogous), we take the minimum of four values. First, the cost of the item of $X_{\pmb{x}}$, which is the next item that becomes active, if $L_{\pmb{x}}$ is increased. Second, the minimum of the upper bound cost of all active items of $A_{\pmb{x}}$ (provided by the min-heap). Third, the level $L_{\overline{\pmb{x}}}$. Fourth, the cost of the cheapest item of $X_{\overline{\pmb{x}}}$ which has cost strictly higher than $L_{\pmb{x}}$. Note that we can obtain each value in constant time. Using these update step sizes, it is ensured that after each step all values required to compute the efficiency ratios can be updated efficiently.

\end{document}